\theoremstyle{plain}
\newtheorem{theorem}{Theorem}[section]
\newtheorem{lemma}{Lemma}[section]
\newcommand{\edge}[1]{\ar@{-}[#1]}
\theoremstyle{definition}
\newtheorem{definition}{Definition}[section]
\newtheorem{example}{Example}[section]
\theoremstyle{remark}
\newtheorem{remark}{Remark}[section]
\def\Spec{\operatorname{Spec}}
\numberwithin{equation}{section} \numberwithin{equation}{section}
\theoremstyle{plain}
\begin{document}
\title[ Zero-divisor graphs with seven vertices]{Zero-divisor graph with seven vertices }
\author{Xinyun Zhu}
\address{Department of Mathematics\\  University of Texas of Permian Basin\\ Odessa, TX 79762}
\email{zhu\_x@utpb.edu} \date{\today}
\begin{abstract} 

 Inspired by the work in \cite{sauer} regarding the classification of all the zero-divisor graphs with six vertices, we  obtain all the zero-divisor graphs with seven vertices. Hence we classify all the zero-divisor commutative semigroups with 8 elements. We also obtain all the connected graphs with seven vertices which satisfies the necessary condition $\star$ of zero-divisor graphs given in \cite{fl} but are not the zero-divisor graphs. 
\end{abstract}
\maketitle
\section{Introduction}

Let $S$ be a commutative semigroup with zero.  The {\it zero-divisor graph}  of $S,$ denoted $\Gamma(S),$  is the graph with vertices corresponding to the nonzero zero-divisors of $S,$ and distinct zero-divisors $x$ and $y$ are adjacent if and only if $xy=0.$ A semigroup is called a {\it zero-divisor semigroup} if it consists solely of zero-divisors.

Given a connected graph $G,$ let $V(G)$ denote the vertices set of $G$ and $E(G)$ the edges set of $G$. 
 For any two distinct vertices $a$ and $b$, define $ab=0$ if $a$ and $b$ are adjacent  otherwise define $ab=ba\in V(G)$.  Using this way, we can get a family of multiplication tables corresponding to $G$.  If there exists  a multiplication table which defines a semigroup, denoted $S(G)$, then $G$ is a zero-divisor graph.

In \cite{leck}, all the non-isomorphic connected graphs with six vertices are given. Based on \cite{leck}, Sauer classified all the zero-divisor graphs with six vertices in \cite{sauer}.  

The purpose of this paper is to extend Sauer's work \cite{sauer} to zero-divisor graphs with seven vertices.

This paper is organized as follows. In section~\ref{section1}, we compare the necessary conditions for the zero-divisor graphs in \cite{fl} and conclude that condition (4) in Theorem~\ref{condition} is the most important condition. In section~\ref{section2}, we give a method of filling the multiplication table of zero-divisor graphs and find some properties of zero-divisor graphs with seven vertices. In section~\ref{section3}, we get all the  zero-divisor graphs with seven vertices in \cite{RW}. Hence we classify all the zero-divisor commutative semigroups with 8 elements. In section~\ref{section4}, we get all the  non-zero divisor graphs in \cite{RW} which satisfy the necessary condition (4) in Theorem~\ref{condition}.
\section{Relations among the  Necessary conditions regarding the zero-divisor graphs given in Theorem 1 of \cite{fl}.}\label{section1}
\begin{definition}\label{neighborhood} Given a connected graph $G$. Let $a$ be a vertex of $G.$ We define $N(a)$ be a set of all vertices which is adjacent to $a$ and $\overline{N(a)}=N(a)\cup \{a\}$.
\end{definition}
In \cite[Theorem 1]{fl}, the following necessary conditions for a zero-divisor graph were given. 
\begin{theorem}\cite[Theorem 1]{fl}\label{condition} If $G$ is the graph of a semigroup then $G$ satisfies all of the following conditions. 
\begin{enumerate}
\item
$G$ is connected.
\item
Any two vertices of $G$ are connected by a path with $\le$ 3 edges.
\item
If $G$ contains a cycle then the core of $G$ is a union of quadrilaterals and triangles, and any vertex not in the core of $G$ is an end.
\item
For any pair $x,\, y$ of nonadjacent vertices of $G$, there is a vertex $z$ with $N(x)\cup N(y)\subset \overline{N(z)}.$
\end{enumerate}
\end{theorem}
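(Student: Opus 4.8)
The plan is to fix a commutative semigroup $S$ with zero for which $\Gamma(S)=G$ and to verify the four conditions by translating each graph-theoretic assertion into a statement about products in $S$. Throughout I use that two distinct vertices $x,y$ are adjacent exactly when $xy=0$, and that every vertex is a nonzero zero-divisor. I would prove (2) first, since it immediately yields (1): given distinct nonadjacent $x,y$ (so $xy\neq 0$), choose nonzero $a,b$ with $xa=0$ and $yb=0$ and form $ab$. If $ab\neq 0$, then $x(ab)=(xa)b=0$ and $y(ab)=(yb)a=0$, so $ab$ is annihilated by both $x$ and $y$; moreover $ab=x$ would give $xy=0$ and $ab=y$ likewise, so $ab$ is a vertex distinct from $x,y$ and $x-ab-y$ is a path of length $2$. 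If instead $ab=0$, then $a,b$ are adjacent and $x-a-b-y$ is a walk of length $3$ whose possible degeneracies (such as $a=x$ or $b=y$) only shorten it. In every case one obtains a path with at most $3$ edges, which also forces $G$ to be connected.

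Next I would dispatch (4), which is the cleanest. Given nonadjacent $x,y$, set $z=xy\neq 0$; choosing a nonzero annihilator $a$ of $x$ gives $za=y(xa)=0$, so $z$ is a nonzero zero-divisor and hence a vertex. For $w\in N(x)$ we have $wz=(wx)y=0$, and for $w\in N(y)$ we have $wz=(wy)x=0$; in both cases $w\in\overline{N(z)}$. Therefore $N(x)\cup N(y)\subseteq\overline{N(z)}$, which is exactly (4).

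The substantial case is (3); assume $G$ contains a cycle. The first task is to show that every edge lying on a cycle already lies on a triangle or a quadrilateral, i.e.\ that there is no chordless cycle of length $\ge 5$. I would take a shortest cycle $v_1-v_2-\cdots-v_n-v_1$ through a fixed edge $e=v_1v_2$; minimality forces it to be chordless, since any chord would split off a shorter cycle through $e$. Supposing $n\ge 5$, consider the element $w=v_1v_3$, which is nonzero because $v_1,v_3$ are nonadjacent. Then $wv_2=(v_1v_2)v_3=0$, $wv_4=v_1(v_3v_4)=0$, and $wv_n=v_3(v_1v_n)=0$, so $w$ annihilates the pairwise distinct vertices $v_2,v_4,v_n$. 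If $w$ is itself a cycle vertex, it is thereby adjacent to at least two non-consecutive vertices among $v_2,v_4,v_n$, a chord; and if $w$ lies off the cycle, then $v_1-v_2-w-v_n-v_1$ is a $4$-cycle through $e$, contradicting minimality since $n\ge 5$. Either way we reach a contradiction, so the core is a union of triangles and quadrilaterals. It remains to prove that a vertex $v$ not in the core is an end. Here I would suppose $\deg v\ge 2$ with neighbors $a,b$; if $ab=0$ then $v,a,b$ span a triangle and $v$ lies in the core, so $ab\neq 0$, and I would then feed the pair $a,b$ (with the candidate $z=ab$) into condition (4) to produce a second path between $a$ and $b$ avoiding $v$, placing $v$ on a cycle.

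The main obstacle I expect is precisely this last step, the ``ends'' clause of (3). The length bound comes out cleanly from the single product $v_1v_3$, but ruling out a separating vertex of degree $\ge 2$ lying on no cycle is delicate: in the \emph{absence} of any cycle such a vertex genuinely occurs (the path $a-v-b$ arising when $v=ab$ with $a^2=b^2=0$), so the argument must actually invoke the standing hypothesis that $G$ contains a cycle. Converting condition (4) into an honest cycle through $v$—handling in particular the case $z=ab=v$—is where the real care is required.
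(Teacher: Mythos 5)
The paper itself offers no proof of this statement: it is quoted verbatim from \cite{fl}, so the only thing to measure your argument against is the source. On its own terms, your treatment of (1), (2) and (4) is correct and is the standard one (the $\le 3$-edge path through annihilators $a,b$ and the product $ab$; the witness $z=xy$ for condition (4) via $w(xy)=(wx)y=(wy)x$). Your argument for the first half of (3) is also sound: for a shortest, hence chordless, cycle $v_1v_2\cdots v_nv_1$ through a fixed edge with $n\ge 5$, the nonzero element $w=v_1v_3$ annihilates $v_2$, $v_4$ and $v_n$, and each placement of $w$ forces either a chord or a quadrilateral through $v_1v_2$.

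The genuine gap is the one you flag yourself, and the repair you sketch does not work. For $v$ outside the core with nonadjacent neighbors $a,b$, applying condition (4) to the pair $a,b$ with witness $z=ab$ yields only $N(a)\cup N(b)\subseteq\overline{N(z)}$, which constrains the \emph{neighbors} of $a$ and $b$, not $a$ and $b$ themselves: $za=a^2b$ and $zb=ab^2$ need not vanish, so no path $a-z-b$ and hence no cycle through $v$ is produced. What you do get for free is $zv=a(bv)=0$, i.e.\ $z\in\overline{N(v)}$, but that merely adds another neighbor of $v$ (or returns $v$ itself when $z=ab=v$). The zero-divisor semigroup of $\mathbb{Z}_8$, whose graph is the path $2-4-6$ with $2\cdot 6=4$, shows that the local configuration $a-v-b$, $ab=v$, $\deg v=2$, $v$ on no cycle, actually occurs; consequently no argument confined to $v$ and its neighbors can succeed, and a correct proof must transport information from the cycle that exists elsewhere in $G$ (for instance by observing that $v$ is a cut vertex and working with the component of $G-v$ containing the core). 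As written, the ends clause of (3) — the only non-routine part of the theorem — remains unproved.
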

In this section, we find the relations among those necessary conditions in Theorem~\ref{condition}. 

\begin{lemma} $N(a)\cup N(b)\subset \overline{N(c)}$ implies $d(a,b)\le 3.$
\end{lemma}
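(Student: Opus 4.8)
The plan is to argue by contradiction from a shortest path. Fix a geodesic $a = w_0 - w_1 - \cdots - w_k = b$ realizing $k = d(a,b)$, and suppose toward a contradiction that $k \ge 4$. First I would dispose of the trivial situation: if $a$ and $b$ are adjacent then $d(a,b) = 1$, so I may assume $a$ and $b$ are nonadjacent, and since $G$ is connected with more than one vertex neither is isolated, so the geodesic genuinely contains the interior vertices $w_1$ and $w_{k-1}$.

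The key observation is that $w_1 \in N(a)$ and $w_{k-1} \in N(b)$, so by the hypothesis $N(a) \cup N(b) \subseteq \overline{N(c)}$ both of these vertices lie in $\overline{N(c)} = N(c) \cup \{c\}$. I would then translate each membership statement into a short connection to $c$: each of $w_1, w_{k-1}$ is either equal to $c$ or adjacent to $c$. This lets me splice $c$ in as a hub and produce the walk $a - w_1 - c - w_{k-1} - b$.

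Next I would run the case analysis that extracts a path shorter than $k$. If $w_1 = c$, then $w_{k-1} \ne c$ (the vertices of a geodesic are distinct and $1 \ne k-1$ for $k \ge 4$), so $w_{k-1} \in N(c) = N(w_1)$; hence $a - w_1 - w_{k-1} - b$ is a walk of length $3 < k$, contradicting minimality, and the case $w_{k-1} = c$ is symmetric. If $w_1 = w_{k-1}$, then $a - w_1 - b$ has length $2$; and if $w_1, w_{k-1}$ are adjacent, then $a - w_1 - w_{k-1} - b$ has length $3$; either way we again contradict $k \ge 4$.

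The remaining case is what I expect to be the main obstacle: $w_1 \ne c \ne w_{k-1}$ with $w_1, w_{k-1}$ distinct and nonadjacent, so both are merely neighbors of $c$. Here the hub construction yields only the walk $a - w_1 - c - w_{k-1} - b$ of length $4$, which does not beat $k = 4$, so no contradiction is immediate; this is exactly the configuration in which $c$ sits as the midpoint of the geodesic. To close it I anticipate needing more than the bare containment: one should exploit the freedom to re-choose the neighbors $x \in N(a)$ and $y \in N(b)$ used to witness $c$, or invoke the core structure of condition (3) (that the core is a union of quadrilaterals and triangles), in order to force $w_1$ and $w_{k-1}$ either to coincide or to be adjacent. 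Verifying that this degenerate midpoint configuration cannot persist is the step on which I would spend the most effort.
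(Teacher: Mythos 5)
Your reduction to the ``midpoint'' configuration is correct, but the gap you flag at the end is genuine and cannot be closed with the tools you allow yourself: if the only hypothesis is the single containment $N(a)\cup N(b)\subset \overline{N(c)}$ for the one pair $(a,b)$, the statement is simply false. Take the path $a-x-c-y-b$ on five vertices: $N(a)\cup N(b)=\{x,y\}\subset N(c)$, yet $d(a,b)=4$. This is exactly your degenerate case ($w_1=x$, $w_{k-1}=y$, $c$ the midpoint of the geodesic), and neither of your proposed rescues works there: $N(a)$ and $N(b)$ are singletons, so there is no freedom to re-choose the witnesses, and condition (3) holds vacuously since the graph has no cycle. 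So the case on which you planned ``to spend the most effort'' is not a technical nuisance; it is a counterexample to the literal statement you set out to prove.

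What the paper actually proves (and what the lemma implicitly means, sitting as it does in the section comparing the necessary conditions of Theorem 1.1) is that condition $\star$, assumed for \emph{all} nonadjacent pairs, implies $d(a,b)\le 3$. The paper's treatment of your hard case invokes $\star$ a second time, on a new pair: with $d\in N(a)$ and $e\in N(b)$ (your $w_1,w_{k-1}$), either $d$ and $e$ are adjacent, giving the path $a-d-e-b$ of length $3$, or $\star$ applied to $(d,e)$ supplies a vertex $f$ with $N(d)\cup N(e)\subset\overline{N(f)}$; since $a\in N(d)$ and $b\in N(e)$, both $a$ and $b$ lie in $\overline{N(f)}$, which forces $d(a,b)\le 2$ (path $a-f-b$, or adjacency if $f\in\{a,b\}$). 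That second application of $\star$ is the missing idea; without strengthening your hypothesis to the global condition, no amount of work on the midpoint configuration will finish the proof.
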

\begin{proof}

If $N(a)\cup N(b)\subset \overline{N(c)}-N(c)$, then $c\in N(a)$ or $c\in N(b).$  Assume $c\in N(b)$ and $f\in N(a)$, then we have path $a-f-c-b$.  Hence $d(a,b)\le 3.$
  
  If $N(a)\cup N(b)\subset N(c)$ and $e\in N(a)\cap N(b)$, then we have path $a-e-b.$  Hence $d(a,b)\le 3.$
  
  Suppose $N(a)\cup N(b)\subset N(c)$ and $N(a)\cap N(b)=\emptyset$.  If $d\in N(a)$ and $e\in N(b),$ then $\{a,c\}\subset N(d)$ and $\{b,c\}\subset N(e).$  Hence there exists  a vertex $f$ such that 
  \[\{a,b,c\}\subset N(d)\cup N(e)\subset \overline{N(f)}\] 
  Hence we have path $a-f-b$ and $d(a,b)\le 3.$ 
  \end{proof}   
\begin{lemma} $d(a,b)\le 3$  does not imply $N(a)\cup N(b)\subset \overline{N(c)}$.
\end{lemma}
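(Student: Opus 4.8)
The statement is a non-implication, so the plan is to exhibit a single connected graph $G$ together with two vertices $a,b$ satisfying $d(a,b)\le 3$ for which no vertex $c$ fulfils $N(a)\cup N(b)\subset\overline{N(c)}$. The guiding idea is to attach to each of $a$ and $b$ a pendant vertex --- a vertex of degree one --- and to place $a$ and $b$ close together. Pendant vertices are the crucial device: if $x$ is adjacent only to $a$, then any $c$ with $x\in\overline{N(c)}$ must satisfy $c=x$ or $c=a$, because the closed neighborhood of $c$ can contain $x$ only if $c$ is $x$ itself or one of its (unique) neighbors. This collapses the otherwise open-ended search over all candidate vertices $c$ into a comparison of two tiny sets.

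Concretely, I would take $G$ to be the path on five vertices $v_1-v_2-v_3-v_4-v_5$ and set $a=v_2$, $b=v_4$. Then $a$ and $b$ are joined through $v_3$, so $d(a,b)=2\le 3$, verifying the hypothesis. Here $N(a)=\{v_1,v_3\}$ and $N(b)=\{v_3,v_5\}$, so $N(a)\cup N(b)=\{v_1,v_3,v_5\}$, and in particular the pendant vertices $v_1$ and $v_5$ both lie in this union.

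The heart of the argument is then the disjointness check. Since $v_1$ is pendant at $a=v_2$, any $c$ with $v_1\in\overline{N(c)}$ forces $c\in\{v_1,v_2\}$; since $v_5$ is pendant at $b=v_4$, any $c$ with $v_5\in\overline{N(c)}$ forces $c\in\{v_4,v_5\}$. These two candidate sets are disjoint, so no single $c$ can place both $v_1$ and $v_5$ in its closed neighborhood, and a fortiori no $c$ satisfies $N(a)\cup N(b)\subset\overline{N(c)}$. This contradicts the conclusion while the hypothesis $d(a,b)\le 3$ holds, establishing the non-implication.

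The only place demanding care is the completeness of the verification: one must be sure that every possible $c$ --- not merely the obvious candidates $a,b$, or their common neighbor $v_3$ --- fails. The pendant-vertex reduction handles exactly this, turning an exhaustive case analysis into the one-line observation that $\{v_1,v_2\}\cap\{v_4,v_5\}=\emptyset$. I expect no genuine obstacle beyond stating this reduction cleanly; if a counterexample with larger vertex degrees or with $d(a,b)=3$ (rather than $2$) is preferred for consistency with the seven-vertex setting of the paper, the same pendant-vertex mechanism applies verbatim after inserting extra vertices along or around the $a$--$b$ connecting path.
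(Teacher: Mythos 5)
Your argument is internally sound for the literal, pairwise reading of the statement: in the path $v_1-v_2-v_3-v_4-v_5$ the pair $a=v_2$, $b=v_4$ satisfies $d(a,b)=2\le 3$, and your pendant-vertex reduction correctly shows that no vertex $c$ has $N(a)\cup N(b)\subset\overline{N(c)}$. The gap is one of interpretation, and here it is substantive. This lemma sits in a sequence whose whole purpose is to compare conditions (1)--(4) of Theorem~\ref{condition} as properties of a graph; the section concludes that condition (4), the condition $\star$, is the strongest of them. Condition (2) is global: \emph{every} two vertices of $G$ are joined by a path with at most $3$ edges, i.e.\ $G$ has diameter at most $3$. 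So the lemma must exhibit a graph satisfying condition (2) in full while failing condition $\star$ for some pair. Your $P_5$ does not qualify: $d(v_1,v_5)=4$, so the graph itself violates condition (2), and hence the example says nothing about whether condition (2) implies condition (4).

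Moreover, the pendant-vertex mechanism cannot be repaired to meet the global hypothesis: if $x$ is a pendant at $a$ and $y$ is a pendant at $b$ with $a,b$ nonadjacent (which your disjointness argument requires), then every $x$--$y$ path passes through both $a$ and $b$, so $d(x,y)=d(a,b)+2\ge 4$ and the graph always has diameter at least $4$. This is exactly why the paper's counterexample avoids ends altogether: it is the $7$-vertex graph with $N(a)=\{b,e,g\}$, $N(b)=\{a,c\}$, $N(c)=\{b,d\}$, $N(d)=\{c,f\}$, $N(e)=\{a,f\}$, $N(f)=\{d,e,g\}$, $N(g)=\{a,f\}$, which has diameter $3$, while every closed neighborhood has at most $4$ elements and, for instance, the nonadjacent pair $b,f$ has $N(b)\cup N(f)=\{a,c,d,e,g\}$ of size $5$, hence contained in no closed neighborhood. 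To repair your proof you need a counterexample of this kind --- diameter at most $3$ with neighborhood unions too large to be dominated --- rather than any construction hanging ends from the two chosen vertices.
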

\begin{proof}
The connected graph $G$  with $V(G)=\{a,b, c,d, e,f,g\}$ and $E(G)$ defined by $N(a)=\{b, e,g\},$ $N(b)=\{a,c\},\, N(c)=\{b, d\}, N(d)=\{c,f\},N(e)=\{a,f\}, N(f)=\{d,e,g\}, N(g)=\{a,f\}$ is  a counter example. 
\end{proof}
\begin{lemma} $d(a,b)\le 3$ does not imply a union of quadrilaterals and triangles.
\end{lemma}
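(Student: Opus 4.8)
The plan is to establish this non-implication by exhibiting a single counterexample: a connected graph in which every pair of vertices is joined by a path of length at most $3$, yet whose core fails to decompose into quadrilaterals and triangles, so that condition (3) of Theorem~\ref{condition} breaks. Since the statement is negative, no general argument is needed — producing one explicit graph and checking the two required properties suffices. To stay consistent with the seven-vertex theme of the paper and with the style of the previous lemma, I would take the graph to be the $7$-cycle $C_7$, i.e.\ the graph $G$ with $V(G)=\{a,b,c,d,e,f,g\}$ and edges given by the cyclic neighborhoods $N(a)=\{b,g\}$, $N(b)=\{a,c\}$, $N(c)=\{b,d\}$, $N(d)=\{c,e\}$, $N(e)=\{d,f\}$, $N(f)=\{e,g\}$, $N(g)=\{f,a\}$.

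The verification then proceeds in three short steps. First, $G$ is clearly connected. Second, I would compute the diameter: since $G$ is a cycle on $7$ vertices, the distance between two vertices equals the shorter of the two arc lengths along the cycle, so the maximum distance is $\lfloor 7/2\rfloor = 3$. Hence $d(a,b)\le 3$ holds for every pair of vertices, and the hypothesis of the lemma is satisfied. Third, because $G$ is $2$-regular it has no end, so the core of $G$ is all of $G$, namely the $7$-cycle itself. A $7$-cycle has girth $7$, so it contains no triangle and no quadrilateral whatsoever; in particular its core cannot be written as a union of quadrilaterals and triangles. This is exactly the failure of condition (3), completing the counterexample.

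The argument has essentially no hard step: the only points requiring care are making the notion of \emph{core} precise (the subgraph that survives after iteratively deleting ends, which for a $2$-regular graph is the whole graph) and justifying that a $7$-cycle admits no decomposition into $3$- and $4$-cycles. The cleanest way to dispatch the latter is the girth observation above — no edge of $C_7$ lies on any triangle or quadrilateral — rather than attempting a combinatorial case analysis of possible gluings. If a minimal example were preferred, the same reasoning applies verbatim to the pentagon $C_5$, whose diameter is $2$ and whose core is a $5$-cycle; I would nevertheless present $C_7$ so that the counterexample lives on seven vertices, matching the scope of the paper.
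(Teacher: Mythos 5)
Your proposal is correct and follows essentially the same approach as the paper, which takes the hexagon $a-b-c-d-e-f-a$ (diameter $3$, girth $6$) as its counterexample; your $C_7$ (diameter $3$, girth $7$) works by the identical reasoning, with the verification of diameter and of the absence of triangles and quadrilaterals carried out just as you describe.
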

\begin{proof} The circuit $a-b-c-d-e-f-a$ gives a counter-example.
\end{proof}
\begin{lemma}a union of quadrilaterals and triangles does not imply $d(a,b)\le 3$.  
\end{lemma}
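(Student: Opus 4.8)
The plan is to prove the statement by exhibiting an explicit counterexample: a connected graph whose core is genuinely a union of quadrilaterals (so that condition (3) of Theorem~\ref{condition} is satisfied), yet which contains a pair of vertices at distance strictly greater than $3$ (so that the conclusion $d(a,b)\le 3$ fails). The guiding idea is that gluing cycles together along single vertices in a chain produces a graph of large diameter while keeping every vertex on a short cycle; the smallest interesting case is two quadrilaterals sharing exactly one vertex.

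Concretely, I would take $V(G)=\{a,b,c,d,e,f,g\}$ and form the two quadrilaterals $a-b-c-d-a$ and $d-e-f-g-d$, glued together at the common vertex $d$. The verification then splits into two independent checks. First, the core: each of $a,b,c,e,f,g$ has degree $2$ and $d$ has degree $4$, so $G$ has no end vertices and therefore coincides with its own core; since $G$ is literally the union of the two listed $4$-cycles, condition (3) holds. Second, the distance: I would run a breadth-first search from $b$, observing that $N(b)=\{a,c\}$ lies at distance $1$, that $d$ is the only vertex reached at distance $2$, that $e$ and $g$ appear at distance $3$, and that $f$ first appears at distance $4$. Hence $d(b,f)=4>3$, which is the desired failure.

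The one point that needs genuine care, rather than routine checking, is confirming that the distance is honestly $4$ and not accidentally shorter. Here the key structural observation is that $d$ is a cut vertex: deleting $d$ separates $\{a,b,c\}$from $\{e,f,g\}$, so every walk from $b$ to $f$ must pass through $d$. Since the shortest $b$–$d$ path has length $2$ (either $b-a-d$ or $b-c-d$) and the shortest $d$–$f$ path also has length $2$ (either $d-e-f$ or $d-g-f$), concatenation gives $d(b,f)=4$, and no shorter route exists. This cut-vertex argument is what I expect to be the main obstacle in the sense that it is the step one must not skip: it rules out the ``diagonal'' shortcuts that a careless reader might imagine and thereby certifies that the diameter really exceeds $3$ while the core-decomposition hypothesis remains intact.
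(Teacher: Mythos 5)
Your proposal is correct, and it follows the same overall strategy as the paper --- proof by explicit counterexample --- but with a genuinely different and noticeably simpler graph. The paper's counterexample is an $11$-vertex graph (vertices $a,\dots,k$) whose edge set decomposes into six triangles and two quadrilaterals, in which one checks $d(a,k)=4$; the verification that every edge lies on a triangle or quadrilateral, and that no shortcut exists, requires tracing through a fairly dense adjacency list. Your $7$-vertex graph --- two quadrilaterals $a\hbox{-}b\hbox{-}c\hbox{-}d\hbox{-}a$ and $d\hbox{-}e\hbox{-}f\hbox{-}g\hbox{-}d$ glued at the single vertex $d$ --- satisfies the hypothesis trivially (the graph has no ends, so it equals its core, and it is literally the union of the two listed $4$-cycles), and your cut-vertex argument gives a clean, airtight proof that $d(b,f)=2+2=4$: every $b$--$f$ path must pass through $d$, so no diagonal shortcut is possible. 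What your approach buys is economy and transparency; what the paper's larger example shows, incidentally, is that the failure persists even when the core mixes triangles with quadrilaterals and every vertex lies on several short cycles, but that extra generality is not needed for the lemma as stated. Both proofs are valid refutations of the implication.
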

\begin{proof} \[N(a)=\{b,c\},\, N(b)=\{a,c,d,e\},\, N(c)=\{a,b,f,g\},\, N(d)=\{b,e,h\},\]
\[N(e)=\{b,d,f,i\},\, N(f)=\{c,e,g,i\},\, N(g)=\{c,f,j\},\,N(h)=\{d,i,k\},\]  
\[ N(i)=\{e,f,h,j, k\},\, N(j)=\{g,i,k\},\, N(k)=\{h,i,j\}\]
gives a counter-example.
\end{proof}
\begin{lemma}$N(a)\cup N(b)\subset \overline{N(c)}$ implies a union of quadrilaterals and triangles.  
\end{lemma}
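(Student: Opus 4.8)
The plan is to recast the conclusion in an edge-local form and then run a shortest-cycle argument driven by condition~(4) of Theorem~\ref{condition}. First I would reduce the lemma to the claim that \emph{every edge of $G$ lying on some cycle also lies on a triangle or a quadrilateral}. This reduction is clean: the core of $G$ is the union of the cycles of $G$, equivalently the set of edges lying on cycles together with their endpoints; any triangle or quadrilateral is itself a cycle and so lies in the core; hence if each core edge sits in a triangle or quadrilateral, the core is exactly the union of those triangles and quadrilaterals, which is what we want.

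To establish the claim, fix an edge $uv$ lying on a cycle and let $C\colon v_0 v_1 \cdots v_{k-1} v_0$ be a \emph{shortest} cycle containing it, with $v_0=u$, $v_1=v$, and length $k$. If $k\le 4$ we are done, so I assume $k\ge 5$ and aim for a contradiction by exhibiting a shorter cycle through $uv$. The key move is to apply condition~(4) to the two cycle-neighbours $v_0$ and $v_2$ of $v_1$: they are nonadjacent, since otherwise $v_0 v_1 v_2$ would be a triangle through $uv$ of length $3<k$. Thus there is a vertex $z$ with $N(v_0)\cup N(v_2)\subseteq \overline{N(z)}$. Because $v_1,v_{k-1}\in N(v_0)$ and $v_1,v_3\in N(v_2)$, the vertices $v_1,v_3,v_{k-1}$ all lie in $\overline{N(z)}$, and this is the information I would exploit.

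The heart of the proof --- and the step I expect to be the main obstacle --- is a case analysis on the position of $z$, each case being engineered to yield a cycle of length at most $4$ through $uv$. When $z\in\{v_1,v_{k-1}\}$, the membership of $v_{k-1}$ (resp.\ $v_1$) in $\overline{N(z)}$ forces $v_1$ and $v_{k-1}$ to be adjacent, producing the triangle $v_0 v_1 v_{k-1}$. When $z\notin\{v_0,v_1,v_2,v_{k-1}\}$, the vertex $z$ is adjacent to both $v_1$ and $v_{k-1}$, giving the quadrilateral $v_0 v_1 z v_{k-1}$. The two genuinely awkward cases are $z=v_0$ and $z=v_2$, where the common neighbour $v_1$ no longer helps; here I would instead use the \emph{other} cycle-neighbour, deducing that $v_0$ is adjacent to $v_3$ (whence the quadrilateral $v_0 v_1 v_2 v_3$) in the first case and that $v_2$ is adjacent to $v_{k-1}$ (whence $v_0 v_1 v_2 v_{k-1}$) in the second. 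In each case the triangle or quadrilateral contains the edge $uv$ and, thanks to $k\ge 5$, has three (resp.\ four) distinct vertices and length at most $4<k$, contradicting the minimality of $C$. The delicate bookkeeping is precisely verifying the distinctness of these vertices and confirming that the degenerate positions $z=v_0,v_2$ really do force the required chord; once that is in hand, $k\le 4$ follows and the lemma is proved.
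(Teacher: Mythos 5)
Your proof is correct, but it takes a genuinely different route from the paper's. The paper argues directly on an induced circuit $a_1a_2\cdots a_ka_1$: it applies condition (4) to the nonadjacent pair $a_1,a_3$, obtains a vertex $f$ with $N(a_1)\cup N(a_3)\subset \overline{N(f)}$ (hence $f$ adjacent to $a_2,a_4,a_k$), peels off two quadrilaterals containing the edges $a_1a_2$, $a_2a_3$, $a_3a_4$, $a_ka_1$, and recurses on the shorter circuit $a_kfa_4\cdots a_{k-1}a_k$, with the instruction to ``continue this process.'' You instead reduce to the edge-local claim that every edge on a cycle lies on a triangle or quadrilateral, and run a shortest-cycle/minimal-counterexample argument, applying condition (4) to $v_0,v_2$ and exhausting the possible positions of the witness $z$. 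Both arguments use the same engine --- condition $\star$ applied to two cycle vertices at distance two, which places $v_1$, $v_3$ and $v_{k-1}$ in $\overline{N(z)}$ --- but your version buys a cleaner termination (minimality of the cycle replaces the paper's informal recursion, whose shorter circuit need not even be induced) and an explicit treatment of the degenerate cases $z\in\{v_0,v_1,v_2,v_{k-1}\}$, which the paper dispatches only by asserting $f\ne a_1,a_2,a_3$. The paper's version, when fleshed out, yields slightly more information, namely an explicit decomposition of the circuit into named quadrilaterals; yours yields a cleaner proof of the stated implication. Your supporting checks --- that a shortest cycle through a fixed edge is chordless, and that the exhibited triangle or quadrilateral has distinct vertices whenever $k\ge 5$ --- all go through.
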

\begin{proof}
Suppose the induced subgraph is a circuit $a_1a_2a_3\cdots a_ka_1.$ Then $\{a_2,a_k\}\subset N(a_1),$ and $\{a_2,a_4\}\subset N(a_3).$  It follows that there exists a vertex $f$ such that $N(a_1)\cup N(a_3)\subset \overline{N(f)}.$ Since the induced subgraph is a circuit $a_1a_2a_3\cdots a_ka_1,$  we get $f\ne a_1,\, f\ne a_2,\, f\ne a_3.$  Hence $a_1a_2$ is an edge of quadrilateral $a_1a_2fa_ka_1 $ and $a_2a_3$ is an edge of quadrilateral $a_2a_3fa_ka_2 $. We have a short circuit $a_kfa_4\cdots a_{k-1}a_k.$  Continue this process we get $a_1a_2a_3\cdots a_ka_1$ is a union of quadrilaterals and triangles.
\end{proof}

Hence condition (4) in Theorem~\ref{condition} is the most important necessary condition. Henceforth, we call condition (4) in Theorem~\ref{condition} the condition $\star.$
\section{a way to fill the multiplication table of zero-divisor graphs and some properties of zero-divisor graphs with seven vertices}\label{section2}
\subsection{a way to fill the multiplication table of zero-divisor graphs}
\begin{remark}  For any $\{a,b\}\in V(G),$ if $a\ne b$, then $ab=0$ if and only if $a-b$ is an edge of $G$ and $ab\in V(G)$ if and only if $a-b$ is not an edge of $G.$  For any $a\in V(G),$ $a^2\in V(G)\cup\{0\}.$
\end{remark}
\begin{definition} Let $G$ be a connected graph $G$  with $V(G)=\{a,b, c,d, e,f,g\}$. Let $x\in V(G).$  The spectrum of $x$ is defined  to be $\Spec(a)=(xa,xb,xc, xd, xe, xf, xg).$  Given $y,z\in V(G),$  define $yz=x$ if $\Spec(yz)=\Spec(x).$
\end{definition}
\begin{definition} Let $a$ and $b$ be non-adjacent vertices of $G.$ Define $D(ab)$ to be a set of vertices such that $c\in D(ab)$ if and only if $N(a)\cup N(b)\subset \overline{N(c)}.$ For  vertex $a\in V(G)$, define $D(a^2)$ to be a set such that $c\in D(a^2)$ if and only if $c=0$ or $N(a)\subset \overline{N(c)}.$
\end{definition}
\begin{example}
The graph $G$ defined by
$N(a)=N(b)=\{x,y\},$  $N(c)=\{y\},$ $N(x)=\{a,b,y,z,w\},$ $N(y)=\{a,b,c,x\},$ $N(z)=\{x\}$, $N(w)=\{x\}$
is a zero-divisor graph because by Light's associativity test, one can show the multiplication given by following table is associative. 
\begin{center}
\begin{tabular}{|l|c|r|r|r|r|r|r|}
\hline
$\bullet$&$a$&$b$&$c$&$x$&$y$&$z$&$w$  \\ \hline
$a$&$a$&$a$&$a$&0&0&$a$&$a$  \\ \hline

$b$&$a$&$b$&$a$&0&$0$&$a$&$a$  \\ \hline
$c$&$a$&$a$&$c$&$x$&$0$&$a$&$a$  \\ \hline
$x$&0&$0$&$x$&$x$&$0$&$0$&0  \\ \hline
$y$&0&$0$&$0$&$0$&$y$&$y$&$y$  \\ \hline
$z$&$a$&$a$&$a$&$0$&$y$&$z$&$z$  \\ \hline
$w$&$a$&$a$&$a$&$0$&$y$&$z$&$w$\\ \hline

\end{tabular}
\end{center}

$\Spec(a)=(a, a,a, 0, 0, a,a),$ $\Spec(b)=(a,b,a,0, 0, a,a),$ $\Spec(c)=(a,a,c,x,0, a,a),$ $\Spec(x)=(0,0,x,x,0,0,0),$
$\Spec(y)=(0,0,0,0,y,y,y),$ $\Spec(z)=(a,a,a,0,y,z,z),$ $\Spec(w)=(a,a,a,0,y,z,w).$

$D(ab)=\{a,b,y\}$ because
\[N(a)\cup N(b)=\{x,y\}\subset N(a)\cap N(b)\cap \overline{N(y)}\] 
We define $ab=a$ because 
 \begin{enumerate}\item if  $ab=b,$ then 
$(ba)c=bc=a\ne b=ba=b(ac);$ \item
if $ab=y,$ then $y^2=0.$ It follows that $a^2b=ab=y\ne 0=ay=a(ab).$
\end{enumerate}
\end{example}
\subsection{some properties of zero-divisor graphs with seven vertices}
\begin{lemma}Let $G$ be a zero-divisor graph and $d(x)=\Delta,$  where $d(x)=|N(x)|$ and  $\Delta$ is the maximum degree. Then for any vertex $y\in V(G),$  $d(x,y)\le 2.$
\end{lemma}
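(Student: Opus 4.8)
The plan is to argue by contradiction and to reduce the whole statement to a single degree count that collides with the maximality of $\Delta$. By condition (2) of Theorem~\ref{condition} any two vertices of $G$ are joined by a path of at most three edges, so $d(x,y)\le 3$ holds automatically; it therefore suffices to rule out the possibility $d(x,y)=3$. Assuming $d(x,y)=3$, the vertices $x$ and $y$ are in particular non-adjacent, so the condition $\star$ hands me a vertex $z$ with $N(x)\cup N(y)\subset\overline{N(z)}$. Everything after this point is an attempt to show that such a $z$ would be forced to have degree $|N(z)|\ge\Delta+1$, which is impossible since $\Delta$ is the maximum degree.

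First I would isolate the auxiliary vertex that creates the excess degree. Because $d(x,y)=3$, every neighbour $b$ of $y$ satisfies $d(x,b)\ge 2$, and hence $b\notin\overline{N(x)}$; that is, $b\ne x$ and $b$ is not adjacent to $x$. Since $G$ is connected (condition (1)) with more than one vertex, $y$ has at least one neighbour, and I fix such a $b$. This single vertex $b$, lying in $\overline{N(z)}$ but outside $\overline{N(x)}$, is what will be counted as an "extra'' neighbour of $z$. As an immediate consequence I record that $z\ne x$: otherwise $b\in N(y)\subset\overline{N(x)}$, contradicting the choice of $b$.

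The core step is the inequality $|N(z)|\ge\Delta+1$. From $N(x)\subset\overline{N(z)}=N(z)\cup\{z\}$ I get $N(x)\setminus\{z\}\subset N(z)$, and I would then split on whether $z$ is adjacent to $x$. If $z\notin N(x)$ then all of $N(x)$ (a set of size $\Delta$) sits inside $N(z)$; if moreover $b\ne z$ then $b\in N(z)\setminus N(x)$ gives the $(\Delta+1)$-st vertex, while the degenerate case $b=z$ yields $y\in N(b)=N(z)$ with $y\notin N(x)$, so $y$ itself is the extra neighbour. If instead $z\in N(x)$, then $b=z$ is impossible (it would put $b\in N(x)$), so $b\in N(z)$; here $x\in N(z)$ as well, and the three ingredients $N(x)\setminus\{z\}$, $\{x\}$, and $\{b\}$ are pairwise disjoint, contributing $(\Delta-1)+1+1=\Delta+1$ neighbours of $z$. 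In every branch $|N(z)|>\Delta$, contradicting maximality, so $d(x,y)=3$ cannot occur and $d(x,y)\le 2$.

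The step I expect to demand the most care is precisely this final bookkeeping: verifying that the vertices I am assembling inside $N(z)$ are pairwise distinct and genuinely new. The delicate points are the degenerate coincidence $b=z$ (which behaves differently according to whether $z\in N(x)$), and remembering that when $z\in N(x)$ the vertex $x$ is itself a neighbour of $z$ that must be counted, yet $x\notin N(x)$ so it is not double-counted. Once these incidences are checked against $b\notin\overline{N(x)}$ and $x\not\sim y$, the contradiction with $\Delta$ is immediate and the lemma follows.
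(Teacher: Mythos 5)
Your proof is correct, but it takes a genuinely different route from the paper's. The paper disposes of the lemma in one line by quoting Lemma 4.1 of \cite{DGSW}, which asserts the stronger containment $N(y)\subseteq N(x)$ for every $y\notin N(x)$ when $x$ has maximal degree; combined with connectivity this immediately yields $d(x,y)\le 2$. You instead give a self-contained argument from the necessary conditions alone: diameter at most $3$ reduces everything to excluding $d(x,y)=3$, and then condition $\star$ produces a witness $z$ whose degree you force up to $\Delta+1$ by exhibiting $\Delta+1$ pairwise distinct neighbours, contradicting maximality. Your case analysis checks out: the neighbour $b$ of $y$ genuinely lies outside $\overline{N(x)}$ because $d(x,y)=3$, the coincidence $b=z$ is handled correctly in both branches (in the branch $z\in N(x)$ it cannot occur, and in the branch $z\notin N(x)$ the vertex $y$ itself supplies the extra neighbour), and the three contributing sets in the adjacent case are indeed pairwise disjoint. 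What the paper's citation buys is brevity and the stronger structural fact $N(y)\subseteq N(x)$, which it reuses elsewhere (e.g.\ in Lemma 2.8 of Section 2); what your argument buys is independence from \cite{DGSW} — it is essentially a direct reproof, for this weaker conclusion, of the mechanism underlying that cited lemma, using only the conditions of Theorem 1 of \cite{fl}.
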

\begin{proof} By  \cite[Lemma 4.1]{DGSW}, we get $N(y)\subseteq N(x)$ for all $y\notin N(x).$ 
\end{proof}
\begin{lemma} Let $G$ be a zero-divisor graph with seven vertices.  If $d(x)=\Delta,$ then $d(x)\ge 4.$
\end{lemma}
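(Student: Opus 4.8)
The plan is to argue by contradiction: suppose $G$ is a zero-divisor graph on seven vertices whose maximum degree satisfies $\Delta=d(x)\le 3$, and derive a contradiction from the necessary conditions of Theorem~\ref{condition} together with the structural fact used in the previous lemma, namely that $N(y)\subseteq N(x)$ for every $y\notin N(x)$. Writing $\overline{N(x)}=\{x\}\cup N(x)$ and letting $R=V(G)\setminus\overline{N(x)}$ be the set of remote vertices, one gets $|R|=6-\Delta$, every remote vertex has all of its neighbors inside $N(x)$, and in particular no two remote vertices are adjacent. Since $G$ is connected on seven vertices, every vertex has degree $\ge 1$.

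First I would dispose of the small cases. If $\Delta=1$ there is no connected realization on seven vertices at all; and if $\Delta=2$ then $G$ is a path or a cycle, where the path $P_7$ has diameter $6$ and violates condition (2), while the seven-cycle $C_7$ is its own core and is not a union of triangles and quadrilaterals, violating condition (3). Hence $\Delta\ge 3$, and it remains to rule out $\Delta=3$.

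For $\Delta=3$ write $N(x)=\{p,q,r\}$, so $R=\{u,v,w\}$ consists of exactly three vertices whose neighborhoods are nonempty subsets of $\{p,q,r\}$. Counting the edges between $R$ and $N(x)$ against the residual degree budget of $p,q,r$ (each already joined to $x$) shows that at most one edge can lie inside $\{p,q,r\}$; I would then split into the cases $e_{pqr}=0$ and $e_{pqr}=1$, where $e_{pqr}$ denotes the number of such interior edges. When $e_{pqr}=0$ the points $p,q,r$ are pairwise at distance $2$, so condition (2) forces $N(u),N(v),N(w)$ to be pairwise intersecting; a short check pins the configuration down to the unique "sunflower" $N(u)=\{p,q\}$, $N(v)=\{q,r\}$, $N(w)=\{p,r\}$, for which I test condition $\star$ on the nonadjacent pair $(p,v)$: the set $N(p)\cup N(v)$ has five elements, so it cannot be contained in any $\overline{N(z)}$ once $\Delta=3$, a contradiction. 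When $e_{pqr}=1$, say the interior edge is $pq$, the budget bound forces at least one remote vertex to be a pendant at $r$, and condition (2) then forces every remote vertex to be adjacent to $r$, giving $d(r)\ge 4$ and contradicting the maximality of $d(x)=3$.

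The routine part is the edge-counting that yields $e_{pqr}\le 1$ and the enumeration of pairwise-intersecting families on a three-element set. The main obstacle I anticipate is making the case $\Delta=3$ airtight: one must be certain that the cited consequence genuinely forbids all remote-remote edges and that conditions (2) and $\star$ are invoked on exactly the right pairs, since it is precisely the interplay between ``remote vertices are trapped inside $N(x)$'' and ``the far vertex $r$ cannot absorb enough edges'' that produces the contradiction.
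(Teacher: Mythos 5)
Your argument is correct, and it follows the same overall strategy as the paper's proof --- assume $\Delta\le 3$, use the containment $N(y)\subseteq N(x)$ for every vertex $y$ outside $\overline{N(x)}$ to trap the three remote vertices inside $N(x)$, and then derive a violation of one of the necessary conditions of Theorem~\ref{condition}. The difference is in how the $\Delta=3$ case is decomposed. The paper reasons directly about the degrees of the three neighbors of the maximum-degree vertex, reduces (rather tersely) to the single configuration in which each remote vertex is a pendant at a distinct neighbor, and kills that configuration with condition $\star$ applied to two of those neighbors; it does not visibly dispose of configurations such as two remote vertices hanging from the same neighbor, nor does it address $\Delta\le 2$. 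Your decomposition by the number $e_{pqr}$ of interior edges of $N(x)$ (which the degree budget caps at one) is exhaustive: when $e_{pqr}=0$, condition (2) forces the remote neighborhoods to be pairwise intersecting, a common element would already give a vertex of degree $4$, and the surviving sunflower is killed by $\star$ on the pair $(p,v)$ since $|N(p)\cup N(v)|=5>\Delta+1$; when $e_{pqr}=1$, the budget forces a pendant at the third vertex $r$, and since $N(r)$ and the neighborhood of any other remote vertex live in disjoint parts of the graph, condition (2) forces all three remote vertices onto $r$, giving $d(r)\ge 4$. I verified each of the steps you deferred (the bound $e_{pqr}\le 1$, the uniqueness of the sunflower, and the pendant-at-$r$ claim in all admissible degree distributions), and they all go through. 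Your version is the more complete of the two; what the paper's buys in exchange is brevity, since it jumps directly to the one configuration it intends to refute.
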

\begin{proof} Suppose $\Delta=3.$ Let $N(a)=\{x,y,z\}.$  Since $|N(a)|=3, $  one gets that $N(b),\,N(c)$ and $N(w)$ are subsets of $N(a).$  Suppose $N(x)=\{a,b,c\}.$  Since $|N(x)=3,$ one gets $w\in N(y)$ or $w\in N(z)$ and $x\notin N(y)$ or $x\notin N(z).$  It follows that $N(y)$ is not a subset of  $N(x)$  or $N(z)$ is not  a subset of $N(x).$ This is a contradiction.  Hence one gets $N(x)\cap\{a,b,c\}=N(y)\cap\{a,b,c\}=N(z)\cap\{a,b,c\}=2.$ If $d(x)=3,$ then one get $xy$ or $xz$ is an edge. Hence $d(y)=3$ or $d(z)=3.$ If $xy$ is an edge, then $xz$ is not an edge.But $N(z)$ is not an subset of $N(x).$  Contradiction. Hence $|N(x)|=|N(y)|=|N(z)|=2.$ Let $N(b)=\{z\},$ $N(c)=\{y\},$ $N(w)=\{x\}.$  Then there is no $v$ such that $N(y)\cup N(z)\subseteq \overline {N(v)}.$ The proof is completed. 
  
\end{proof}     
\begin{lemma}\label{highest}Let $G$ be a connected graph satisfies the necessary condition $\star$. Let $d(x)=\Delta.$ Then emanating from $x$ an end $w$ results a graph satisfies the necessary condition $\star$.
\end{lemma}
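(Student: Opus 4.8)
The plan is to realize the new graph, call it $G'$, concretely: $V(G')=V(G)\cup\{w\}$ with $N_{G'}(w)=\{x\}$, $N_{G'}(x)=N(x)\cup\{w\}$, and $N_{G'}(v)=N(v)$ for every $v\in V(G)\setminus\{x\}$. I would then verify condition $\star$ for $G'$ directly, by running through every pair of nonadjacent vertices of $G'$ and exhibiting a witness $z$ with $N_{G'}(p)\cup N_{G'}(q)\subseteq\overline{N_{G'}(z)}$. Since $w$ is adjacent only to $x$, the nonadjacent pairs split into three types: (i) pairs $\{p,q\}\subseteq V(G)\setminus\{x\}$ that are nonadjacent in $G$; (ii) pairs $\{x,q\}$ with $q$ nonadjacent to $x$; and (iii) the new pairs $\{w,v\}$ with $v\in V(G)\setminus\{x\}$.

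The heart of the argument, and the step I expect to be the main obstacle, is the following consequence of maximality together with $\star$: \emph{if $v\neq x$ is nonadjacent to $x$ then $N(v)\subseteq N(x)$.} I would establish this from $\star$ alone, since here $G$ is only assumed to satisfy $\star$ and need not be a zero-divisor graph. Applying $\star$ to the nonadjacent pair $x,v$ yields $z$ with $N(x)\cup N(v)\subseteq\overline{N(z)}=N(z)\cup\{z\}$. If $z=x$, then $N(v)\subseteq\overline{N(x)}$ and $x\notin N(v)$ give $N(v)\subseteq N(x)$. If $z\in N(x)$, then $x\in N(z)\setminus N(x)$ and $N(x)\setminus\{z\}\subseteq N(z)$, so $d(z)\ge d(x)$; maximality of $d(x)$ forces $N(z)=(N(x)\setminus\{z\})\cup\{x\}$, whence $\overline{N(z)}=\overline{N(x)}$ and again $N(v)\subseteq N(x)$. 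If $z\notin N(x)$ and $z\neq x$, then $N(x)\subseteq N(z)$, so maximality gives $N(x)=N(z)$; were $z\in N(v)$ we would get $v\in N(z)=N(x)$, contradicting nonadjacency, so $z\notin N(v)$ and $N(v)\subseteq N(x)\cup\{z\}$ collapses to $N(v)\subseteq N(x)$. This is precisely where the maximum-degree hypothesis is indispensable; a routine check on $K_{2,2}$ shows the analogous statement $N(v)\subseteq\overline{N(x)}$ fails for neighbors of $x$, so the split in case (iii) below is genuinely needed.

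With this claim in hand the three cases become routine. For (i), the neighborhoods of $p$ and $q$ are unchanged in passing to $G'$, so the witness $z$ provided by $\star$ in $G$ still works, because $\overline{N_{G'}(z)}\supseteq\overline{N(z)}$. For (ii), the claim gives $N(q)\subseteq N(x)$, hence $N_{G'}(x)\cup N_{G'}(q)=N(x)\cup\{w\}\subseteq\overline{N_{G'}(x)}$, so $z=x$ is a witness. For (iii) I split on whether $v$ is adjacent to $x$: if it is, then $x\in N(v)$ and $N_{G'}(w)\cup N_{G'}(v)=N(v)\subseteq\overline{N_{G'}(v)}$, so $z=v$ works; if $v$ is nonadjacent to $x$, the claim gives $N(v)\subseteq N(x)$, so $N_{G'}(w)\cup N_{G'}(v)=\{x\}\cup N(v)\subseteq\{x\}\cup N(x)\subseteq\overline{N_{G'}(x)}$ and $z=x$ works. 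Having produced a witness in every case, $G'$ satisfies $\star$, which is the assertion of the lemma.
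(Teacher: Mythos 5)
Your proof is correct and follows essentially the same route as the paper: the key step in both is that maximality of $d(x)$ together with condition $\star$ forces $N(v)\subseteq N(x)$ for every $v$ nonadjacent to $x$, after which the new pairs involving $w$ are witnessed by $x$ or by $v$ itself. You are in fact more careful than the paper, which merely cites a lemma of Demeyer--Greve--Subbaghi--Wang stated for zero-divisor graphs and silently skips the pairs $\{x,q\}$ and the pairs inherited from $G$; your derivation of the containment from $\star$ alone and your explicit check of all three types of nonadjacent pairs close those small gaps.
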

\begin{proof} If $x$ and $y$ are not adjacent, then $N(y)\subseteq N(x)$ by \cite{DGSW}. Hence $N(y)\cup N(w)\subset N(x).$
If $y\in N(x),$ then $N(y)\cup N(w)=N(y)\cup \{x\}=N(y).$

\end{proof} 
\section{zero divisor graph with seven vertices}\label{section3}

\subsection{zero-divisor graphs  with seven vertices produced by applying Theorems in \cite{fl} and \cite{sauer} }
\begin{theorem}\cite[Theorem 3]{fl}\label{fl} The following graphs are the graph of a semigroup.
\begin{enumerate} 
\item A complete graph or a complete graph together with one end.
\item A complete bipartite graph or a complete bipartite graph together with an end.
\item A refinement of a star graph.
\item A graph which is the union of two star graphs whose centers are connected by a single edge.
\end{enumerate}
\end{theorem}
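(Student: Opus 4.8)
The plan is to prove each of the four families realizable by exhibiting, on the vertex set together with an adjoined zero, an explicit commutative multiplication that recovers $G$---that is, for distinct vertices $x,y$ one sets $xy=0$ exactly when $x$ and $y$ are adjacent, and one assigns each remaining product ($xy$ for non-adjacent $x,y$, and each square $x^2$) a value in $V(G)\cup\{0\}$---and then checking that the resulting table is associative, for instance by Light's associativity test. Commutativity and the recovery of the edge set hold by construction, so the entire content lies in the choice of the ``free'' entries and the verification of associativity; the latter is the one genuine obstacle.

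For the two purely bipartite-type families the choice is transparent. For a complete graph one takes the null multiplication $xy=0$ and $x^2=0$ for all vertices: every triple product is $0$, so associativity is immediate, and every distinct pair is adjacent as required. For a complete bipartite graph with parts $X$ and $Y$ one sets $xy=0$ across the parts and collapses each part to a fixed idempotent representative, $x_ix_j=x_0$ and $y_iy_j=y_0$ (squares included); a triple with two factors in the same part returns that part's representative, while a triple meeting both parts returns $0$, so associativity again follows at once.

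The real work is in the three remaining configurations---a complete or complete bipartite graph with an end attached, a refinement of a star, and the union of two stars joined at their centers---where the products that are forced to be nonzero must be routed consistently and, crucially, the squares must be fixed with the correct nilpotent-versus-idempotent behavior. The method I would use is to extract, before choosing anything, the constraints coming from those triples $(xy)z=x(yz)$ in which one inner product is a zero-edge: each such identity forces a nonzero product to lie in the annihilator $\{\,v : vc=0\,\}$ of the relevant vertex $c$, and intersecting these annihilators pins every ambiguous product down to a very small candidate set. One then determines the squares from the remaining triples---and this is exactly where an asymmetric assignment becomes unavoidable (for the double star the representative of one star must satisfy $c^2=0$ while the other is idempotent, as one already sees in the path $a{-}b{-}c{-}d$, and the same phenomenon occurs at the attaching vertex of an end)---before finishing with Light's test on the completed table. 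I expect the associativity bookkeeping in the double-star and end-augmented cases to be the main difficulty, precisely because the annihilator of the attaching vertex (respectively, of a star center) is so small that a single wrong square value breaks a mixed triple; organizing the whole verification around these annihilator constraints is what keeps it finite and under control.
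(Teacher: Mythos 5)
This statement is not proved in the paper at all: it is quoted verbatim from \cite{fl} (their Theorem 3) and used as a black box, so there is no internal argument to measure yours against. Judged on its own terms, your overall strategy is the standard one for realizability results of this kind, and the two cases you actually complete are correct: the null multiplication realizes a complete graph, and collapsing each part of a complete bipartite graph onto an idempotent representative, with all cross products zero, gives an associative table. Your structural observations are also the right ones: associativity forces a product $xy$ of non-adjacent vertices to annihilate $N(x)\cup N(y)$, hence to lie in $\overline{N(z)}$ for some $z$, with $z^2=0$ forced whenever $z$ itself belongs to $N(x)\cup N(y)$; this is exactly the mechanism behind condition $\star$ and behind the asymmetry you point out in the double star (for the path $a\,\text{--}\,b\,\text{--}\,c\,\text{--}\,d$ one is driven to $ac=ad=c$, $bd=b$, $c^2=0$, $b^2=b$, $a^2=a$, $d^2=b$).

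The gap is that for the families you yourself identify as carrying all the content --- a complete or complete bipartite graph with an end attached, and the double star --- you describe how you would search for the table but never produce one, and the search procedure you describe does not by itself constitute a proof. The annihilator constraints are necessary conditions: they narrow each undetermined product to a small candidate set, but they cannot certify that some globally consistent, associative choice exists, and indeed Section 4 of this paper consists entirely of graphs where that same narrowing process terminates in a contradiction. So until the tables are exhibited and verified (by Light's test or by a case split on which factors are adjacent), parts (1) and (2) with an end and part (4) remain unproved. You also pass over part (3), the refinement of a star, as if it required the same work, when it is actually the trivial case: send every non-adjacent product and every square except the center's to the center $c$, and set $c^2=0$; then every value lies in $\{0,c\}$ and $c$ annihilates everything, so every triple product is $0$ and associativity is automatic.
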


\begin{theorem}
\begin{enumerate} \item
\cite[Theorem IV.1]{sauer} A complete graph together with any number of ends, each of which emanates from one of two vertices, is the graph of a commutative semigroup.
\item
\cite[Theorem IV.2]{sauer} The complete graph on three vertices, from each of which emanates at least one end, is the graph of a commutative semigroup.
\item 
\cite[Theorem IV.3]{sauer} The complete graph on four or more vertices together with any number of ends emanating from at least three different vertices is never the graph of a commutative semigroup.
\item \cite[Theorem IV.4]{sauer} A complete bipartite graph together with any number of ends emanating from the same vertex is the graph of a commutative semigroup.
\item \cite[Theorem IV.5]{sauer}A complete bipartite graph together with two or more ends emanating from at least two distinct vertices is never the graph of a commutative semigroup.
  
\end{enumerate}
\end{theorem}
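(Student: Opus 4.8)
The plan is to separate the five assertions into the three \emph{constructive} statements (1), (2), (4), which claim that a given graph \emph{is} a zero-divisor graph, and the two \emph{impossibility} statements (3), (5), which claim that a graph is \emph{never} one. For a constructive statement the strategy is the one already illustrated in the Example: exhibit a single multiplication table refining the adjacency data and verify associativity by Light's test. For an impossibility statement the strategy is to assume an associative multiplication exists and extract a contradiction from the relation governing products of non-adjacent vertices, namely that if $x,y$ are non-adjacent then $xy$ is a vertex $z$ with $N(x)\cup N(y)\subset\overline{N(z)}$; this is exactly how condition $\star$ arises, since $a\in N(x)$ gives $az=a(xy)=(ax)y=0$, whence $a\in\overline{N(z)}$.

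For the constructive claims I would use the following design principle. In each case the graph is a ``core'' (a complete graph in (1), (2); a complete bipartite graph in (4)) to which ends are attached at certain vertices. On the core I take the standard multiplication underlying Theorem~\ref{fl} (for a complete graph, $uv=0$ for distinct $u,v$ and $u^{2}=u$; for a complete bipartite graph, the analogous idempotent table). I then force every end $w$ attached to a core vertex $v$ to \emph{mimic} $v$ away from $v$: $w$ annihilates exactly $v$, as the adjacency demands, and otherwise reproduces the products of $v$, collapsing all ends at $v$ to a common value. Because all ends at a fixed vertex then act identically, every triple product involving ends should reduce to a triple product on the core together with at most one such collapse, so that the associativity check collapses to the finite list of mixed identities Light's test requires. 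Statements (1), (2), (4) differ only in which core and which attachment pattern is admitted, so each becomes a bounded verification of this kind.

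The impossibility claims require the finer analysis, and this is where the main work lies. A preliminary observation is that condition $\star$ already settles part of (5): if two ends emanate from two \emph{distinct vertices of the same part} $a_{1},a_{2}$ of the bipartition, then $a_{1},a_{2}$ are non-adjacent and $N(a_{1})\cup N(a_{2})$ contains both ends together with the whole opposite part, so a dominating $z$ would have to satisfy $z\in\{a_{1},e_{1}\}$ and $z\in\{a_{2},e_{2}\}$ simultaneously; these are disjoint, $\star$ fails, and the graph is excluded. However, for (3) and for the cross-part case of (5) one checks that $\star$ \emph{holds} — in a complete graph any two ends share a third core vertex as common neighbour, and a cross-part pair of ends $e_{1},e_{2}$ attached to adjacent vertices $a_{1},b_{1}$ is dominated by $a_{1}$ itself — so these cases cannot be excluded by $\star$ and genuinely need associativity. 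Here I would name the attachment vertices $1,2,3$ with ends $e_{1},e_{2},e_{3}$, list the few admissible values of each product $e_{i}e_{j}$ and $e_{i}\cdot k$ from the relation above, and expand $(e_{1}e_{2})e_{3}=e_{1}(e_{2}e_{3})$ to force an equation among core vertices with no solution.

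The hard part will be precisely this last step, and in particular explaining the boundary between statements (2) and (3): both $K_{3}$ with an end at every vertex and $K_{n}$ with $n\ge 4$ and ends at three vertices satisfy $\star$, yet the former \emph{is} a zero-divisor graph and the latter is not. The reason is rigidity. In $K_{3}$ two ends $e_{i},e_{j}$ have a \emph{unique} available common core-neighbour, the third vertex, so the products $e_{i}e_{j}$ are forced and the symmetric assignment automatically associates; for $n\ge 4$ each $e_{i}e_{j}$ may legally land on any of several core vertices, and no consistent choice survives the triple-product identity. Organizing this enumeration — tracking the admissible targets of each $e_{i}e_{j}$ through associativity — is the true obstacle; once it is in place, the contradiction for $n\ge 4$ and for the cross-part bipartite case is a finite check, and the positive cases reduce to Light's test as above.
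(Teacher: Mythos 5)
First, a remark on the comparison itself: the paper does not prove this statement at all --- it is a verbatim citation of Theorems IV.1--IV.5 of \cite{sauer}, recorded so that they can be invoked later (and the paper even remarks, in its treatment of $G741$, that Sauer's proof of Theorem IV.1 is flawed). So you are attempting to prove something the paper imports as known, and the only question is whether your argument would actually work.

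It would not, and the failure is in the constructive half. Your design principle is to let an end $w$ attached to a core vertex $v$ satisfy $wv=0$ and $wu=vu$ for every other vertex $u$. But $w$ is adjacent \emph{only} to $v$, so the defining requirement of a zero-divisor graph forces $wu\neq 0$ for every $u\neq v$; meanwhile your base table on a complete core has $vu=0$ for every core vertex $u\neq v$, and on a complete bipartite core has $vb=0$ for every $b$ in the opposite part. Thus ``$wu=vu$'' assigns the product $0$ to non-adjacent pairs, i.e.\ the table does not refine the adjacency data and the construction collapses before associativity is even in question. The correct tables are genuinely different: for instance in the paper's $G319$ (a complete bipartite graph with ends $5,6,7$ at vertex $4$) one has $4\cdot 2=0$ but $5\cdot 2=2$, so the end emphatically does not mimic its attachment vertex; the right analogue of ``duplication'' is the one in \cite[Lemma 3.14]{DGSW}, where the new vertex copies a vertex with the \emph{same} (closed) neighborhood, which an end never has. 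Your impossibility half is closer to viable --- the exclusion of the same-part case of (5) by condition $\star$ is correct and clean --- but it too is only an outline: in the cross-part case of (5) there may be exactly two ends, so the identity $(e_1e_2)e_3=e_1(e_2e_3)$ you propose to expand is unavailable and a different associativity violation must be exhibited; and your rigidity claim for $K_3$ is not literally true, since $e_ie_j$ may land on $i$ or $j$ as well as on the third vertex (only $\{i,j\}\subset\overline{N(z)}$ is required, and $z=i$ qualifies). As it stands, parts (1), (2), (4) are unproved and parts (3), (5) are proved only in the one subcase where $\star$ already fails.
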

\begin{theorem}\cite{sauer} Up to isomorphism, there are 67 zero-divisor graphs with six vertices.  They are one star graph, 33 refinements of  a star graph, two double star graphs, two complete bipartite graphs, four complete bipartite graphs with ends emanating from at most two different vertices,  a complete graphs with three vertices and three ends emanating from three different vertices, and twenty-four exceptional cases.
 \end{theorem}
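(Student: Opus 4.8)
The plan is to turn the classification into a finite, mechanically checkable problem by starting from the complete census of connected graphs on six vertices compiled in \cite{leck}, and then processing that list in a filtering pass followed by a realization pass. Since we have shown that, for connected graphs, condition $\star$ implies both condition (2) and condition (3) of Theorem~\ref{condition}, it is the single strongest necessary condition, so testing $\star$ alone applies the full necessary-condition filter. Concretely, for each graph in the census and each pair of nonadjacent vertices $x,y$ one checks whether some vertex $z$ satisfies $N(x)\cup N(y)\subseteq\overline{N(z)}$; any graph that fails this for even one nonadjacent pair is discarded at once, since it cannot be a zero-divisor graph.

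For the surviving candidates I would separate the graphs covered by the general realizability theorems from the residual exceptional graphs. The structural families named in the statement — the star, refinements of a star, double stars, complete bipartite graphs, complete bipartite graphs carrying ends, and the complete graph on three vertices with three ends — are each realized as commutative semigroups by the explicit constructions of Theorem~\ref{fl} and Theorems~IV.1 through IV.5 of \cite{sauer}. These theorems also pin down exactly when attaching ends destroys realizability (for instance, a complete bipartite graph with two or more ends emanating from two or more distinct vertices is never the graph of a semigroup), which is what produces the precise counts of $1+33+2+2+4+1=43$ graphs in the first six categories.

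For each remaining candidate not matched to a template, I would attempt to build a commutative multiplication table directly by the spectrum method of section~\ref{section2}: every nonadjacent product $ab$ is constrained to lie in $D(ab)$ and every square $a^2$ to lie in $D(a^2)$, so one enumerates the finitely many consistent assignments and subjects each to Light's associativity test. The graph is a zero-divisor graph precisely when some admissible table passes the test; when none does, a short obstruction argument of the kind used in the worked example of section~\ref{section2} — where each candidate value of a product is eliminated by exhibiting an explicit associativity violation — certifies that the graph is not realizable. Running this analysis to completion isolates exactly the twenty-four exceptional zero-divisor graphs and verifies that no further graph survives, giving the total of $43+24=67$.

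The main obstacle is the case analysis for the exceptional graphs. Exhibiting one valid table suffices to confirm realizability, but proving \emph{non}-realizability requires ruling out \emph{every} consistent table, and the number of admissible assignments can be large once several products are left free by the $D(ab)$ and $D(a^2)$ constraints. The real work therefore lies in the bookkeeping: tabulating which entries are forced (ends multiply into a fixed absorbing vertex, squares and cross-products constrained by the spectra), which remain free, and which free choices propagate into an associativity failure. I expect the forced structure to collapse most cases immediately, leaving only a handful of genuinely delicate graphs whose non-realizability must be argued by hand.
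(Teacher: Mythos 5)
This statement is quoted from Sauer's thesis and the paper supplies no proof of its own, but your outline --- filter Leck's census of connected six-vertex graphs by condition $\star$, realize the structural families via Theorem~\ref{fl} and Theorems~IV.1--IV.5 of \cite{sauer}, and settle the residual cases by constraining products to $D(ab)$ and $D(a^2)$ and running Light's associativity test --- is exactly the methodology Sauer uses and that this paper itself replicates for seven vertices in Sections~\ref{section3} and~\ref{section4}. Your approach is essentially the same as the paper's, and your arithmetic $1+33+2+2+4+1+24=67$ is consistent with the stated counts.
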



\subsection{a seven vertices zero-divisor graph produced by applying \cite[Lemma 3.14]{DGSW}}
\begin{theorem}\cite[Lemma 3.14]{DGSW}\label{add vertex} Suppose $G$ is a zero divisor graph of a semigroup, $x\in V(G),$ $y\notin V(G)$.  Let $G'$ be a graph defined by $V(G')=V(G)\cup \{y\},$  $N(y)=N(x)$ if $x^2\not=0,$ or $N(y)=\overline{N(x)}$ if $x^2=0.$   Then $G'$ is a zero-divisor graph of a semigroup.
\end{theorem}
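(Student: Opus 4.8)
The plan is to realize $G'$ by a direct extension of the semigroup $S=S(G)$ that already realizes $G$, adjoining $y$ as a formal ``copy'' of $x$. Concretely, I would set $S'=S\cup\{y\}$ with $y$ a new symbol, and define a collapsing map $\phi\colon S'\to S$ by $\phi(u)=u$ for $u\in S$ and $\phi(y)=x$. The multiplication on $S'$ I would define by the single formula $u\cdot v:=\phi(u)\phi(v)$, where the product on the right is taken in $S$. The whole point of this choice is that every product lands inside $S$: no product ever outputs the new element $y$, so $\cdot$ is well defined as a map $S'\times S'\to S\subseteq S'$, and it restricts to the given multiplication on $S$. In particular $y\cdot a=xa$, $y\cdot x=x^2$, and $y\cdot y=x^2$.

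Granting this definition, commutativity and the fact that the old zero of $S$ remains a zero of $S'$ are immediate from the corresponding facts in $S$. For associativity---the only structural point requiring care---I would first observe that, since products land in $S$ and $\phi$ is the identity on $S$, we have $\phi(u\cdot v)=\phi(u)\phi(v)$ for all $u,v$; that is, $\phi$ is a homomorphism onto the subsemigroup $S$. Then $(u\cdot v)\cdot w=\phi(u\cdot v)\phi(w)=(\phi(u)\phi(v))\phi(w)$ and $u\cdot(v\cdot w)=\phi(u)\phi(v\cdot w)=\phi(u)(\phi(v)\phi(w))$, and these agree by associativity in $S$. Thus associativity in $S'$ reduces, with essentially no computation, to associativity in $S$; recognizing this reduction (rather than running Light's test by hand) is the key idea and the step I expect to be the main, though mild, obstacle to phrase cleanly.

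It remains to check that $\Gamma(S')$ is exactly $G'$. For distinct $u,v\in S'$ one has $u\cdot v=0$ iff $\phi(u)\phi(v)=0$, so the edges among the old vertices are unchanged and each $N(a)$ is enlarged only by the possible new neighbour $y$. The adjacency of $a$ to $y$ is governed by $\phi(a)\,x=0$, so I would split into the two cases of the statement: if $x^2\neq 0$ then $xy=x^2\neq 0$, whence $x$ is not adjacent to $y$ and $N(y)=N(x)$; if $x^2=0$ then $xy=x^2=0$, whence $x$ is adjacent to $y$ and $N(y)=\overline{N(x)}$. Finally $y$ is a genuine nonzero zero-divisor: choosing any $w\neq 0$ with $xw=0$ (which exists since $x$ is a zero-divisor of $S$) gives $yw=xw=0$, so $y$ is a vertex of $\Gamma(S')$ and $V(\Gamma(S'))=V(G)\cup\{y\}=V(G')$. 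Hence $\Gamma(S')=G'$, completing the argument.
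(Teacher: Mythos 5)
Your construction is exactly the paper's: the paper's one-line proof defines $xy=x^2=y^2$ and $yz=xz$ for all $z\in V(G)$, which is precisely your multiplication $u\cdot v=\phi(u)\phi(v)$ with $\phi(y)=x$. Your retraction-homomorphism framing is a clean and correct way to justify the associativity and the graph identification that the paper leaves implicit, so the proposal is correct and takes essentially the same approach.
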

\begin{proof} Define $xy=x^2=y^2$ and  $xz=yz$ for all $z\in V(g)-\{y\}.$
\end{proof}

\subsection{list of all the zero-divisor graphs with seven vertices}
\begin{theorem} The following graphs are all the zero-divisor graphs in \cite{RW}. Notice there exists a one-to-one correspondence between zero-divisor graphs and zero-divisor semigroups. Hence we give a classification of all the zero-divisor commutative semigroups with 8 elements.
\begin{center}
 $G270-G272$,$G314-G317$,$G319$,$G379-G382,$ $G384$, $G388$,$G390$,$G392-G393$,$G411$,$G473-G474$,$G476-G481$,$G483$,$G485-G486$,$G493$,$G503$,$G507$,$G513$, $G522$,$G525$,$G551$,$G598-G599$,$G601$,$G603-G604$,$G606$,
$G612-G614$,$G616$,$G618-G620$, $G624$,$G626$, $G629$, $G631$,$G633$,$G639$,$G667-G668$,$G670$,$G671-G672$,$G678$,$G740-G741$,$G743$,$G746-G749$,$G751-G753$,$G755$,$G757-G759$, $G762$,$G764$, $G767$,$G775$,
$G780$,$G786$,$G790-G792$,$G794-G796$,$G798$,$G800-G801$,$G805$, $G812-G815$, $G832$, $G872$, $G884-G891$, $G894$, $G896-G898$, $G902$, $G906$, $G908-G909$, $G913-G916$, $G919-G925$, $G927$, $G929-G930$, $G932$, $G934$, $G939$, $G944$,$G948$,$G950-G952$,$G957$, $G972$, $G975$,$G1007-G1009$, $G1012-G1020$, $G1025-G1029$, $G1031-G1032$, $G1035-G1042$, $G1045-G1050$, $G1052-G1053$, $G1056-G1057$, $G1059$, $G1062$,$G1067$, $G1072$, $G1077-G1081$, $G1085$, $G1088$, $G1106$, $G1108-G1111$, $G1113-G1119$, $G1121-G1126$, $G1128-G1129$, $G1131-G1132$, $G1134-G1135$, $G1137-G1145$,
$G1147-G1152$, $G1157$,$G1163$, $G1169$, $G1173-G1176$, $G1178-G1200$, $G1202-G1203$, $G1205-G1208$, $G1210$, $G1213-G1252$,
\end{center}
\end{theorem}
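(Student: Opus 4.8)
The plan is to establish the claimed equality between the listed graphs and the full collection of seven-vertex zero-divisor graphs by going through the catalogue of \cite{RW} one graph at a time, and splitting the verification into a positive direction (each listed graph \emph{is} a zero-divisor graph) and a negative direction (each omitted connected graph \emph{fails} to be one).

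For the positive direction, I would first sort the listed graphs by structural type. Any graph matching one of the families in Theorem~\ref{fl} --- a complete graph with an end, a complete bipartite graph with an end, a refinement of a star, or a union of two stars joined by a single edge --- together with the refinements covered by the Sauer theorems, is immediately a zero-divisor graph by citation. Next I would apply Theorem~\ref{add vertex}: each of Sauer's sixty-seven six-vertex zero-divisor graphs yields a seven-vertex one by duplicating the neighborhood $N(x)$ (or the closed neighborhood $\overline{N(x)}$ when $x^2=0$) of a suitable vertex, so a large block of the list is produced this way. The residual exceptional graphs require explicit multiplication tables; here I would use the spectrum formalism, computing the sets $D(ab)$ for each non-adjacent pair to pin down the admissible values of $ab$, fill the table accordingly, and certify associativity by Light's test, exactly as in the worked Example.

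For the negative direction, I would run each omitted connected graph against the necessary condition $\star$ (condition (4) of Theorem~\ref{condition}), which the lemmas of Section~\ref{section1} single out as the strongest of the four necessary conditions; a failure of $\star$ disposes of the bulk of the non-zero-divisor graphs at once. I would also invoke the degree restrictions of Section~\ref{section2} --- that a maximum-degree vertex has $\Delta \ge 4$ and that $d(x,y)\le 2$ measured from it --- to prune further. The graphs surviving both $\star$ and these constraints are precisely the delicate cases deferred to Section~\ref{section4}: those satisfying $\star$ yet admitting no associative multiplication.

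The main obstacle is exactly this last collection. For such a graph one cannot simply cite a violated necessary condition; instead one must show that \emph{every} way of filling the table fails associativity. The argument is a finite case analysis on the products $ab$ of non-adjacent vertices: each value must lie in $D(ab)$, and for each admissible choice I would exhibit an associativity failure of the form $(ab)c \ne a(bc)$, mirroring the two-case elimination in the Example. Carrying out this case analysis uniformly across all the surviving graphs, rather than the sheer length of the enumeration itself, is where the real work lies.
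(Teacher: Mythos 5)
Your proposal matches the paper's own strategy almost exactly: the paper proves the positive direction by sorting the listed graphs into the known families (star refinements, complete bipartite with ends, Sauer's theorems), by applying \cite[Lemma 3.14]{DGSW} to six-vertex zero-divisor graphs, and by explicit multiplication tables certified with Light's test, while the negative direction is handled in the later sections by the condition $\star$ together with case-by-case associativity failures for the graphs that satisfy $\star$ but are not zero-divisor graphs. The only cosmetic difference is your suggestion to also prune with the degree lemmas of Section~\ref{section2}, which the paper states but does not lean on in the enumeration.
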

\begin{proof} The proof is given in the following examples in this section.
\end{proof}
\begin{example}
 $G270-G272,$  are zero-divisor graphs. ($G270$ is a star graph. $G271$ and $G272$ are bi-star graphs).
 \end{example}
 \begin{example}$G314-G317$ are zero-divisor graphs (complete graphs with three vertices emanating ends).
 \end{example}
 \begin{example} Let $G319$ be a graph with $V(G)=\{1,2,3,4,5,6,7\}$, and $E(G)$ is defined by $N(1)=\{2,3\},$  $N(2)=N(3)=\{1,4\},$ $N(4)=\{2,3,5,6,7\},$ $N(5)=N(6)=N(7)=\{4\}$.  Then $G$ is a zero-divisor graph since $G$ is a complete bipartite graph together with ends emanating from the same vertex.

 \begin{center}G319
\begin{tabular}{|l|c|r|r|r|r|r|r|}
\hline
$\bullet$&$1$&$2$&$3$&$4$&$5$&$6$&$7$  \\ \hline
$1$&1&0&$0$&$4$&4&4&4  \\ \hline

$2$&0&2&$2$&$0$&2&2&2  \\ \hline
$3$&0&$2$&2&$0$&$2$&2&2  \\ \hline
$4$&$4$&$0$&$0$&0&$0$&0&$0$  \\ \hline
$5$&4&2&$2$&$0$&2&2&2  \\ \hline
$6$&4&2&$2$&$0$&2&2&2  \\ \hline
$7$&4&2&$2$&$0$&2&2&2  \\ \hline

\end{tabular}
\end{center}
\end{example}
\begin{example}$G379$ is a  zero-divisor graph because it is the refinement of a star graph with seven vertices..
 \end{example}
 \begin{example}Let $G380$ be a connected graph with six vertices $V(G)=\{1,2,3,4,5,6,7\}$   and the edges set  $E(G)$ is defined by the following way,
$N(1)=\{2\},$  $N(2)=\{1,3,6,7\},$ $N(3)=\{2,4,5,6,7\},$ $N(4)=N(5)=\{3\},$ $N(6)=N(7)=\{2,3\}$.
$G380$ is a zero-divisor graph.
  \begin{center}G380
\begin{tabular}{|l|c|r|r|r|r|r|r|}
\hline
$\bullet$&$1$&$2$&$3$&$4$&$5$&$6$&$7$  \\ \hline
$1$&1&0&$3$&$7$&7&7&7  \\ \hline

$2$&0&2&$0$&$2$&2&0&0  \\ \hline
$3$&3&$0$&3&$0$&$0$&0&0  \\ \hline
$4$&$7$&$2$&$0$&4&$4$&7&$7$  \\ \hline
$5$&$7$&$2$&$0$&4&$4$&7&$7$  \\ \hline

$6$&7&0&$0$&$7$&7&7&7  \\ \hline
$7$&7&0&$0$&$7$&7&7&7  \\ \hline

\end{tabular}
\end{center}
 \end{example}
 
 \begin{example} $G381$ is a zero-divisor graph.
 \begin{center} G381
\begin{tabular}{|l|c|r|r|r|r|r|r|}
\hline
$\bullet$&$a$&$b$&$c$&$x$&$y$&$z$&$w$  \\ \hline
$a$&$a$&$a$&$a$&0&0&$0$&$a$  \\ \hline

$b$&$a$&$a$&$a$&0&$0$&$x$&$a$  \\ \hline
$c$&$a$&$a$&$c$&$0$&$y$&$y$&$c$  \\ \hline
$x$&0&$0$&$0$&0&$0$&$x$&0  \\ \hline
$y$&0&$0$&$y$&$0$&$y$&$y$&$y$  \\ \hline
$z$&$0$&$x$&$y$&$x$&$y$&$z$&$y$  \\ \hline
$w$&0&0&$y$&0&$y$&$y$&$w$\\ \hline

\end{tabular}
\end{center}
 \end{example}
  \begin{example} $G382$ is a zero-divisor graph.
 \begin{center} G382
\begin{tabular}{|l|c|r|r|r|r|r|r|}
\hline
$\bullet$&$a$&$b$&$c$&$x$&$y$&$z$&$w$  \\ \hline
$a$&$0$&$z$&$z$&0&0&$0$&$z$  \\ \hline

$b$&$z$&$b$&$b$&0&$z$&$z$&$b$  \\ \hline
$c$&$z$&$b$&$b$&$0$&$z$&$z$&$b$  \\ \hline
$x$&0&$0$&$0$&$x$&$x$&$0$&0  \\ \hline
$y$&0&$z$&$z$&$x$&$x$&$0$&$z$  \\ \hline
$z$&$0$&$z$&$z$&$0$&$0$&$0$&$z$  \\ \hline
 $w$&$z$&$b$&$b$&$0$&$z$&$z$&$b$\\ \hline

\end{tabular}
\end{center}
 \end{example}
 \begin{example}$G384$ is a zero-divisor graph by applying \cite[Lemma 3.14]{DGSW} to $G113$ since $5^2\not=0$.
 \begin{center}G384
\begin{tabular}{|l|c|r|r|r|r|r|r|}
\hline
$\bullet$&$1$&$2$&$3$&$4$&$5$&$6$&$7$  \\ \hline
$1$&4&0&$0$&$4$&2&2&4  \\ \hline

$2$&0&0&$0$&$0$&2&2&0  \\ \hline
$3$&0&$0$&3&$0$&$3$&3&3  \\ \hline
$4$&$4$&$0$&$0$&4&$0$&0&$4$  \\ \hline
$5$&2&2&$3$&$0$&5&5&3  \\ \hline
$6$&2&2&3&0&5&5&$3$  \\ \hline
$7$&4&0&3&$4$&3&$3$&7  \\ \hline

\end{tabular}
\end{center}
\end{example}
  \begin{example}
 $G388$ is a refinement of a star graph and hence a zero divisor graph.
 \end{example}
 \begin{example} $G390$ is a zero-divisor graph.
 \begin{center} G390
\begin{tabular}{|l|c|r|r|r|r|r|r|}
\hline
$\bullet$&$a$&$b$&$c$&$x$&$y$&$z$&$w$  \\ \hline
$a$&$0$&$a$&$a$&0&0&$0$&$a$  \\ \hline

$b$&$a$&$b$&$b$&0&$0$&$$a$$&$b$  \\ \hline
$c$&$a$&$b$&$b$&$0$&$x$&$a$&$b$  \\ \hline
$x$&0&$0$&$0$&0&$x$&$0$&$0$  \\ \hline
$y$&0&$0$&$x$&$x$&$y$&$x$&$x$  \\ \hline
$z$&$0$&$a$&$a$&$0$&$x$&$0$&$a$  \\ \hline
$w$&$a$&$b$&$b$&$0$&$x$&$a$&$b$\\ \hline

\end{tabular}
\end{center}
 \end{example}
 \begin{example}The connected graph $G392$  with $V(G)=\{1,2,3,4,5,6,7\}$  and $E(G)$ defined by $N(1)=N(2)=\{3\},\, N(3)=\{1,2,4,5,6\},$ $N(4)=N(5)=N(6)=\{3,7\},$ $N(7)=\{4,5,6\}$. 
 $G392$ is a zero-divisor graph by \cite[Theorem IV.4]{sauer}.
 \begin{center}G392
\begin{tabular}{|l|c|r|r|r|r|r|r|}
\hline
$\bullet$&$1$&$2$&$3$&$4$&$5$&$6$&$7$  \\ \hline
$1$&4&4&$0$&$4$&4&4&3  \\ \hline
$2$&4&4&$0$&$4$&4&4&3  \\ \hline

$3$&0&$0$&0&$0$&$0$&0&3  \\ \hline
$4$&$4$&$4$&$0$&4&$4$&4&$0$  \\ \hline
$5$&$4$&$4$&$0$&4&$4$&4&$0$  \\ \hline
$6$&$4$&$4$&$0$&4&$4$&4&$0$  \\ \hline

$7$&3&3&3&$0$&0&$0$&7  \\ \hline

\end{tabular}
\end{center}
 \end{example}
 
\begin{example}
 $G393$ is a zero-divisor graph.  $G393$ can be defined by the following way: $N(1)=\{2\},$ $N(2)=\{1,3,5,7\},$ $N(3)=\{2,4\},$ $N(4)=\{3,5\},$ $N(4)=\{3,5\},$ $N(5)=\{2,4,6,7\},$ $N(6)=\{5\},$ $N(7)=\{2,5\}.$ 
 
 We define the multiplication table in the following way. One can check the associativity by Light's test.
 
  \begin{center}G393
\begin{tabular}{|l|c|r|r|r|r|r|r|}
\hline
$\bullet$&$1$&$2$&$3$&$4$&$5$&$6$&$7$  \\ \hline
$1$&3&0&$3$&$2$&5&7&5  \\ \hline

$2$&0&0&$0$&$2$&0&2&0  \\ \hline
$3$&3&$0$&3&$0$&$5$&5&5  \\ \hline
$4$&$2$&$2$&$0$&4&$0$&4&$2$  \\ \hline
$5$&5&0&$5$&$0$&0&0&0  \\ \hline
$6$&7&2&5&4&0&4&$2$  \\ \hline
$7$&5&0&5&$2$&0&$2$&0  \\ \hline

\end{tabular}
\end{center}
 
 \end{example}
 
\begin{example} Let $G411$ be a graph with $V(G)=\{a,b,c,x,y,z,w\}$, and $E(G)$ is defined by $N(a)=\{x,y,z,w\},$  $N(b)=N(c)=\{x,y\},$ $N(x)=N(y)=\{a,b,c\},$ $N(z)=N(w)=\{a\}$.  Then $G$ is a zero-divisor graph since $G$ is a complete bipartite graph together with ends emanating from the same vertex.
$G411$ is a zero-divisor graph by \cite[Theorem IV.4]{sauer} (bi-partite graph).
 \end{example}

 \begin{example}The connected graph $G473$ which is defined by  $N(a)=\{b,c, x,y, z,w\},$ $N(b)=\{a,c,x\},$ $N(c)=\{a,b, x\},$ $N(x)=\{a,b,c\},$ $N(y)=N(z)=N(w)=\{a\}$ is a zero-divisor graph because it is a refinement of a star graph.
\end{example}
\begin{example} Let $G474$ be a graph with $V(G)=\{a,b,c,x,y,z,w\}$, and $E(G)$ is defined by $N(a)=\{c,x,y,z,\},$  $N(b)=N(w)=\{x\},$ $N(c)=\{a,x,y\},$ $N(x)=\{a,b,c,w,y\},$ $N(y)=\{a,c,x\},$ $N(z)=\{a\}$  Then $G$ is a zero-divisor graph since $G$ is a complete  graph together with ends emanating from one of the  vertices.
\end{example}
\begin{example} $G476$ is a refinement of a star graph and hence a zero divisor graph.
 \end{example}
 \begin{example} $G477$ is a zero-divisor graph.
 \begin{center}G477
\begin{tabular}{|l|c|r|r|r|r|r|r|}
\hline
$\bullet$&$1$&$2$&$3$&$4$&$5$&$6$&$7$  \\ \hline
$1$&1&0&$3$&$4$&3&3&3  \\ \hline

$2$&0&2&$0$&$0$&2&0&0  \\ \hline
$3$&3&$0$&3&$0$&$3$&3&3  \\ \hline
$4$&$4$&$0$&$0$&4&$0$&0&$0$  \\ \hline
$5$&3&2&$3$&$0$&5&3&3  \\ \hline
$6$&3&0&$3$&$0$&3&3&3  \\ \hline
$7$&3&0&$3$&$0$&3&3&3  \\ \hline

\end{tabular}
\end{center}
\end{example}
\begin{example}
$G478$ is a refinement of a star graph and hence a zero divisor graph.
\end{example}
\begin{example}
$G479$ is a zero-divisor graph.
\begin{center} G479
\begin{tabular}{|l|c|r|r|r|r|r|r|}
\hline
$\bullet$&$a$&$b$&$c$&$x$&$y$&$z$&$w$  \\ \hline
$a$&$0$&$a$&$a$&0&0&$0$&$a$  \\ \hline

$b$&$a$&$b$&$b$&0&$0$&$$a$$&$b$  \\ \hline
$c$&$a$&$b$&$c$&$x$&$0$&$a$&$b$  \\ \hline
$x$&0&$0$&$x$&$x$&$0$&$0$&$0$  \\ \hline
$y$&0&$0$&$0$&$0$&$y$&$y$&$y$  \\ \hline
$z$&$0$&$a$&$a$&$0$&$y$&$y$&$z$  \\ \hline
$w$&$a$&$b$&$b$&$0$&$y$&$z$&$w$\\ \hline

\end{tabular}
\end{center}
\end{example}
\begin{example}Let $G480$ be a connected graph with six vertices $V(G)=\{a,b,c,x,y,z,w\}$   and the edges set  $E(G)$ is defined by the following way,
$N(a)=\{x,y,z\},$ $N(b)=N(c)=\{x,y\},$  $N(x)=\{a,b,c,w,y\},$ $N(y)=\{a,b,c,x\},$  $N(z)=\{a\}, N(w)=\{x\}$.
 $G480$ is a zero-divisor graph (combining $G113$ and \cite[Lemma 3.14]{DGSW} since $b^2\not=0$).
\begin{center} G480
\begin{tabular}{|l|c|r|r|r|r|r|r|}
\hline
$\bullet$&$a$&$b$&$c$&$x$&$y$&$z$&$w$  \\ \hline
$a$&$a$&$a$&$a$&0&0&$0$&$a$  \\ \hline

$b$&$a$&$a$&$a$&0&$0$&$x$&$a$  \\ \hline
$c$&$a$&$a$&$a$&$0$&$0$&$x$&$a$  \\ \hline
$x$&0&$0$&$0$&$0$&$0$&$x$&$0$  \\ \hline
$y$&0&$0$&$0$&$0$&$y$&$y$&$y$  \\ \hline
$z$&$0$&$x$&$x$&$x$&$y$&$z$&$y$  \\ \hline
$w$&$a$&$a$&$a$&$0$&$y$&$y$&$w$\\ \hline

\end{tabular}
\end{center}
\end{example}
\begin{example}Let $G481$ be a connected graph with six vertices $V(G)=\{a,b,c,x,y,z\}$   and the edges set  $E(G)$ is defined by the following way,
$N(a)=\{x,y,z\},$ $N(b)=\{x,y\},$ $N(c)=\{y\},$  $N(x)=\{a,b,y,z\},$ $N(y)=\{a,b,c,w,x\},$  $N(z)=\{a,x\}$.

  $G481$ is a zero-divisor graph $G481$ by checking the following multiplication table.
\begin{center}G481
\begin{tabular}{|l|c|r|r|r|r|r|r|}
\hline
$\bullet$&$a$&$b$&$c$&$x$&$y$&$z$&$w$  \\ \hline
$a$&$0$&$a$&$a$&0&0&$0$&$a$  \\ \hline
$b$&$a$&$b$&$b$&0&0&$a$&$b$  \\ \hline

$c$&$a$&$b$&$c$&$x$&$0$&$a$&$c$  \\ \hline
$x$&0&$0$&$x$&$x$&$0$&$0$&$x$  \\ \hline
$y$&0&$0$&$0$&$0$&$y$&$y$&$0$  \\ \hline
$z$&$0$&$a$&$a$&$0$&$y$&$y$&$a$  \\ \hline
$w$&$a$&$b$&$c$&$x$&$0$&$a$&$c$\\ \hline

\end{tabular}
\end{center}
\end{example}

 \begin{example}
 $G483$ is a zero-divisor graph (combining $G119$ and \cite[Lemma 3.14]{DGSW} because $y^2\not=0$).
\begin{center} G483
\begin{tabular}{|l|c|r|r|r|r|r|r|}
\hline
$\bullet$&$a$&$b$&$c$&$x$&$y$&$z$&$w$  \\ \hline
$a$&$0$&$z$&$z$&0&0&$0$&$0$  \\ \hline

$b$&$z$&$b$&$b$&0&$z$&$z$&$z$  \\ \hline
$c$&$z$&$b$&$b$&$0$&$z$&$z$&$z$  \\ \hline
$x$&0&$0$&$0$&$x$&$x$&$0$&$x$  \\ \hline
$y$&0&$z$&$z$&$x$&$x$&$0$&$x$  \\ \hline
$z$&$0$&$z$&$z$&$0$&$0$&$0$&$0$  \\ \hline
 $w$&0&$z$&$z$&$x$&$x$&$0$&$x$  \\ \hline

\end{tabular}
\end{center}
\end{example}
\begin{example}
 $G485$ is a zero-divisor graph.
 \begin{center} G485
\begin{tabular}{|l|c|r|r|r|r|r|r|}
\hline
$\bullet$&$a$&$b$&$c$&$x$&$y$&$z$&$w$  \\ \hline
$a$&$a$&$a$&$a$&0&0&$0$&$a$  \\ \hline

$b$&$a$&$b$&$b$&0&$0$&$z$&$b$  \\ \hline
$c$&$a$&$b$&$b$&$0$&$x$&$z$&$b$  \\ \hline
$x$&0&$0$&$0$&$0$&$x$&$0$&$0$  \\ \hline
$y$&0&$0$&$x$&$x$&$y$&$0$&$x$  \\ \hline
$z$&$0$&$z$&$z$&$0$&$0$&$z$&$z$  \\ \hline
 $w$&$a$&$b$&$b$&$0$&$x$&$z$&$b$\\ \hline

\end{tabular}
\end{center}
 \end{example}
 \begin{example}
 $G486$ is a zero-divisor graph.
 \begin{center} G486
\begin{tabular}{|l|c|r|r|r|r|r|r|}
\hline
$\bullet$&$a$&$b$&$c$&$x$&$y$&$z$&$w$  \\ \hline
$a$&$a$&$a$&$a$&0&0&$0$&$a$  \\ \hline

$b$&$a$&$b$&$a$&0&$0$&$$0$$&$b$  \\ \hline
$c$&$a$&$a$&$c$&$0$&$z$&$z$&$c$  \\ \hline
$x$&0&$0$&$0$&$x$&$x$&$0$&$0$  \\ \hline
$y$&0&$0$&$z$&$x$&$y$&$z$&$z$  \\ \hline
$z$&$0$&$0$&$z$&$0$&$z$&$z$&$z$  \\ \hline
$w$&$a$&$b$&$c$&$0$&$z$&$z$&$w$\\ \hline

\end{tabular}
\end{center}
 \end{example}
 \begin{example}
 $G493$ is a zero-divisor graph.
 \begin{center}G493
\begin{tabular}{|l|c|r|r|r|r|r|r|}
\hline
$\bullet$&$1$&$2$&$3$&$4$&$5$&$6$&$7$  \\ \hline
$1$&1&0&3&$5$&5&5&3  \\ \hline

$2$&0&2&$0$&2&0&0&2  \\ \hline
$3$&3&$0$&3&$0$&$0$&0&3  \\ \hline
$4$&$5$&2&$0$&4&5&5&2  \\ \hline
$5$&5&0&$0$&5&5&5&0  \\ \hline
$6$&5&0&$0$&5&5&5&0 \\ \hline
$7$&3&2&3&2&0&0& 7 \\ \hline

\end{tabular}
\end{center}
 
 \end{example}
\begin{example} $G503$ is a  refinement of star graph hence it is a zero divisor graph.
 \end{example}
 \begin{example}The connected graph $G507$  with $V(G)=\{a_1,a_2,b_1,b_2,b_3,b_4,x_1\}$  and $E(G)$ defined by $N(a_1)=\{b_1,b_2,b_3, b_4,x_1\},\, N(a_2)=\{b_1,b_2,b_3,b_4\},$ $N(b_1)=N(b_2)=N(b_3)=N(b_4)=\{a_1,a_2\},$ $N(x_1)=\{a_1\}$  is a zero-divisor graph by \cite[Theorem IV.4]{sauer}.
\end{example}
 \begin{example} $G513$ is a zero-divisor graph.
 \begin{center} G513
\begin{tabular}{|l|c|r|r|r|r|r|r|}
\hline
$\bullet$&$a$&$b$&$c$&$x$&$y$&$z$&$w$  \\ \hline
$a$&$0$&$a$&$a$&0&0&$0$&$a$  \\ \hline

$b$&$a$&$b$&$b$&0&$0$&$a$&$b$  \\ \hline
$c$&$a$&$b$&$c$&$0$&$0$&$a$&$b$  \\ \hline
$x$&0&$0$&$0$&0&$x$&$0$&$0$  \\ \hline
$y$&0&$0$&$0$&$x$&$y$&$x$&$x$  \\ \hline
$z$&$0$&$a$&$a$&$0$&$x$&$0$&$a$  \\ \hline
$w$&$a$&$b$&$b$&$0$&$x$&$a$&$b$\\ \hline

\end{tabular}
\end{center}
\end{example}
 \begin{example}
$G522$ is a zero-divisor graph.
\begin{center}G522
\begin{tabular}{|l|c|r|r|r|r|r|r|}
\hline
$\bullet$&$a$&$b$&$c$&$x$&$y$&$z$&$w$  \\ \hline
$a$&$0$&$a$&$a$&0&0&$0$&0  \\ \hline

$b$&$a$&$b$&$b$&0&$0$&$$a$$&$a$  \\ \hline
$c$&$a$&$b$&$c$&$0$&$0$&$a$&$a$  \\ \hline
$x$&0&$0$&$0$&0&$x$&$0$&$x$  \\ \hline
$y$&0&$0$&$0$&$x$&$y$&$x$&$y$  \\ \hline
$z$&$0$&$a$&$a$&$0$&$x$&$0$&$x$  \\ \hline
$w$&0&$a$&$a$&$x$&$y$&$x$&$y$\\ \hline

\end{tabular}
\end{center}
Another way to prove that $G522$ is a zero-divisor graph is by combining $G118$ and \cite[Lemma 3.14]{DGSW}.
\end{example}
 
 \begin{example}The connected graph $G525$  with $V(G)=\{a_1,a_2,b_1,b_2,b_3,b_4,x_1\}$  and $E(G)$ defined by $N(a_1)=N(a_2)=\{b_1,b_2,b_3, b_4\}$, $N(b_1)=\{a_1,a_2, x_1\}$, $N(b_2)=N(b_3)=N(b_4)=\{a_1,a_2\},$ $N(x_1)=\{b_1\}$  is a zero-divisor graph by \cite[Theorem IV.4]{sauer}.
\end{example}
\begin{example} $G551$ is a refinement of a star graph and hence a zero-divisor graph.
 \end{example}
 \begin{example} $G598$ is refinement of a star graph and hence a zero-divisor graph.
\end{example}
\begin{example}
$G599$ is a zero-divisor graph.
\begin{center}G599
\begin{tabular}{|l|c|r|r|r|r|r|r|}
\hline
$\bullet$&$1$&$2$&$3$&$4$&$5$&$6$&$7$  \\ \hline
$1$&1&0&3&$4$&3&6&6  \\ \hline

$2$&0&2&$0$&0&2&0&0  \\ \hline
$3$&3&$0$&3&$0$&$3$&6&6  \\ \hline
$4$&$4$&0&$0$&4&0&0&0  \\ \hline
$5$&3&2&$3$&0&5&6&6  \\ \hline
$6$&6&0&$6$&0&6&0&0 \\ \hline
$7$&6&0&6&0&6&0& 0 \\ \hline

\end{tabular}
\end{center}
 
\end{example}
\begin{example}
$G601$ is a zero-divisor graph.
\begin{center} G601
\begin{tabular}{|l|c|r|r|r|r|r|r|}
\hline
$\bullet$&$a$&$b$&$c$&$x$&$y$&$z$&$w$  \\ \hline
$a$&$a$&$a$&$a$&0&0&$0$&$a$  \\ \hline
$b$&$a$&$b$&$a$&$x$&$0$&$0$&$a$  \\ \hline

$c$&$a$&$a$&$c$&$0$&$y$&$z$&$c$  \\ \hline
$x$&0&$x$&$0$&$x$&$0$&$0$&$0$  \\ \hline
$y$&0&$0$&$y$&$0$&$y$&$0$&$y$  \\ \hline
$z$&$0$&$0$&$z$&$0$&$0$&$z$&$z$  \\ \hline
$w$&$a$&$a$&$c$&$0$&$y$&$z$&$c$\\ \hline

\end{tabular}
\end{center}
\end{example}
\begin{example}
$G603$ is a zero-divisor graph because it is a refinement of a star graph.
\end{example}
\begin{example}
Let $G604$ be a connected graph with seven vertices $V(G)=\{1,2,3,4,5,6,7\}$   and the edges set  $E(G)$ is defined by the following way,
$N(1)=\{2\},$ $N(2)=\{1,3,5,6\},$  $N(3)=N(5)=\{2,4,6\},$ $N(4)=\{3,5,6\}$, $N(6)=\{2,3,4,5,7\}$, and $N(7)=\{6\}.$
  $G604$ is a zero-divisor graph.

\begin{center}G604
\begin{tabular}{|l|c|r|r|r|r|r|r|}
\hline
$\bullet$&$1$&$2$&$3$&$4$&$5$&$6$&$7$  \\ \hline
$1$&1&$0$&$5$&6&5&6&$5$  \\ \hline

$2$&$0$&2&$0$&2&0&0&$2$  \\ \hline
$3$&5&$0$&5&$0$&$5$&0&5  \\ \hline
$4$&6&$2$&$0$&2&0&0&2  \\ \hline
$5$&$5$&$0$&$5$&$0$&5&$0$&5  \\ \hline
$6$&6&0&0&0&$0$&0&$0$ \\ \hline
$7$&$5$&$2$&$5$&$2$&$5$&$0$&7\\ \hline
\end{tabular}
\end{center}
\end{example}
\begin{example}
$G606$ is a zero-divisor graph (combining  $G159$ and \cite[Lemma 3.14]{DGSW}).
\begin{center} G606
\begin{tabular}{|l|c|r|r|r|r|r|r|}
\hline
$\bullet$&$a$&$b$&$c$&$x$&$y$&$z$&$w$  \\ \hline
$a$&$a$&$a$&$a$&0&0&$0$&$a$  \\ \hline
$b$&$a$&$b$&$a$&0&0&$0$&$a$  \\ \hline

$c$&$a$&$a$&$c$&$0$&$z$&$z$&$c$  \\ \hline
$x$&0&$0$&$0$&$x$&$x$&$0$&$0$  \\ \hline
$y$&0&$0$&$z$&$x$&$x$&$0$&$z$  \\ \hline
$z$&$0$&$0$&$z$&$0$&$0$&$0$&$z$  \\ \hline
$w$&$a$&$a$&$c$&$0$&$z$&$z$&$c$\\ \hline

\end{tabular}
\end{center}
\end{example}
\begin{example} $G612-G613$ are zero-divisor graphs because they are the refinement of star graphs.
\end{example}
\begin{example} $G614$ is a zero-divisor graph (combining  $G137$  and \cite[Lemma 3.14]{DGSW} because $b^2\not=0$.)
\begin{center} G614
\begin{tabular}{|l|c|r|r|r|r|r|r|}
\hline
$\bullet$&$a$&$b$&$c$&$x$&$y$&$z$&$w$  \\ \hline
$a$&$0$&$a$&$a$&0&0&$0$&$a$  \\ \hline

$b$&$a$&$b$&$b$&0&$0$&$$a$$&$b$  \\ \hline
$c$&$a$&$b$&$c$&$x$&$0$&$a$&$b$  \\ \hline
$x$&0&$0$&$x$&$x$&$0$&$0$&$0$  \\ \hline
$y$&0&$0$&$0$&$0$&$y$&$y$&$0$  \\ \hline
$z$&$0$&$a$&$a$&$0$&$y$&$y$&$a$  \\ \hline
$w$&$a$&$b$&$b$&$0$&$0$&$a$&$b$\\ \hline

\end{tabular}
\end{center}
\end{example}
\begin{example} $G616$ is a zero-divisor graph (combining $G137$ and \cite[Lemma 3.14]{DGSW} since $z\times z\ne 0.$.)
\begin{center} G616
\begin{tabular}{|l|c|r|r|r|r|r|r|}
\hline
$\bullet$&$a$&$b$&$c$&$x$&$y$&$z$&$w$  \\ \hline
$a$&$0$&$a$&$a$&0&0&$0$&$0$  \\ \hline

$b$&$a$&$b$&$b$&0&$0$&$$a$$&$a$  \\ \hline
$c$&$a$&$b$&$c$&$x$&$0$&$a$&$a$  \\ \hline
$x$&0&$0$&$x$&$x$&$0$&$0$&$0$  \\ \hline
$y$&0&$0$&$0$&$0$&$y$&$y$&$y$  \\ \hline
$z$&$0$&$a$&$a$&$0$&$y$&$y$&$y$  \\ \hline
$w$&$0$&$a$&$a$&$0$&$y$&$y$&$y$\\ \hline

\end{tabular}
\end{center}
\end{example}
\begin{example} $G618-G619$ are zero-divisor graphs because they are the refinement of star graphs.
\end{example}
\begin{example}$G620$ is a zero-divisor graph (combining  $G166$  and \cite[Lemma 3.14]{DGSW}.)
\begin{center} G620
\begin{tabular}{|l|c|r|r|r|r|r|r|}
\hline
$\bullet$&$a$&$b$&$c$&$x$&$y$&$z$&$w$  \\ \hline
$a$&$a$&$a$&$a$&0&0&$0$&$a$  \\ \hline
$b$&$a$&$a$&$a$&0&$x$&$0$&$a$  \\ \hline

$c$&$a$&$a$&$c$&$0$&$0$&$z$&$c$  \\ \hline
$x$&0&$0$&$0$&$0$&$x$&$0$&$0$  \\ \hline
$y$&0&$x$&$0$&$x$&$y$&$0$&$x$  \\ \hline
$z$&$0$&$0$&$z$&$0$&$0$&$z$&$z$  \\ \hline
$w$&$a$&$a$&$c$&$0$&$x$&$z$&$c$\\ \hline

\end{tabular}
\end{center}
\end{example}
\begin{example}
Let $G624$ be a connected graph with six vertices $V(G)=\{a,b,c,x,y,z,w\}$   and the edges set  $E(G)$ is defined by the following way,
$N(a)=\{x,y,z\},$ $N(b)=\{x,z\},$  $N(c)=\{x,y\},$  $N(x)=\{a,b,c,z\},$ $N(y)=\{a,c,z\}$, $N(z)=\{a,b,x,y,w\}$, and $N(w)=\{z\}.$
  $G624$ is a zero-divisor graph.
\begin{center}G624
\begin{tabular}{|l|c|r|r|r|r|r|r|}
\hline
$\bullet$&$a$&$b$&$c$&$x$&$y$&$z$&$w$  \\ \hline
$a$&$a$&$a$&$a$&0&0&$0$&$a$  \\ \hline
$b$&$a$&$a$&$a$&0&$x$&$0$&$b$  \\ \hline

$c$&$a$&$a$&$c$&$0$&$0$&$z$&$a$  \\ \hline
$x$&0&$0$&$0$&$0$&$x$&$0$&$x$  \\ \hline
$y$&0&$x$&$0$&$x$&$y$&$0$&$y$  \\ \hline
$z$&$0$&$0$&$z$&$0$&$0$&$z$&$0$  \\ \hline
$w$&$a$&$b$&$a$&$x$&$y$&$0$&$w$\\ \hline

\end{tabular}
\end{center}

\end{example}
\begin{example}Let $G626$ be a connected graph with six vertices $V(G)=\{a,b,c,x,y,z\}$   and the edges set  $E(G)$ is defined by the following way,
$N(a)=\{c,x,y\},$ $N(b)=\{y,z\},$ $N(c)=\{a,x,y\},$ $N(x)=\{a,c,y,z,w\},$ $N(y)=\{a,b,c,x\},$   $N(z)=\{b,x\}$, and $N(w)=\{x\}.$

$G626$ is a zero-divisor graph.
\begin{center} G626
\begin{tabular}{|l|c|r|r|r|r|r|r|}
\hline
$\bullet$&$a$&$b$&$c$&$x$&$y$&$z$&$w$  \\ \hline
$a$&$0$&$x$&$0$&0&0&$y$&$y$  \\ \hline

$b$&$x$&$b$&$x$&$x$&$0$&$0$&$x$  \\ \hline
$c$&$0$&$x$&$0$&$0$&$0$&$y$&$y$  \\ \hline
$x$&0&$x$&$0$&$0$&$0$&$0$&0  \\ \hline
$y$&0&$0$&$0$&$0$&$0$&$y$&$y$  \\ \hline
$z$&$y$&$0$&$y$&$0$&$y$&$z$&$z$  \\ \hline
$w$&$y$&$x$&$y$&$0$&$y$&$z$&$z$\\ \hline

\end{tabular}
\end{center}
\end{example}
\begin{example} $G629$ is a zero-divisor graph.
\begin{center} G629
\begin{tabular}{|l|c|r|r|r|r|r|r|}
\hline
$\bullet$&$a$&$b$&$c$&$x$&$y$&$z$&$w$  \\ \hline
$a$&$b$&$b$&$b$&0&0&$0$&$b$  \\ \hline
$b$&$b$&$b$&$b$&0&0&$0$&$b$  \\ \hline

$c$&$b$&$b$&$b$&$0$&$0$&$x$&$b$  \\ \hline
$x$&0&$0$&$0$&$0$&$0$&$x$&$0$  \\ \hline
$y$&0&$0$&$0$&$0$&$y$&$y$&$y$  \\ \hline
$z$&$0$&$0$&$x$&$x$&$y$&$z$&$y$  \\ \hline
$w$&$b$&$b$&$b$&$0$&$y$&$y$&$w$\\ \hline

\end{tabular}
\end{center}
\end{example}
\begin{example}$G631$ is a zero-divisor graph.
\begin{center} G631
\begin{tabular}{|l|c|r|r|r|r|r|r|}
\hline
$\bullet$&$a$&$b$&$c$&$x$&$y$&$z$&$w$  \\ \hline
$a$&$0$&$a$&$a$&0&0&$0$&$a$  \\ \hline

$b$&$a$&$b$&$b$&0&$0$&$$a$$&$b$  \\ \hline
$c$&$a$&$b$&$b$&$0$&$0$&$a$&$b$  \\ \hline
$x$&0&$0$&$0$&$0$&$x$&$0$&$0$  \\ \hline
$y$&0&$0$&$0$&$x$&$y$&$0$&$x$  \\ \hline
$z$&$0$&$a$&$a$&$0$&$0$&$0$&$a$  \\ \hline
$w$&$a$&$b$&$b$&$0$&$x$&$a$&$b$\\ \hline

\end{tabular}
\end{center}
\end{example}
\begin{example}
$G633$ is a zero-divisor graph.
\begin{center}G633
\begin{tabular}{|l|c|r|r|r|r|r|r|}
\hline
$\bullet$&$1$&$2$&$3$&$4$&$5$&$6$&$7$  \\ \hline
$1$&1&$0$&3&7&5&3&7  \\ \hline

$2$&$0$&2&$0$&2&0&2&$0$  \\ \hline
$3$&3&$0$&0&$0$&$3$&0&0  \\ \hline
$4$&7&$2$&$0$&4&0&2&7 \\ \hline
$5$&$5$&$0$&$3$&$0$&5&$3$&0  \\ \hline
$6$&3&2&0&2&$3$&2&0 \\ \hline
$7$&7&$0$&0&7&$0$&$0$&7\\ \hline
\end{tabular}
\end{center}
\end{example}
\begin{example}$G639$ is a zero-divisor graph by \cite[IV.1]{sauer} and \cite[Lemma 3.14]{DGSW}.
\begin{center}G639
\begin{tabular}{|l|c|r|r|r|r|r|r|}
\hline
$\bullet$&$1$&$2$&$3$&$4$&$5$&$6$&$7$  \\ \hline
$1$&1&$0$&0&1&1&1&0  \\ \hline

$2$&$0$&2&$0$&0&2&0&$2$  \\ \hline
$3$&0&$0$&3&$3$&$0$&3&0  \\ \hline
$4$&1&$0$&$3$&4&1&4&0 \\ \hline
$5$&$1$&$2$&$0$&$1$&5&$1$&2  \\ \hline
$6$&1&0&3&4&$1$&4&0 \\ \hline
$7$&0&$2$&0&0&$2$&$0$&2\\ \hline
\end{tabular}
\end{center}

\end{example}
\begin{example}The connected graph $G667$  with $V(G)=\{a_1,a_2,b_1,b_2,b_3,b_4,x_1\}$  and $E(G)$ defined by $N(a_1)=N(a_2)=N(a_3)=\{b_1,b_2,b_3\}$, $N(b_1)=\{a_1,a_2, a_3,x_1\}$, $N(b_2)=N(b_3)=\{a_1,a_2,a_3\},$ $N(x_1)=\{b_1\}$  is a zero-divisor graph by \cite[IV.4]{sauer}. 
\end{example}

\begin{example} $G668$ is a zero-divisor graph because it is a refinement of star graph.
\end{example}
\begin{example} The connected graph $G670$ which is defined by  $N(a)=N(b)=\{c,d,e,f,g\},$ $N(c)=N(d)=N(e)=N(f)=N(g)=\{a,b\}$ is a zero-divisor graph by \cite[IV.4]{sauer}.
\end{example}

\begin{example} $G671-G672$ are zero-divisor graphs because they are the refinement of star graphs.
\end{example}
\begin{example} $G678$ is a zero-divisor graph by applying \cite[Lemma 3.14]{DGSW} to $G119$ because $b^2\not=0$.
\begin{center} G678

\end{center}
\end{example}
\begin{example} The connected graph $G1007$ which is defined by $N(a)=N(b)=N(c)=\{d,e,f,g\},$ $N(d)=N(e)=N(f)=N(g)=\{a,b,c\}$ is a zero-divisor graph because it is a bipartite graph.
\end{example}
\begin{example} $G1008-G1009$ are zero-divisor graph because they are the refinement of the star graph.
\end{example}
\begin{example} $G1012$ is a zero-divisor graph.
\end{example}
\begin{example} $G1013-G1015$ are zero-divisor graphs because they are the refinement of a star graph with seven vertices.
\end{example}
\begin{example} $G1016$ is a zero-divisor graph.
\begin{center}G1016

\end{center}

\end{example}
\begin{example} $G1244-G1252$ are zero-divisor graphs because they are the refinement of a star graph with seven vertices.
\end{example}

 \section {non zero-divisor graph with seven vertices that satisfies the given condition $\star$}\label{section4}
 
\subsection{a non-zero divisor graph with seven vertices produced by emanating an end from the highest degree vertex of a zero-divisor graph with six vertices produces.}

By Lemma~\ref{highest}, we know that emanating an end from the highest degree vertex of a zero-divisor graph with six vertices always produces a connected graph satisfies the condition $\star$.

\begin{theorem}\label{B} 
 
\begin{enumerate}
\item \label{graph22}$G184:$ $N(a)=\{c,x,y,z\},\, N(b)=\{x,y,z\}, N(c)=\{a,y\}, N(x)=\{a,b,y,z\},N(y)=\{a,b,c,x\}, N(z)=\{a,b,x\}$
\item \label{graph23}$G194:$ $N(a)=N(b)=\{c,x,y,z\},\,  N(c)=\{a,b,x,y\}, N(x)=\{a,b,c,y\},N(y)=\{a,b,c,x\},\, N(z)=\{a,b\}$
\item \label{graph24}$G204:$ $N(a)=N(b)=\{c,x,y,z\},\,  N(c)=N(z)=\{a,b,x,y\}, N(x)=\{a,b,c,z\},N(y)=\{a,b,c,z\}$ 
\end{enumerate}
are zero-divisor graphs which were proved in 24 exceptional cases in \cite{sauer}.  By adding a vertex $w$ and an edge $xw$, the resulting graph is not a zero-divisor graph.
\end{theorem}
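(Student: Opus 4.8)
The plan is to treat the three augmented graphs uniformly: first record that each one still satisfies the condition $\star$, and then rule each one out as a zero-divisor graph by a proof by contradiction driven entirely by associativity relations against the new end. Since $w$ emanates from $x$ and in each of $G184$, $G194$, $G204$ the vertex $x$ realizes the maximum degree $\Delta$, Lemma~\ref{highest} shows that adding the edge $xw$ preserves $\star$; hence the resulting graphs are genuine candidates and the entire content of the theorem is to exclude them. I will write the argument out for $G184+w$ and then note that $G194+w$ and $G204+w$ follow by the identical scheme.

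Suppose $G184+w$ were a zero-divisor graph and fix a compatible associative product, so that $uv=0$ exactly on edges; in particular $wx=0$, while $wv\in V(G)$ for every $v\neq x,w$. The engine of the proof is the identity $(wv)u=w(vu)$: whenever $vu$ is an edge we have $w(vu)=0$, so $(wv)u=0$ forces $wv\in N(u)\cup\{0\}$, together with the square-exception $wv=u$ allowed only when $u^2=0$. Intersecting this over all edge-neighbors $u$ of a fixed $v$ pins $wv$ into a very small set. Running it at $v=a$ (using $ac,ax,ay,az$) gives $wa\in\{a,0\}$, hence $wa=a$ since $w,a$ are non-adjacent; running it at $v=y$ gives $wy=y$. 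Then $w^2a=w(wa)=a\neq 0$ forces $w^2\neq 0$, while $w^2x=w(wx)=0$ forces $w^2\in N(x)$, and the constraints $w^2a=a$, $w^2y=y$ eliminate every candidate in $N(x)$ except $w$ itself, so $w^2=w$.

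The final step is to contradict every admissible value of a single remaining product. The same intersection procedure gives $wc\in\{a,c,x\}$, with the value $a$ surviving only through the square-exception. Each branch collapses: if $wc=x$, then $0=w(wc)=(ww)c=w^2c=wc=x$; if $wc=c$, then first $cx=x$ (immediate from $(cx)w=c(xw)=0$, giving $cx\in N(w)\cup\{0\}=\{x,0\}$), whence $x=(wc)x=w(cx)=wx=0$; and if $wc=a$, then $w(cz)=(wc)z=az=0$ gives $cz=x$, while $w c^2=(wc)c=ac=0$ gives $c^2\in\{0,x\}$, so associativity forces $c^2z=c(cz)=c(cx)=cx=x$, which is impossible since $c^2z\in\{xz,0\}=\{0\}$. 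In every case $x=0$, contradicting that $x$ is a vertex. Hence $G184+w$ is not a zero-divisor graph, and the same bookkeeping, with the roles of $a,c,y,z$ adjusted to the neighborhoods of $G194$ and $G204$, disposes of the remaining two.

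I expect the main obstacle to be the case analysis rather than any single clever identity. The square-exceptions $wv=u$ (and $cz=x$, $c^2=x$) widen the candidate sets and must be tracked carefully, the branch $wc=a$ closes only after producing the auxiliary relations $cz=x$ and $c^2\in\{0,x\}$, and the whole chain has to be rerun for the slightly different adjacencies of $G194$ and $G204$. In all three graphs the crux is the same structural phenomenon: the forced idempotence $w^2=w$ together with $wx=0$ makes $w$ act as a partial identity on $N(x)$, and the only way to keep this consistent with the existence of an end attached to $x$ ultimately demands $wx=x$, i.e. $x=0$.
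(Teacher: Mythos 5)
Your overall strategy (constraint propagation via associativity against the new end $w$, using the sets $D(\cdot)$ implicitly) is the same in spirit as the paper's table-filling arguments in its Examples for $G766$, $G893$ and $G1024$, and your treatment of $wa$, $wy$ and $w^2=w$ for $G184+w$ is correct. However, there is a concrete gap in the case analysis for $wc$. Intersecting $\overline{N(a)}$, $\overline{N(y)}$ and $\overline{N(x)}$ (the closed neighborhoods of $N(c)\cup N(w)=\{a,x,y\}$) gives the candidate set $\{a,x,y\}$, each entering only through the square-exception, \emph{not} $\{a,c,x\}$: the value $c$ is excluded outright because $x\notin\overline{N(c)}$, while the value $y$ is a live candidate that you never address. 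The branch $wc=y$ does not fall to the identity you use for $wc=c$ (there $(wc)x=yx=0=w(cx)$, so no contradiction arises); one has to push further, e.g.\ $w(cb)=(wc)b=yb=0$ forces $cb=x$ with $x^2=0$, and $wc^2=(wc)c=0$ forces $c^2\in\{0,x\}$, whence $c^2b=c(cb)=cx=x$ contradicts $c^2b\in\{0\cdot b, xb\}=\{0\}$. So the branch is refutable, but your proof as written does not refute it.

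The second and more serious gap is the claim that $G194+w$ and $G204+w$ "follow by the identical scheme." The scheme's first step already fails there: in $G194$ one has $N(a)\cup N(w)=\{c,x,y,z\}\subseteq\overline{N(b)}$ as well as $\overline{N(a)}$, so $wa\in\{a,b\}$ is not pinned down, and likewise $wy\in\{y,c\}$; in $G204$ one similarly gets $wa\in\{a,b\}$, $wc\in\{c,z\}$, etc. Consequently $w^2=w$ cannot be derived the same way, and the whole chain collapses. The paper's own proofs for these two graphs (Examples~\ref{eG893} and \ref{eG1024}) are structured quite differently — for $G893$ the contradiction comes from a case split on $1\times 5\in\{2,3,7\}$ each branch of which leaves some product undefinable, and for $G1024$ from showing $w^2$ admits no value after establishing $x^2=0$ and $aw,bw\in\{a,b\}$, $cw,zw\in\{c,z\}$. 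You would need to supply genuinely separate arguments for these two graphs rather than appeal to the $G184$ computation.
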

\begin{proof}
The proof is given in the following Examples~\ref{eG766},\ref{eG893}, and
\ref{eG1024}.
\end{proof}

\subsection{Emanating an end from the highest degree vertex of a graph with six vertices which satisfies the condition $\star$  but is not a zero-divisor graph}
\begin{theorem}\cite{DGSW}\label{DGSW}Up to isomorphism, there are four connected graphs with six vertices that satisfy condition $\star$ but are not the zero-divisor graphs. They are
 \begin{enumerate}
 \item \label{type1}$G98:$ $N(a)=\{b,d\},$ $N(b)=\{a,c,e\},$ $N(c)=\{b\}$,$N(d)=\{a,e\},$ $N(e)=\{b,d,f\},$ $N(f)=\{e\}.$
 \item \label{type2}$G145:$ $N(a)=\{b,d\},$ $N(b)=\{a,c,e,f\},$ $N(c)=N(f)=\{b,e\},$ $N(d)=\{a,e\},$ $N(e)=\{b,c,d,f\}.$
 \item \label{type3}$G163:$ $N(a)=\{b,f\},$ $N(b)=\{a,c,d,f\}$,$ N(c)=\{b,d\},$ $N(d)=\{b,c,e,f\},$ $N(e)=\{d,f\},$$N(f)=\{a,b,d,e\}.$
 \item \label{type4}$G181:$ $N(a)=\{b,f\},$ $N(b)=\{a,c,e,f\},$ $N(c)=\{b,d,e,f\},$ $N(d)=\{c,e\},$ $N(e)=\{b,c,d,f\},$ $N(f)=\{a,b,c,e\}.$
 \end{enumerate}
 \end{theorem}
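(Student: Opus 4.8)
The plan is to prove the statement by an exhaustive search over connected six-vertex graphs, followed by a case analysis that separates the realizable graphs from the obstructed ones. First I would take the complete list of connected graphs on six vertices from \cite{leck} as the universe of candidates. By the lemmas of Section~\ref{section1}, condition $\star$ already forces conditions (2) and (3) of Theorem~\ref{condition}, so among connected graphs the binding constraint is $\star$ itself; I would use it directly as a filter, discarding every graph that has a nonadjacent pair $x,y$ for which no vertex $z$ satisfies $N(x)\cup N(y)\subset\overline{N(z)}$.

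Among the graphs surviving this filter, the classification in \cite{sauer} exhibits an explicit associative multiplication for each of the $67$ that are genuine zero-divisor graphs, and since $\star$ is a necessary condition every such graph does pass the filter. The residual set---those satisfying $\star$ but absent from that classification---is exactly the collection I must show to be nonrealizable, and I would verify by direct matching of the neighborhood data that it reduces, up to isomorphism, to the four graphs $G98,\,G145,\,G163,\,G181$.

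The heart of the argument, and the main obstacle, is the negative direction: for each of these four graphs I must prove that \emph{no} multiplication compatible with the graph is associative. Here I would use the setup of Section~\ref{section2}. By the Remark there, products of adjacent vertices are forced to be $0$ and products of nonadjacent vertices must lie in $V(G)$; for a nonadjacent pair $a,b$ the admissible values of $ab$ are confined to the set $D(ab)$. The strategy is to choose a nonadjacent pair whose product is most tightly constrained, enumerate its finitely many admissible values, and for each value produce a triple $(u,v,w)$ with $(uv)w\neq u(vw)$, exactly as in the worked examples checked by Light's associativity test. The delicate point is that the contradiction must survive \emph{every} legal choice of the remaining ambiguous products at once; arranging the case split so that a single broken associativity identity eliminates all completions simultaneously is where the real effort lies.

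Finally I would confirm that $G98,\,G145,\,G163,\,G181$ are pairwise nonisomorphic and that the filtering left no further $\star$-graph unaccounted for, which closes the count at exactly four.
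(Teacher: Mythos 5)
The first thing to note is that the paper does not prove this statement at all: it is imported verbatim as a result of \cite{DGSW} (the theorem header is a citation), so there is no internal proof to compare your attempt against. Your outline is nevertheless the natural way such a classification is established, and it is consistent with the machinery the paper itself deploys for the seven-vertex case: the observation that $ab$ must lie in $D(ab)$ (which you assert and which follows from associativity, since $z\in N(a)\cup N(b)$ forces $(ab)z=0$), the reduction of conditions (2) and (3) to condition $\star$ via the Section~\ref{section1} lemmas, and the Light's-test style contradictions used in Examples such as \ref{eG322} and \ref{eG742}--type arguments.

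That said, what you have written is a strategy, not a proof, and the gap sits exactly where you locate it. The theorem has two finite but nontrivial verifications as its entire content: (i) that filtering the $112$ connected six-vertex graphs by $\star$ and removing Sauer's $67$ realizable ones leaves precisely the four graphs $G98$, $G145$, $G163$, $G181$ up to isomorphism --- you assert this "by direct matching of the neighborhood data" without exhibiting the check; and (ii) that each of the four admits no associative completion. For (ii) you correctly identify the difficulty (a single broken identity must kill \emph{every} legal assignment of the ambiguous products simultaneously), but you do not carry out even one of the four case analyses, and there is no a priori guarantee that a conveniently chosen nonadjacent pair exists for each graph; in practice these arguments (compare Example~\ref{eG322}, where the contradiction only appears after several products are forced in sequence) require propagating constraints through the whole table, not just testing one product. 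Until at least the four impossibility tables are actually worked, the proposal establishes nothing beyond what is already plausible from the statement itself.
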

 \begin{theorem} By adding a vertex and an edge to the vertex with maximum degree of the above graphs in Theorem~\ref{DGSW}, we will not get any zero-divisor graphs.
 \end{theorem}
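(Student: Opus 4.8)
The plan is to argue by contradiction, using a single reduction principle: deleting the freshly attached end should recover a semigroup structure on the original six vertices, which Theorem~\ref{DGSW} forbids. Fix one of the four graphs $G$ and let $x$ be a vertex of maximum degree; form $G'$ by adjoining a vertex $w$ with $N(w)=\{x\}$. Adding the end $w$ raises $d(x)$ by one and leaves every other degree unchanged, so $x$ is the \emph{unique} vertex of maximum degree in $G'$; by Lemma~\ref{highest} the graph $G'$ still satisfies $\star$. Thus $\star$ cannot detect the failure, and the entire obstruction must reside in associativity. Accordingly I would suppose $G'$ were the zero-divisor graph of a commutative semigroup $S'$ on $V(G')\cup\{0\}$ and seek a contradiction.

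The key step is to control exactly when the new element $w$ can be realized as a product of two of the six old vertices. Since $w$ is an end at $x$, an old vertex $s$ is adjacent to $w$ only when $s=x$. Hence if $u,v\in V(G)$ satisfied $uv=w$, then any old $s$ with $su=0$ would give $sw=s(uv)=0$, forcing $s=x$; so the only old neighbour of $u$ is $x$, i.e.\ $u$ is an end at $x$ in $G$ (and symmetrically $v$), the alternative $u=x$ being excluded because $x$ has an old neighbour $p\neq x$ with $p\notin N(w)$. Two ends at $x$ are mutually non-adjacent, so $uv\neq 0$ is consistent only with $u=v$ or $u,v$ two distinct ends at $x$. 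The upshot is: \emph{if $x$ carries no end in $G$, no product of two old vertices can equal $w$.} In that event the multiplication of $S'$ restricts to a subsemigroup on $V(G)\cup\{0\}$; this restriction is commutative and associative, keeps $0$ as zero, and its zero-products among nonzero elements are precisely the edges of $G$, because the induced subgraph of $G'$ on $V(G)$ is $G$ (the lone new edge $xw$ touches no pair inside $V(G)$). Every old vertex still has an old zero-divisor partner since $G$ is connected, so $\Gamma$ of the restriction is exactly $G$, contradicting Theorem~\ref{DGSW}.

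For $G145$, $G163$ and $G181$ the maximum-degree vertex $x=b$ has no degree-one neighbour, so this reduction applies verbatim and settles all three. The single remaining configuration is $G98$, whose maximum-degree vertex $b$ does carry an end $c$ with $d(c)=1$; by the analysis above the one dangerous product that survives is $c^2=w$, which matches the observation that in $G'$ the vertices $c$ and $w$ are twins, $N(c)=N(w)=\{b\}$. I would dispose of this last possibility directly: assuming $c^2=w$, the adjacencies of $G98'$ pin down all zero entries, and Light's associativity test applied to triples built from $c$, $w$ and a vertex lying off the star at $b$ forces an unavoidable clash $(\alpha\beta)\gamma\neq\alpha(\beta\gamma)$; the explicit table is recorded in the example below.

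The main obstacle is exactly this $G98$ sub-case. Three of the four graphs dissolve into the clean reduction because their heaviest vertex has no pendant, so $w$ has nowhere to hide; but in $G98$ the end $c$ already present at $b$ offers the single slot $c^2$ for $w$, and excluding it is the one point where a genuine associativity computation (rather than the structural reduction) cannot be avoided. Once that computation is carried out, all four graphs fail to be zero-divisor graphs, completing the proof.
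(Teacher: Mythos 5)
Your reduction step is exactly the content of the paper's Lemma~\ref{emanating}, and you apply it the same way the paper does: since no vertex of $G145$, $G163$ or $G181$ has degree one, no product of two old vertices can land on the new end $w$, the multiplication restricts to a semigroup on the six old vertices, and Theorem~\ref{DGSW} gives the contradiction. Your sharpening of that lemma --- that $uv=w$ forces $u$ and $v$ to be ends at $x$ in $G$, so that for $G98$ the only surviving possibility is $c^2=w$ where $c$ is the unique end already hanging off the maximum-degree vertex --- is a genuine improvement over the paper, which, because Lemma~\ref{emanating} simply does not apply when $x$ carries an end, falls back on a full multiplication-table analysis of the seven-vertex graph $G322$ in Example~\ref{eG322}.

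The gap is that you never actually close that last sub-case: the claim that assuming $c^2=w$ ``forces an unavoidable clash'' under Light's test is asserted, and the ``explicit table recorded in the example below'' does not exist in your write-up. Since this is the only point of the whole theorem that is not disposed of by the structural reduction, the proof is incomplete precisely where it matters. The good news is that your own refinement makes the missing computation short. In the labelling of Example~\ref{eG322} (new end $d$, old end $c$, both attached to $b$), one has $N(e)\cup N(c)=\{a,b,f\}$, and $b$ is the only vertex $z$ with $\{a,b,f\}\subseteq\overline{N(z)}$, so $ec=b$ is forced; then $c^2=d$ gives $ed=e(c\cdot c)=(ec)c=bc=0$, contradicting the fact that $e$ and $d$ are not adjacent. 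Alternatively you can avoid the computation altogether: $G98$ with an end added at $b$ is a complete bipartite graph $K_{2,2}$ (on $\{a,f\}$ and $\{b,e\}$ after relabelling) together with ends emanating from two distinct core vertices, which is already excluded by Sauer's Theorem IV.5 --- this is the ``second proof'' the paper records for $G322$. Either patch completes your argument; as submitted, it does not stand on its own.
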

 \begin{proof}The proof will be given in Examples~\ref{eG322}, \ref{eG504},\ref{eG617}, and \ref{eG750}.
 \end{proof}
\begin{lemma}\label{emanating} Let $G$ be a connected graph. Let $x\in V(G)$ be a vertex with the highest degree.  Suppose there are no ends emanating from $x$ in $G$.  Let $H$ be a graph defined by emanating an end from vertex $x.$  If $G$ is not a zero divisor graph but satisfies the necessary condition $\star$, then $H$ is not a zero-divisor graph.
\end{lemma}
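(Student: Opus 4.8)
My plan is to argue by contraposition: I would assume that $H$ is a zero-divisor graph and then manufacture a zero-divisor semigroup whose graph is exactly $G$, contradicting the standing hypothesis that $G$ is not a zero-divisor graph. Write $V(H)=V(G)\cup\{w\}$, where $w$ is the emanated end, so that $N_H(w)=\{x\}$ while $N_H(v)=N_G(v)$ for every $v\in V(G)\setminus\{x\}$. First I would record that, by Lemma~\ref{highest}, the enlarged graph $H$ still satisfies $\star$, so $H$ is a bona fide candidate and the content of the lemma is that it fails to be a zero-divisor graph anyway. Fixing a commutative semigroup $S(H)$ on $V(H)\cup\{0\}$ that realizes $H$, the strategy is to show that the complement of the new vertex, $T=V(G)\cup\{0\}=S(H)\setminus\{w\}$, is a subsemigroup whose induced multiplication realizes $G$. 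Since associativity is inherited by any subsemigroup and every nonzero element of $T$ keeps a zero-divisor partner inside $V(G)$ (as $G$ is connected), this would exhibit $G$ as a zero-divisor graph and finish the proof.

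The heart of the argument --- and the step I expect to be the main obstacle --- is to prove that $T$ is closed, i.e. that the end $w$ is never a product of two elements of $V(G)$: for all $y,z\in V(G)$ (allowing $y=z$), $yz\neq w$ in $S(H)$. The tool I would use is monotonicity of annihilators: whenever $ta=0$ we have $t(ab)=(ta)b=0$, so $\operatorname{Ann}(a)\subseteq\operatorname{Ann}(ab)$. Supposing for contradiction that $yz=w$, this (together with commutativity) gives $\operatorname{Ann}(y)\cup\operatorname{Ann}(z)\subseteq\operatorname{Ann}(w)$. Because $w\notin V(G)$ and the only neighbor of $w$ is $x$, I would compute $\operatorname{Ann}(w)\cap V(G)=\{x\}$. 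Since every $G$-neighbor of $y$ annihilates $y$ in $S(H)$, this forces $N_G(y)\subseteq\operatorname{Ann}(y)\cap V(G)\subseteq\{x\}$, and likewise $N_G(z)\subseteq\{x\}$.

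To close the contradiction I would observe that $N_G(y)\subseteq\{x\}$ cannot happen: as $G$ is connected on more than one vertex, $N_G(y)\neq\emptyset$, hence $N_G(y)=\{x\}$. If $y\neq x$ this makes $y$ an end emanating from $x$ in $G$, contradicting the hypothesis that $x$ has no ends in $G$; if $y=x$ it forces $N_G(x)=\emptyset$ (there are no loops), contradicting connectivity. Either way $yz=w$ is impossible, so $T$ is closed and therefore a commutative subsemigroup. Finally, for distinct $u,v\in V(G)$ one has $uv=0$ in $T$ exactly when $u$ is adjacent to $v$ in $H$, i.e. exactly when $u$ is adjacent to $v$ in $G$, so the zero-divisor graph of $T$ is precisely $G$ --- the contradiction I sought. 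I would remark that the maximum-degree assumption on $x$ enters only via Lemma~\ref{highest} (to guarantee $H$ satisfies $\star$); the closure argument itself uses only that $x$ has no ends in $G$ and that $G$ is connected.
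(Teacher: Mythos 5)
Your proof is correct and follows essentially the same route as the paper's: both arguments reduce to showing that the new end $w$ can never occur as a product of two old vertices (the paper via the necessary condition $N(a)\cup N(b)\subseteq\overline{N(ab)}$, you via the equivalent annihilator containment $ty=0\Rightarrow t(yz)=0$), and then conclude that deleting $w$ leaves a subsemigroup whose graph is $G$, contradicting the hypothesis. Your write-up is in fact slightly more complete, since it also rules out $y^2=w$, a case the paper's analysis of nonadjacent pairs does not explicitly address.
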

\begin{proof}
Notice $N(w)=\{x\}.$ Suppose $a$ and $b$ are two dis-connected vertices in $G$.  If one of $a$ or $b$ is not an end in $G$, then $|N(a)\cup N(b)|\geq 2.$ It follows that $N(a)\cup N(b)$ is not a subset of $\overline{N(w)}.$ Hence $ab\ne w.$ If $\deg(a)=\deg(b)=1$, then $a$ and $b$ are the ends in $G$.   By the definition of $G$, $a$ and $b$ do not emanates from $x$.  It follows that $N(a)\cup N(b)$ is not a subset of $\overline{N(w)}.$  Hence $ab\ne w.$

Suppose $H$ is a zero-divisor graph. Then $G$ must be a zero-divisor graph.  This is a contradiction!
\end{proof}

\begin{lemma} Let $G$ be a connected graph which does not satisfies the necessary condition $\star$ of a zero-divisor graph.   Let $x$ be a vertex of $G$ with highest degree.  Let $H$ be a graph defined by adding a new vertex $w$ and an new edge $xw$ to $G$.  Then $H$ is not a zero-divisor graph.
\end{lemma}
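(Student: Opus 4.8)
The plan is to reduce the claim to the single observation that condition $\star$ is \emph{inherited}: since every zero-divisor graph satisfies condition $\star$ by Theorem~\ref{condition}, it suffices to prove that $H$ fails condition $\star$. The natural violating pair for $H$ is the pair that already witnesses the failure of $\star$ in $G$, so first I would record the effect of the construction on neighborhoods: writing $N_G$ and $N_H$ for neighborhoods in $G$ and $H$, we have $N_H(w)=\{x\}$, $N_H(x)=N_G(x)\cup\{w\}$, and $N_H(v)=N_G(v)$ for every other vertex $v$. In particular, attaching the end $w$ creates no new adjacencies among the vertices of $G$.

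Concretely, because $G$ does not satisfy $\star$, I would fix nonadjacent vertices $a,b\in V(G)$ for which no $z\in V(G)$ satisfies $N_G(a)\cup N_G(b)\subset\overline{N_G(z)}$. These vertices remain distinct and nonadjacent in $H$. Assuming for contradiction that $H$ satisfies $\star$, there is a vertex $z\in V(H)$ with $N_H(a)\cup N_H(b)\subset\overline{N_H(z)}$, and the goal is to show that $z$ already dominates the pair inside $G$, contradicting the choice of $(a,b)$. The argument splits according to whether $x\in\{a,b\}$.

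If $x\notin\{a,b\}$, then $N_H(a)\cup N_H(b)=N_G(a)\cup N_G(b)\subseteq V(G)$, so $w$ cannot appear in the union. If $z=w$, then $N_G(a)\cup N_G(b)\subseteq\{w,x\}\cap V(G)=\{x\}\subset\overline{N_G(x)}$, so $x$ dominates the pair in $G$; if $z=x$, then since $w\notin N_G(a)\cup N_G(b)$ we get $N_G(a)\cup N_G(b)\subset\overline{N_G(x)}$; and if $z\neq x,w$, then $\overline{N_H(z)}=\overline{N_G(z)}$, so $z$ dominates the pair in $G$. Each possibility contradicts the choice of $(a,b)$. If instead $x\in\{a,b\}$, say $a=x$, then $N_H(a)\cup N_H(b)=\bigl(N_G(x)\cup N_G(b)\bigr)\cup\{w\}$, which contains $w$; since the only neighbor of $w$ in $H$ is $x$, the requirement $w\in\overline{N_H(z)}$ forces $z\in\{x,w\}$. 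The choice $z=w$ is impossible because it would give $N_G(x)\cup N_G(b)\subseteq\{x\}$, forcing $N_G(x)=\emptyset$ and contradicting connectedness of $G$; and $z=x$ gives $N_G(x)\cup N_G(b)\subset\overline{N_G(x)}$, contradicting the failure of $\star$ at the pair $(x,b)$.

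I expect the only delicate point to be the case $x\in\{a,b\}$: attaching $w$ enlarges the relevant neighborhood union by the genuinely new vertex $w$, so the bookkeeping must verify that the only vertices whose closed neighborhood can contain $w$ are $x$ and $w$ themselves, and that neither choice produces a dominator not already available in $G$. Once this is settled, every candidate witness in $H$ restricts to a witness in $G$, so $H$ cannot satisfy $\star$ and is therefore not a zero-divisor graph. I would remark that the maximum-degree hypothesis on $x$ is not in fact used in this argument; it is retained only to align with the construction employed in the surrounding results.
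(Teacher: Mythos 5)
Your proof is correct and follows the same route as the paper: the paper simply asserts that the pair $(a,b)$ witnessing the failure of $\star$ in $G$ still witnesses it in $H$ ("it is easy to see"), and you supply the case analysis verifying that assertion, including the only mildly delicate case $x\in\{a,b\}$ where the union acquires the new vertex $w$. Your closing remark is also accurate: neither your argument nor the paper's uses the maximum-degree hypothesis on $x$.
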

\begin{proof} Suppose $a$ and $b$ be two dis-connected vertices in $G$ such that $N(a)\cup N(b)$ is not subset of any $\overline{N(y)}$ in $G$.  Then it is easy to see that $N(a)\cup N(b)$ is not subset of any $\overline{N(y)}$ in $H$.

Hence $H$ is not a zero-divisor graph.
\end{proof}
\subsection{Non zero-divisor graphs with seven vertices that satisfy the condition $\star$  produced by applying  \cite[Lemma 3.14]{DGSW} to any graphs  in Theorem~\ref{DGSW}}
\begin{theorem}
One can not produce any zero-divisor graphs if we apply  \cite[Lemma 3.14]{DGSW} to any graphs in Theorem~\ref{DGSW}.
\end{theorem}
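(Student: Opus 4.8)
The plan is to read the construction of \cite[Lemma 3.14]{DGSW} (our Theorem~\ref{add vertex}) purely as a graph operation: given a six-vertex graph $G$ and a vertex $x$, form $G^x$ by adjoining a new vertex $y$ with $N(y)=N(x)$ (the open-twin case) or $N(y)=\overline{N(x)}$ (the closed-twin case). Since each of $G98,\,G145,\,G163,\,G181$ fails to be a zero-divisor graph, it suffices to prove the converse of Theorem~\ref{add vertex}: if the doubled graph $G^x$ is a zero-divisor graph, then so is $G$. This mirrors the strategy already used for ends in Lemma~\ref{emanating}, where a zero-divisor structure on the larger graph was shown to restrict to the smaller one; here the new vertex is a twin rather than an end, so instead of deleting it we will collapse it onto $x$.

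First I would record that the doubling operation preserves the necessary condition $\star$. If $a,b$ are non-adjacent in $G^x$ and neither is the new vertex $y$, a vertex $z$ dominating them in $G$ still dominates in $G^x$ after noting $N(y)\subset\overline{N(x)}$; any pair involving $y$ is handled using $N(y)=N(x)$ together with the dominating vertex supplied for $x$ in $G$. Thus $\star$ can never be the reason $G^x$ fails, and the entire content of the theorem lies in the \emph{associativity} obstruction, exactly as for the original six-vertex non-examples of Theorem~\ref{DGSW}.

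The heart of the argument is a collapsing map. Suppose $S$ is a commutative semigroup realizing $G^x$, and let $\phi\colon S\to S$ send $y\mapsto x$ and fix everything else. I would define a product on $V(G)\cup\{0\}$ by $a*b=\phi(ab)$ and show it realizes $G$. Compatibility with the edge set of $G$ is immediate, since replacing the nonzero value $y$ by the nonzero value $x$ neither creates nor destroys zeros. The delicate point is associativity, which follows once $\phi$ is a homomorphism onto its image, i.e. once the twins are shown to multiply consistently: $\phi(yt)=\phi(xt)$ for every $t$, together with $\phi(y^2)=\phi(xy)=\phi(x^2)$. For $t$ adjacent to $x$ this is trivial because $xt=yt=0$, so the only substantive relations concern the finitely many $t$ non-adjacent to $x$.

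This last point is where I expect the real work, and the main obstacle, to lie. In an arbitrary (not purposely constructed) semigroup structure, open or closed twins need not multiply identically, so twin-consistency is not automatic and must be extracted from the specific, highly constrained shape of the four graphs in Theorem~\ref{DGSW}. Concretely, I would run the finite check graph by graph and vertex by vertex: for each admissible doubling I either exhibit, via Light's associativity test applied to the forced entries of the spectra $\Spec(\cdot)$, that no completion of the multiplication table is associative, or I verify that every associative completion does satisfy the twin relations above and hence collapses under $\phi$ to a zero-divisor structure on $G$, contradicting Theorem~\ref{DGSW}. Since there are only four graphs, each with a bounded number of maximum-degree vertices and only the two twin-types to consider, the case analysis is finite; carrying it out is precisely the computation deferred to the referenced examples.
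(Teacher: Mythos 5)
Your headline idea --- a collapsing map $\phi$ that identifies the twin $y$ with $x$ and pushes a hypothetical semigroup on $G^x$ down to one on $G$, contradicting Theorem~\ref{DGSW} --- would make the theorem a formal corollary of the six-vertex result, but as you yourself concede it is not a proof: $a*b=\phi(ab)$ is associative only if $\phi(yt)=\phi(xt)$ for every $t$, and twin-consistency is not a consequence of the semigroup axioms (indeed no converse of Theorem~\ref{add vertex} is proved in \cite{DGSW} or in this paper). Once that lemma is withdrawn, what remains of your proposal is ``run the finite check graph by graph,'' which is precisely the paper's proof and which you have not executed. The paper's argument consists of ten concrete verifications (Examples~\ref{eG322}, \ref{eG405}, \ref{eG635}, \ref{eG669}, \ref{eG677}, \ref{eG793}, \ref{eG918}, \ref{eG1044}, \ref{eG1060}, \ref{eG1130}), almost all of which derive a contradiction directly in the seven-vertex multiplication table from the entries forced by condition~$\star$ and Light's test; only Example~\ref{eG918} reduces to the six-vertex case, and it does so by a \emph{deletion} argument (showing no product can equal $a$ or $g$, so one of them can be removed), not by a twin collapse. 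So the gap is that the entire substantive content of the theorem is deferred to a computation you describe but do not perform, and the one general tool you propose to avoid it is unestablished.

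A secondary inaccuracy: your claim that doubling always preserves condition~$\star$ is not justified as stated. In the open-twin case, if the \emph{only} vertex dominating a non-adjacent pair $a,b$ in $G$ is $x$ itself and $x\sim a$, then $y\in N_{G^x}(a)$ but $y\notin\overline{N_{G^x}(x)}$, so $x$ no longer dominates and no substitute is guaranteed. This does not hurt the theorem (a graph violating $\star$ is automatically not a zero-divisor graph), but it should not be asserted as routine.
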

\begin{proof}
The proof will be given in Examples~\ref{eG322},\ref{eG405},\ref{eG635},\ref{eG669}, \ref{eG677},\ref{eG793}, \ref{eG918}, \ref{eG1044}, \ref{eG1060}, and\ref{eG1130}.
\end{proof}

\begin{lemma}If $G$ is graph that violates the necessary condition $\star$ of a zero-divisor graph, then a graph $H$ formed by the method of  \cite[Lemma 3.14]{DGSW} still violates the necessary condition $\star$  of a zero-divisor graph.
\end{lemma}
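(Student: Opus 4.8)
The plan is to show that the very pair of nonadjacent vertices witnessing the failure of condition $\star$ in $G$ continues to witness its failure in $H$. Concretely, since $G$ violates $\star$, fix nonadjacent vertices $a,b\in V(G)$ for which no vertex $z\in V(G)$ satisfies $N(a)\cup N(b)\subset\overline{N(z)}$. First I would record what the construction of Theorem~\ref{add vertex} does at the level of graphs: it adjoins a single new vertex $y$ that is a twin of some chosen vertex $x$, with either $N_H(y)=N_G(x)$ (the false-twin mode, $x^2\ne 0$) or $N_H(y)=\overline{N_G(x)}$ (the true-twin mode, $x^2=0$), and the only edges of $H$ not already present in $G$ are those incident to $y$. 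Since $G$ is not assumed to be a zero-divisor graph, I treat this purely as a graph operation and let the argument cover both modes at once.

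The two bookkeeping facts I would extract are: (a) every old vertex $v$ has $N_H(v)\subseteq N_G(v)\cup\{y\}$ and $N_H(v)\cap V(G)=N_G(v)$, so in particular $\bigl(N_H(a)\cup N_H(b)\bigr)\cap V(G)=N_G(a)\cup N_G(b)$; and (b) in both twin modes $\overline{N_H(y)}\cap V(G)\subseteq\overline{N_G(x)}$ (in the false-twin mode this intersection is $N_G(x)$, in the true-twin mode it is $\overline{N_G(x)}$). Note also that $a$ and $b$ remain nonadjacent in $H$, since adjoining $y$ creates no edge between old vertices.

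With these in hand the main step is to test every candidate dominator $z\in V(H)$ and exhibit a vertex in $N_H(a)\cup N_H(b)$ lying outside $\overline{N_H(z)}$. If $z\in V(G)$, the hypothesis gives some $p\in\bigl(N_G(a)\cup N_G(b)\bigr)\setminus\overline{N_G(z)}$; this $p$ lies in $V(G)$, so $p\ne y$, whence $p\in N_H(z)$ would force $p\in N_G(z)$, a contradiction, and also $p\ne z$; thus $p\in N_H(a)\cup N_H(b)$ but $p\notin\overline{N_H(z)}$. If $z=y$, I apply the hypothesis to $x$ (which likewise fails to dominate $\{a,b\}$ in $G$) to obtain $q\in\bigl(N_G(a)\cup N_G(b)\bigr)\setminus\overline{N_G(x)}$; by fact (b) the vertex $q\in V(G)$ cannot lie in $\overline{N_H(y)}$, yet $q\in N_H(a)\cup N_H(b)$. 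In either case $z$ fails to dominate, so no vertex of $H$ dominates the pair $\{a,b\}$, and therefore $H$ violates $\star$.

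I expect no serious obstacle here: the argument is a direct neighborhood computation. The only point demanding care is the case $z=y$, where the new vertex's closed neighborhood must be shown incapable of absorbing $N_G(a)\cup N_G(b)$; this is exactly where the true-twin versus false-twin distinction could in principle intrude, but fact (b) collapses both cases to the already-assumed failure of $x$ in $G$, so the twin mode turns out to be immaterial.
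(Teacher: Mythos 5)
Your proof is correct and follows essentially the same route as the paper's: restrict the closed neighborhoods of candidate dominators to $V(G)$ and reduce both cases (old vertex, new twin vertex) to the assumed failure of $\star$ in $G$. Your fact (b) is in fact slightly more careful than the paper, which writes $N_H(y)\cap V(G)=N(x)$ and glosses over the true-twin mode where this intersection is $\overline{N_G(x)}$; your formulation handles both modes uniformly.
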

\begin{proof}
Suppose $N_{G}(a)\cup N_{G}(b)$ is not subset of any $\overline{N_{G}(x)}$ in old graph. Suppose there exists a vertex $y\notin V(H)-V(G)$ such that $N_{H}(a)\cup N_{H}(b)\subset \overline{N_{H}(y)}$. Notice $N_{H}(a)\cap V(G)=N_{G}(a),$ $N_{H}(b)\cap V(G)=N_{G}(b),$ $N_{H}(y)\cap V(G)=N_{G}(y).$ We get a contradiction. If $y\in V(H)-V(G),$ notice $N_H(y)\cap V(G)=N(x)$. This again is a contradiction. 
\end{proof}


\subsection{satisfying $\star$ condition but not zero-divisor graph}
\begin{theorem} The following are all the connected graphs in \cite{RW} which satisfy the $\star$ condition but are not zero-divisor graphs.
\begin{center}
 $G322$, $G383$,$G405$,$G475$,$G482$,$G490$,$G504$,$G600$,$G602$,$G607$, $G617$,$G627$,$G635$,$G669$,$G677$,$G742$,$G750$,$G754$, $G766$, $G772$,$G793$,$G799$, $G803$,$G808$, $G893$, $G899$, $G907$, $G917-G918$, $G928$, $G933$, $G938$, $G953$, $G1024$, $G1030$, $G1034$, $G1043-G1044$, $G1060$, $G1083$, $G1120$,
$G1130$, $G1146$, $G1177$,
\end{center}
\end{theorem}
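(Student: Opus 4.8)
The plan is to read this statement as the final bookkeeping step of an exhaustive classification, so the proof is organized around the complete enumeration in \cite{RW}. First I would take \cite{RW} as the full list of connected graphs on seven vertices, so that every graph under consideration carries a label $Gn$. The entire problem then reduces to sorting these labeled graphs into three disjoint bins: those that violate the condition $\star$, those that satisfy $\star$ and are zero-divisor graphs, and those that satisfy $\star$ but fail to be zero-divisor graphs. The classification theorem of Section~\ref{section3} pins down the second bin, the condition $\star$ test removes the first bin, and whatever survives is exactly the third bin. Proving the statement therefore amounts to showing that the displayed list is precisely that survivor set.

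The choice of $\star$ as the sole filter is justified by the lemmas of Section~\ref{section1}. Since $N(a)\cup N(b)\subset\overline{N(c)}$ already forces $d(a,b)\le 3$ and forces the core to be a union of quadrilaterals and triangles, any connected graph satisfying $\star$ automatically satisfies every necessary condition of Theorem~\ref{condition}. Consequently a connected seven-vertex graph is a genuine candidate for being a zero-divisor graph if and only if it satisfies $\star$, and among all such candidates the zero-divisor graphs are exactly those enumerated in Section~\ref{section3}. Thus no separate check of conditions (1)--(3) is needed; set-theoretic subtraction of the Section~\ref{section3} list from the $\star$-satisfying candidates is all that defines the target list.

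For each graph surviving this subtraction I would argue in two halves, exactly as promised by the forward references to Examples~\ref{eG322}, \ref{eG405}, \ref{eG635}, \ref{eG669}, \ref{eG677}, \ref{eG793}, \ref{eG918}, \ref{eG1044}, \ref{eG1060}, and \ref{eG1130}. To confirm $\star$, I would check for every non-adjacent pair $x,y$ that $D(xy)\ne\emptyset$, i.e. exhibit a $z$ with $N(x)\cup N(y)\subset\overline{N(z)}$; this is a direct neighborhood computation. To confirm the graph is \emph{not} a zero-divisor graph, I would invoke the machinery of Section~\ref{section2}: the edges force $xy=0$ whenever $x-y$ is an edge, and for each non-adjacent pair the admissible value of $xy$ is confined to $D(xy)$, with the analogous $D(a^2)$ constraint on squares. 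This reduces the search for an associative table to finitely many assignments, and for each one I would exhibit a triple $(u,v,t)$ with $(uv)t\ne u(vt)$, so that Light's associativity test fails for every admissible table.

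The hard part is this non-existence half. Confirming a zero-divisor graph needs only one lucky associative table, whereas here one must eliminate \emph{every} admissible multiplication. The only leverage is the finiteness of the constraint sets $D(xy)$ and $D(a^2)$, so the genuine work lies in organizing the case split so that a single associativity failure simultaneously kills a whole family of assignments; the spectrum bookkeeping of Section~\ref{section2}, which records each product as a full tuple, is what keeps this finite analysis from exploding. Completeness of the list --- that no connected seven-vertex graph satisfying $\star$ has been overlooked --- rests entirely on the exhaustiveness of the \cite{RW} enumeration, so the proof is only as complete as that catalogue.
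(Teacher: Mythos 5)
Your proposal matches the paper's own strategy: the theorem is proved by subtracting the Section~\ref{section3} list of zero-divisor graphs from the $\star$-satisfying connected graphs in \cite{RW}, and then, for each survivor, exhibiting a partial multiplication table constrained by the sets $D(xy)$ and deriving an associativity contradiction via Light's test. The only thing you do not mention is that the paper shortcuts several cases by reduction lemmas (Lemma~\ref{emanating} and the appeal to the four six-vertex exceptions of \cite{DGSW}, plus Sauer's Theorems IV.3 and IV.5) rather than running a fresh table analysis every time, but this is an efficiency of execution, not a different method.
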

\begin{proof} The proof is given in the following examples in this section.
\end{proof}
\begin{example}\label{eG322} The connected graph $G322$  with $V(G)=\{a,b, c,d, e,f,g\}$ and $E(G)$ defined by $N(a)=\{b,e\},\, N(b)=\{a,c, d,f\}, N(c)=\{b\}, N(d)=\{b\},N(e)=\{a,f\}, N(f)=\{b,e,g\}, N(g)=\{f\}$ is not a zero-divisor graph. If we remove $d,$ we get graph~(\ref{type1}) in Theorem~\ref{DGSW}.
We have the following multiplication table.
\begin{center}
\begin{tabular}{|l|c|r|r|r|r|r|r|}
\hline
$\bullet$ &$a$&$b$&$c$&$d$&$e$&$f$&$g$  \\ \hline
$a$&&0&&&0&$f$&$f$  \\ \hline

$b$&0&0&0&0&$b$&0&$b$  \\ \hline
c&&0&&&$b$&$f$&  \\ \hline

$d$&&0&&&$b$&$f$&  \\ \hline
$e$&0&$b$&$b$&$b$&&0&  \\ \hline
$f$&$f$&0&$f$&$f$&0&0&0\\  \hline
$g$&$f$&$b$&&&&0&  \\ \hline

\end{tabular}
\end{center}

By Light's Associativity Test, we have the following $a-$table.

\begin{center}
\begin{tabular}{|l|c|r|r|r|r|r|r|}
\hline
$a$&$a^2$&0&$ac$&$ad$&0&$f$&$f$  \\ \hline
$a^2$&&0&&0&0&$f$&$f$  \\ \hline

$0$&0&0&0&0&$0$&0&$0$  \\ \hline
$ac$&&0&&0&0&$f$&$f$  \\ \hline
$ad$&&0&&0&0&$f$&$f$   \\ \hline

$0$&0&$0$&$0$&$0$&0&0&0  \\ \hline
$f$&$f$&0&$f$&$f$&0&0&0\\  \hline
$f$&$f$&0&$f$&$f$&0&0&0\\  \hline

\end{tabular}
\end{center}
By checking each spectrum of vertices of $G,$ we get $a^2=a, \, ac=a,\, ad=a.$ Hence we have the following multiplication Table.
\begin{center}
\begin{tabular}{|l|c|r|r|r|r|r|r|}
\hline
$\bullet$&$a$&$b$&$c$&$d$&$e$&$f$&$g$  \\ \hline
$a$&$a$&0&$a$&$a$&0&$f$&$f$  \\ \hline

$b$&0&0&0&0&$b$&0&$b$  \\ \hline
c&$a$&0&&&$b$&$f$&  \\ \hline

$d$&$a$&0&&&$b$&$f$&  \\ \hline
$e$&0&$b$&$b$&$b$&$e$&0&$e$  \\ \hline
$f$&$f$&0&$f$&$f$&0&0&0\\  \hline
$g$&$f$&$b$&&&$e$&0&  \\ \hline

\end{tabular}
\end{center}
By Light's Associativity Test, we have the following $d-$table.
\begin{center}
\begin{tabular}{|l|c|r|r|r|r|r|r|}
\hline
$d$&$a$&0&$cd$&$d^2$&$b$&$f$&$dg$  \\ \hline
$a$&$a$&0&$a$&$a$&0&$f$&$f$  \\ \hline

$0$&0&0&0&0&$0$&0&$0$  \\ \hline
$cd$&$a$&0&&&0&$f$& \\ \hline
$d^2$&$a$&0&&&0&$f$&   \\ \hline

$b$&0&$0$&$0$&$0$&$b$&0&$b$  \\ \hline
$f$&$f$&0&$f$&$f$&0&0&0\\  \hline
$dg$&$f$&0&&&$b$&0&\\  \hline

\end{tabular}
\end{center}
Since $dg\in\{b,f\},$  we get a contradiction.

Second proof:Notice this is a complete bipartite graph together with three ends emanating from two distinct vertices $c$ and $x$. By \cite[Theorem IV.5]{sauer}, we conclude this graph is not a zero-divisor graph.

\end{example}
\begin{example} $G383$ satisfies the necessary condition $\star$ but is not a zero-divisor graph.
 $G383$ can be defined by $N(1)=\{3,4\},$ $N(2)=\{3\},$ $N(3)=\{1,2,4,6\},$ $N(4)=\{1,3,5,6\},$ $N(5)=\{4\},$ $N(6)=\{3,4,7\},$ $N(7)=\{6\}.$
 
 It folllows that \[N(2)\cup N(4)=N(4)\cup N(7)=\{1,3,5,6\}=N(4)\] and \[N(3)\cup N(5)=N(3)\cup N(7)=\{1,2,4,6\}=N(3)\]
 
 \begin{center}
\begin{tabular}{|l|c|r|r|r|r|r|r|}
\hline
$\bullet$&$1$&$2$&$3$&$4$&$5$&$6$&$7$  \\ \hline
$1$&&&$0$&$0$&&&  \\ \hline

$2$&&&$0$&$4$&&&  \\ \hline
$3$&0&$0$&&$0$&$3$&0&3  \\ \hline
$4$&$0$&$4$&$0$&&$0$&0&$4$  \\ \hline
$5$&&&$3$&$0$&&&  \\ \hline
$6$&&&0&0&&&$0$  \\ \hline
$7$&&&3&$4$&&$0$&  \\ \hline

\end{tabular}
\end{center}
 By checking $7-$ table, we get $5\bullet 7=3.$ It follows that $1\bullet 7=4.$ But then we can't define $2\bullet 7.$
 \end{example}

 \begin{example}\label{eG405}The connected graph $G405$  with $V(G)=\{a,b, c,d, e,f,g\}$ and $E(G)$ defined by $N(a)=\{c,d,e\},\, N(b)=\{c,d,e,g\}, N(c)=\{a,b\}, N(d)=\{a,b,f\},N(e)=\{a,b\}, N(f)=\{d\}, N(g)=\{b\}$ is not a zero-divisor graph. If we remove $e,$ we get graph~(\ref{type1}) in Theorem~\ref{DGSW}.

We have the following multiplication Table.
\begin{center}
\begin{tabular}{|l|c|r|r|r|r|r|r|}
\hline
$\bullet$&$a$&$b$&$c$&$d$&$e$&$f$&$g$  \\ \hline
$a$&&&$0$&0&$0$&&  \\ \hline

$b$&&&$0$&$0$&$0$&&0  \\ \hline
$c$&0&$0$&&&&&  \\ \hline
$d$&$0$&$0$&&&&$0$&  \\ \hline
$e$&$0$&0&&&&&  \\ \hline
$f$&&&&0&&&  \\ \hline
$g$&&0&&&&&  \\ \hline

\end{tabular}
\end{center}
Since \[N(a)\cup N(b)=\{c,d,e,g\}=N(b),\, N(a)\cup N(g)\subset\overline{N(b)}\]
\[N(b)\cup N(f)=N(b),\, N(c)\cup N(d)=N(d),\, N(c)\cup N(f)\subset \overline{N(d)}\]
\[N(d)\cup N(e)=N(d),\,N(e)\cup N(f)\subset \overline{N(d)},\,N(d)\cup N(g)=N(d)\]

we get the following updated Multiplication-table.
\begin{center}
\begin{tabular}{|l|c|r|r|r|r|r|r|}
\hline
$\bullet$&$a$&$b$&$c$&$d$&$e$&$f$&$g$  \\ \hline
$a$&&$b$&$0$&0&$0$&&$b$  \\ \hline

$b$&$b$&0&$0$&$0$&$0$&$b$&0  \\ \hline
$c$&0&$0$&&$d$&&$d$&  \\ \hline
$d$&$0$&$0$&$d$&0&$d$&$0$&$d$  \\ \hline
$e$&$0$&0&&$d$&&$d$&  \\ \hline
$f$&&$b$&$d$&0&$d$&&  \\ \hline
$g$&$b$&0&&$d$&&&  \\ \hline

\end{tabular}
\end{center}
Let us check the $a-$table.
\begin{center}
\begin{tabular}{|l|c|r|r|r|r|r|r|}
\hline
$a$&$a^2$&$b$&$0$&$0$&$0$&$af$&$b$  \\ \hline
$a^2$&&$b$&$0$&0&$0$&&$b$  \\ \hline

$b$&$b$&0&$0$&$0$&0&$b$&0  \\ \hline
$0$&0&$0$&$0$&$0$&$0$&$0$&$0$  \\ \hline
$0$&0&$0$&$0$&$0$&$0$&$0$&$0$  \\ \hline
$0$&0&$0$&$0$&$0$&$0$&$0$&$0$  \\ \hline

$af$&&$b$&$0$&$0$&&&$b$ \\ \hline
$b$&$b$&0&$0$&$0$&0&$b$&0  \\ \hline

\end{tabular}
\end{center}
we get $a^2=a,\, af=a.$ We have the following updated multiplication table.
\begin{center}
\begin{tabular}{|l|c|r|r|r|r|r|r|}
\hline
$\bullet$&$a$&$b$&$c$&$d$&$e$&$f$&$g$  \\ \hline
$a$&$a$&$b$&$0$&0&$0$&$a$&$b$  \\ \hline

$b$&$b$&0&$0$&$0$&$0$&$b$&0  \\ \hline
$c$&0&$0$&&$d$&&$d$&  \\ \hline
$d$&$0$&$0$&$d$&0&$d$&$0$&$d$  \\ \hline
$e$&$0$&0&&$d$&&$d$&  \\ \hline
$f$&$a$&$b$&$d$&0&$d$&&  \\ \hline
$g$&$b$&0&&$d$&&&  \\ \hline

\end{tabular}
\end{center}
Let's check the $f-$table.
\begin{center}
\begin{tabular}{|l|c|r|r|r|r|r|r|}
\hline
$f$&$a$&$b$&$d$&$0$&$d$&$f^2$&$fg$  \\ \hline
$a$&$a$&$b$&$0$&0&$0$&$a$&$b$  \\ \hline

$b$&$b$&0&$0$&$0$&0&$b$&0  \\ \hline
$d$&$0$&$0$&$d$&0&$d$&$0$&$d$  \\ \hline

$0$&0&$0$&$0$&$0$&$0$&$0$&$0$  \\ \hline
$d$&$0$&$0$&$d$&0&$d$&$0$&$d$  \\ \hline

$f^2$&$a$&$b$&$0$&$0$&$0$&&  \\ \hline
$fg$&$b$&$0$&$d$&$0$&$d$&&  \\ \hline

\end{tabular}
\end{center}
Then we can not define $fg$ since in the Multiplication-table, no spectrum of $a,b,c,d,e,f,g$  match the spectrum of $fg$ in the  $f-$table.

Second proof:$G405$ is not a zero-divisor graph since $G405$ is a complete bipartite graph together with ends emanating from two distinct the  vertices \cite[Theorem IV.5]{sauer}.
\end{example}
\begin{example}\label{Sauer Theorem IV3, page 26}The connected graph $G475$  with $V(G)=\{a,b, c,d, e,f,g\}$ and $E(G)$ defined by $N(a)=\{b,c,d,e\},\, N(b)=\{a,c,d\}, N(c)=\{a,b,d,g\}, N(d)=\{a,b,c,f\},N(e)=\{a\}, N(f)=\{d\}, N(g)=\{c\}$ is not a zero-divisor graph. If we remove $f$ from the graph, we get a zero-divisor graph with six vertices. This is a complete graph on four vertices together with three ends emanating from three different vertices.  By \cite[Theorem IV.3]{sauer}, we have that $G$ is not a zero-divisor graph.
\end{example}
 \begin{example}
 $G482$ is not a zero-divisor graph although it satisfies the necessary condition $\star.$
 $G482$ can be defined by $N(1)=\{2,3,6,7\},$ $N(2)=\{1,3\},$ $N(3)=\{1,2,4,6\},$  $N(4)=\{3\},$ $N(5)=\{6\},$ $N(6)=\{1,3,5,7\},$ $N(7)=\{1,6\}.$  It is easy to check that $G482$ satisfies the necessary condition $\star.$
 
 We have the following,
 \[N(1)\cup N(4)=N(1)\cup N(5)=\{2,3,6,7\}=N(1)\]
\[N(3)\cup N(5)=N(3)\cup N(7)=\{1,2,4,6\}=N(3)\]
\[N(6)\cup N(2)=N(6)\cup N(4)=\{1,3,5,7\}=N(6)\]
 
 \begin{center}
\begin{tabular}{|l|c|r|r|r|r|r|r|}
\hline
$\bullet$&$1$&$2$&$3$&$4$&$5$&$6$&$7$  \\ \hline
$1$&&0&$0$&$1$&1&0&0  \\ \hline

$2$&0&&$0$&&&6&  \\ \hline
$3$&0&$0$&&$0$&$3$&0&3  \\ \hline
$4$&$1$&&$0$&&&6&  \\ \hline
$5$&1&&$3$&&&0&  \\ \hline
$6$&0&6&0&6&0&&$0$  \\ \hline
$7$&0&&3&&&$0$&  \\ \hline

\end{tabular}
\end{center}

Checking the $4-$ table, we get $4\bullet 4=4$ and $4\bullet 5=1.$ Hence $1\bullet 1=1.$ It follows that $4\bullet 7=6$ and $6\bullet 6=0.$ Hence $2\bullet 4=2.$ But then we get $2\bullet 5=0.$ This is a contradiction.
 \end{example}
 \begin{example}
 $G490$ is not a zero-divisor graph although it satisfies the necessary condition $\star.$
 $G490$ can be defined by $N(1)=\{2\},$ $N(2)=\{1,3,5,7\},$ $N(3)=\{2,4\},$  $N(4)=\{3,5,7\},$ $N(5)=\{2,4,6,7\},$ $N(6)=\{5\},$ $N(7)=\{2,4,5\}.$  It is easy to check that $G490$ satisfies the necessary condition $\star.$
 
 We have the following,
 \[N(2)\cup N(4)=N(2)\cup N(6)=\{1,3,5,7\}=N(2)\]
\[N(1)\cup N(5)=N(3)\cup N(5)=\{2,4,6,7\}=N(5)\]
\[N(1)\cup N(4)=\{2,3,5,7\}=\overline{N(2)}\]
 
 \begin{center}
\begin{tabular}{|l|c|r|r|r|r|r|r|}
\hline
$\bullet$&$1$&$2$&$3$&$4$&$5$&$6$&$7$  \\ \hline
$1$&&0&&$2$&5&&  \\ \hline

$2$&0&0&$0$&2&0&2&0  \\ \hline
$3$&&$0$&&$0$&$5$&&  \\ \hline
$4$&$2$&2&$0$&&0&&0  \\ \hline
$5$&5&0&$5$&0&&0&0  \\ \hline
$6$&&2&&&0&&  \\ \hline
$7$&&0&&0&0&&  \\ \hline

\end{tabular}
\end{center}
 Checking the $4-$ table, we get $4\bullet 4=4$ and $4\bullet 6=4.$  Checking the $6-$ table, we get $1\bullet 6=2.$  It follows that $N(6\bullet 7)\supseteq \{1,2,4,5\}.$  Hence $6\bullet 7$ is undefined.
 \end{example}

\begin{example} \label{eG504}Let $H=G504$ be graph  with seven vertices $a,b,c,x,y,z, w$.  Let $N(a)=N(b)=\{x,z\},$  $N(c)=\{x,y\},$  $N(x)=\{a,b,c,z,w\}$, $N(y)=\{c,z\},$ $N(z)=\{a,b,x,y\}.$ $N(w)=\{x\}.$ 

Notice $|N(x)-\{w\}|=4.$  If we remove $w$ from $H$, we get a  graph~(\ref{type2}) in Theorem~\ref{DGSW}  which is not a zero-divisor graph with six vertices. Hence by Lemma~\ref{emanating},$H$ is not a zero-divisor graph.
\end{example} 
\begin{example}\label{first graph, page 99}The connected graph $G600$  with $V(G)=\{a,b, c,x, y,z,w\}$ and $E(G)$ defined by $N(a)=\{x,y,z\},\, N(b)=\{y,z\}, N(c)=\{x\}, N(x)=\{a,c,y,z\},N(y)=\{a,b,x,z,w\}, N(z)=\{a,b,x,y\}, N(w)=\{y\}$ is not a zero-divisor graph. 
Since \[N(b)\cup N(x)=N(w)\cup N(x)=\{a,c,y,z\}=N(x),\] and \[N(c)\cup N(y)=N(y)\]
we have the following multiplication table.
\begin{center}
\begin{tabular}{|l|c|r|r|r|r|r|r|}
\hline
$\bullet$&$a$&$b$&$c$&$x$&$y$&$z$&$w$  \\ \hline
$a$&&&&0&0&0&  \\ \hline
$b$&&&&$x$&0&0&  \\ \hline

$c$&&&&0&$y$&&  \\ \hline
$x$&0&$x$&$0$&&0&0&$x$  \\ \hline
$y$&$0$&$0$&$y$&$0$&&$0$&0  \\ \hline
$z$&0&0&&0&$0$&& \\ \hline
$w$&&&$x$&$0$&&&\\ \hline
\end{tabular}
\end{center}

Claim: $cz=z.$

Since \[N(c)\cup N(z)=\{a,b, x,y\}\subset N(z)\cap \overline{N(y)}\]
we have $cz=y$ or $cz=z.$

Let's check the $z-$ table.
If $cz=y,$ then $N(zw)\supset \{a,b,c,x,y\}.$   It follows that we can not define $zw.$  Hence $cz=z.$
\vskip .5cm
Claim: $cw=z,$ $ac=y, bc=y, $ $y^2=0.$

Since \[N(c)\cup N(w)\subset\overline{N(a)}\cap \overline{N(x)}\cap \overline{N(y)}\cap \overline{N(z)}\]
we have $cw\in \{a,x,y,z\}.$

Let's check the $c-$ table. If $cw=a,$ then $N(cw)\supset \{x,y,z\}.$ Hence $N(z)\supset \{a,b,x,y,w\}.$ Contradiction!
If $cw=x,$ then  $N(cw)\supset \{a,c,y,z\}\cup \{x,y\}=\{a,c,x,y,z\}.$ Hence $N(z)\supset \{a,b,x,y,w\}.$ Contradiction!
If $cw=y,$ then $N(cw)\supset \{a,b,x,y,z,w\}$  Hence $N(z)\supset \{a,b,x,y,w\}.$  Contradiction!
Hence $cw=z.$  It follows that $N(ac)\supset \{x,y,z,w\}.$ Hence $ac=y,\, y^2=0.$  Similarly $bc=y.$  
\vskip .5cm
Let's Check $a-$ table. Notice $N(ac)\supset \{c,x,y,z\}$ and $N(aw)\supset\{c,x,y,z\}.$  It follows that  $ab=x,\, aw=x,$ and $a^2=0.$  
\vskip .5cm
Let's check the  $b-$ table. Notice $N(b^2)\supset \{c,y,z\}$ and $N(bw)\supset\{c,y,z\}.$ Hence $b^2=x$ and $bw=x.$  But $b^2a=x\ne 0=xa$ and $bwa=x\ne 0=xa.$  It follows that  $b^2$ and $bw$
are undefined.

This argument shows that the above graph is not a zero-divisor graph.
\end{example}
\begin{example}
$G602$ is not a zero-divisor graph.

$G602$ can be defined by $N(1)=\{2\},$ $N(2)=\{1,3,5,7\},$ $N(3)=\{2,4,5,7\},$ $N(4)=\{3\},$ $N(5)=\{2,3,6,7\},$ $N(6)=\{5,7\},$ $N(7)=\{2,3,5,6\}.$
It is easy to check that $G602$ satisfies the $\star$ condition.

We have the following table.
\begin{center}
\begin{tabular}{|l|c|r|r|r|r|r|r|}
\hline
$\bullet$&$1$&$2$&$3$&$4$&$5$&$6$&$7$  \\ \hline
$1$&&$0$&$3$&&\{5,7\}&&$\{5,7\}$  \\ \hline

$2$&$0$&&$0$&2&0&2&$0$  \\ \hline
$3$&3&$0$&&$0$&$0$&&0  \\ \hline
$4$&&$2$&$0$&&\{5,7\}&&\{5,7\}  \\ \hline
$5$&$\{5,7\}$&$0$&$0$&$\{5,7\}$&&$0$&0  \\ \hline
$6$&&2&&&$0$&&$0$ \\ \hline
$7$&$\{5,7\}$&$0$&$0$&$\{5,7\}$&$0$&$0$&\\ \hline
\end{tabular}
\end{center}
Checking the $4-$table, we get $4\times 6=2$ and $2\times 2=2.$  Then we have $N(1\times 4)\supseteq \{2,3,6\}.$  Hence $1\times 4=5$ or $1\times 4=7.$
But $1\times 4\ne 5$ since $(1\times 4)\times 7\ne 0.$  Similarly $1\time 4\ne 7$ since $(1\times 4)\times 5\ne 0.$
\end{example}
\begin{example}
$G607$ is not a zero-divisor graph. 

$G607$ can be defined by $N(1)=\{2\},$ $N(2)=\{1,3,5,6\},$ $N(3)=\{2,4,6\},$ $N(4)=\{3,5,6\},$ $N(5)=\{2,4,6,7\},$ $N(6)=\{2,3,4,5\},$ $N(7)=\{5\}.$ 
We have the following table.
\begin{center}
\begin{tabular}{|l|c|r|r|r|r|r|r|}
\hline
$\bullet$&$1$&$2$&$3$&$4$&$5$&$6$&$7$  \\ \hline
$1$&&$0$&&&5&6&$6$  \\ \hline

$2$&$0$&&$0$&2&0&0&$2$  \\ \hline
$3$&&$0$&&$0$&$5$&0&  \\ \hline
$4$&&$2$&$0$&&0&0&  \\ \hline
$5$&$5$&$0$&$5$&$0$&&$0$&0  \\ \hline
$6$&6&0&0&0&$0$&&$6$ \\ \hline
$7$&$6$&$2$&&&$0$&$6$&\\ \hline
\end{tabular}
\end{center}
Checking $1-$table, we get $6\times 6=6.$  It follows that $1\times 4=5$ and $5\times 5=0.$  But then we can't define $1\times 3.$
\end{example}
\begin{example}\label{eG617} Let $H=G617$ be graph  with seven vertices $a,b,c,x,y,z, w$.  Let $N(a)=\{x,y\},$ $N(b)=\{x,z\},$  $N(c)=\{y,z\},$  $N(x)=\{a,c,y,z,w\}$, $N(y)=\{a,b,x,z\},$ $N(z)=\{a,c,x,y\}.$ $N(w)=\{x\}.$ 

Notice $|N(x)-\{w\}|=4.$  If we remove $w$ from $H$, we get a  graph~(\ref{type3}) in Theorem~\ref{DGSW} that is not a zero-divisor graph which is proved in \cite{DGSW}. Hence by Lemma~\ref{emanating},$H$ is not a zero-divisor graph.
\end{example} 
\begin{example}
$G627$ is not a zero-divisor graph.  $G627$ can be defined by $N(1)=\{2\},$ $N(2)=\{1,3,5,7\},$ $N(3)=\{2,4,5\},$ $N(4)=\{3,5\},$ $N(5)=\{2,3,4,6,7\},$ $N(6)=\{5,7\},$ $N(7)=\{2,5,6\}.$ It is easy to check that $G627$ satisfies the $\star$ condition.

We have the following table.
\begin{center}
\begin{tabular}{|l|c|r|r|r|r|r|r|}
\hline
$\bullet$&$1$&$2$&$3$&$4$&$5$&$6$&$7$  \\ \hline
$1$&&$0$&\{3,5\}&\{2,3,5\}&5&\{2,5,7\}&\{5,7\}  \\ \hline

$2$&$0$&&$0$&2&0&2&$0$  \\ \hline
$3$&\{3,5\}&$0$&&$0$&$0$&5&5  \\ \hline
$4$&\{2,3,5\}&$2$&$0$&&0&\{2,5\}&5  \\ \hline
$5$&$5$&$0$&$0$&$0$&0&$0$&0  \\ \hline
$6$&\{2,5,7\}&2&5&\{2,5\}&$0$&&0 \\ \hline
$7$&\{5,7\}&$0$&5&5&$0$&$0$&\\ \hline
\end{tabular}
\end{center}
Checking $3-$table, we get $1\times 3=3.$
Checking  $6-$table, we get $4\times 6=2,$ $6\times 6=2,$ and $2\times 2=2.$ But then we can't define $1\times 6.$
\end{example}
\begin{example}\label{eG635}The connected graph $G635$  with $V(G)=\{a,b, c,d, e,f,g\}$ and $E(G)$ defined by $N(a)=\{b,d,g\},\, N(b)=\{a,c,e,g\}, N(c)=\{b,g\}, N(d)=\{a,e\},N(e)=\{b,d,f,g\}, N(f)=\{e\}, N(g)=\{a,b,c,e\}$ is not a zero-divisor graph. If we remove $g,$ we get graph~(\ref{type1}) in Theorem~\ref{DGSW}.

Notice \[N(a)\cup N(e)=N(c)\cup N(e)=\{b,d,f,g\}=N(e)\]
and \[N(a)\cup N(f)=\{b,d,e,g\}\subset \overline{N(e)}\]
we have the following multiplication table.
\begin{center}
\begin{tabular}{|l|c|r|r|r|r|r|r|}
\hline
$\bullet$&$a$&$b$&$c$&$d$&$e$&$f$&$g$  \\ \hline
$a$&&$0$&&$0$&$e$&$e$&0  \\ \hline

$b$&$0$&&$0$&&0&&0  \\ \hline
$c$&&$0$&&&$e$&&$0$  \\ \hline
$d$&$0$&&&&$0$&&  \\ \hline
$e$&$e$&0&$e$&$0$&0&0&0  \\ \hline
$f$&$e$&&&&0&&  \\ \hline
$g$&$0$&0&$0$&&0&&  \\ \hline

\end{tabular}
\end{center}
By checking $a$-table, we get $ac=a^2=a.$ It follows that by checking $c-$ table, we get $cf=e.$
Since $N(c)\cup N(d)=\{a,b,e,g\}$ is a subset of $\overline{N(b)}$ or a subset of $\overline{N(g)}$, we have $cd=b$ or $cd=g.$  

If $cd=g,$ then $fg=f(cd)=(fc)d=ed=0.$ This is a contradiction.

If $cd=b,$ then $fb=f(cd)=(fc)d=ed=0.$  Contradiction.

\end{example}
\begin{example}\label{eG669}The connected graph $G669$  with $V(G)=\{a,b, c,d, e,f,g\}$ and $E(G)$ defined by $N(a)=\{b,c,d,e,f\},\, N(b)=\{a, d,e,f,g\}, N(c)=\{a,g\}, N(d)=\{a,b\},N(e)=\{a,b\}, N(f)=\{a,b\}, N(g)=\{b,c\}$ is not a zero-divisor graph. If we remove $f,$ we get graph~(\ref{type2}) in Theorem~\ref{DGSW}.

We have the following multiplication table.
\begin{center}
\begin{tabular}{|l|c|r|r|r|r|r|r|}
\hline
$\bullet$&$a$&$b$&$c$&$d$&$e$&$f$&$g$  \\ \hline
$a$&&0&0&0&0&0&  \\ \hline

$b$&0&&&0&0&0&0  \\ \hline
c&0&&&&&&0  \\ \hline

$d$&0&0&&&&&  \\ \hline
$e$&0&0&&&&&  \\ \hline
$f$&0&0&&&&&\\  \hline
$g$&&0&0&&&&  \\ \hline

\end{tabular}
\end{center}
Since $N(c)\cup N(d)=\{a,b,g\}\subset \overline{N(b)}.$  Hence $b^2=0.$
Since $N(d)\cup N(g)=\{a,b,c\}\subset \overline{N(a)}.$  Hence $a^2=0$ and we have a updated multiplication Table.
\begin{center}
\begin{tabular}{|l|c|r|r|r|r|r|r|}
\hline
$\bullet$&$a$&$b$&$c$&$d$&$e$&$f$&$g$  \\ \hline
$a$&0&0&0&0&0&0&  \\ \hline

$b$&0&0&&0&0&0&0  \\ \hline
c&0&&&&&&0  \\ \hline

$d$&0&0&&&&&  \\ \hline
$e$&0&0&&&&&  \\ \hline
$f$&0&0&&&&&\\  \hline
$g$&&0&0&&&&  \\ \hline
\end{tabular}
\end{center}

Since $N(b)\cup N(c)=\{a, d,e, f,g\},$   we get
 $bc=b.$ We get the following updated multiplication Table.
\begin{center}
\begin{tabular}{|l|c|r|r|r|r|r|r|}
\hline
$\bullet$&$a$&$b$&$c$&$d$&$e$&$f$&$g$  \\ \hline
$a$&0&0&0&0&0&0&  \\ \hline

$b$&0&0&b&0&0&0&0  \\ \hline
c&0&b&&&&&0  \\ \hline

$d$&0&0&&&&&  \\ \hline
$e$&0&0&&&&&  \\ \hline
$f$&0&0&&&&&\\  \hline
$g$&&0&0&&&&  \\ \hline
\end{tabular}
\end{center}
Since $N(a)\cup N(g)=\{b,c,d,e,f\},$ we have
$ag=a.$  We have the following updated Multiplication Table.
\begin{center}
\begin{tabular}{|l|c|r|r|r|r|r|r|}
\hline
$\bullet$&$a$&$b$&$c$&$d$&$e$&$f$&$g$  \\ \hline
$a$&0&0&0&0&0&0&$a$  \\ \hline

$b$&0&0&$b$&0&0&0&0  \\ \hline
c&0&$b$&&&&&0  \\ \hline

$d$&0&0&&&&&  \\ \hline
$e$&0&0&&&&&  \\ \hline
$f$&0&0&&&&&\\  \hline
$g$&$a$&0&0&&&&  \\ \hline
\end{tabular}
\end{center}
Since \[N(c)\cup N(d)=N(c)\cup N(e)=N(c)\cup N(f)=\{a,b,g\}\subset\overline{N(b)}\] we have $cd=ce=cf=b$ and the following multiplication Table.
\begin{center}
\begin{tabular}{|l|c|r|r|r|r|r|r|}
\hline
$\bullet$&$a$&$b$&$c$&$d$&$e$&$f$&$g$  \\ \hline
$a$&0&0&0&0&0&0&$a$  \\ \hline

$b$&0&0&$b$&0&0&0&0  \\ \hline
c&0&$b$&&$b$&$b$&$b$&0  \\ \hline

$d$&0&0&$b$&&&&  \\ \hline
$e$&0&0&$b$&&&&  \\ \hline
$f$&0&0&$b$&&&&\\  \hline
$g$&$a$&0&0&&&&  \\ \hline
\end{tabular}
\end{center}
Since \[N(g)\cup N(d)=N(g)\cup N(e)=N(g)\cup N(f)=\{a,b,c\}\subset \overline{N(a)}\]  Hence $gd=eg=gf=a.$  We have the following multiplication Table.
\begin{center}
\begin{tabular}{|l|c|r|r|r|r|r|r|}
\hline
$\bullet$&$a$&$b$&$c$&$d$&$e$&$f$&$g$  \\ \hline
$a$&0&0&0&0&0&0&$a$  \\ \hline

$b$&0&0&$b$&0&0&0&0  \\ \hline
c&0&$b$&&$b$&$b$&$b$&0  \\ \hline

$d$&0&0&$b$&&&&$a$  \\ \hline
$e$&0&0&$b$&&&&$a$  \\ \hline
$f$&0&0&$b$&&&&$a$\\  \hline
$g$&$a$&0&0&$a$&$a$&$a$&  \\ \hline
\end{tabular}
\end{center}

By checking $g-$ table, we get $g^2=g.$  By checking $c-$ table, we get $c^2=c.$  By checking $d-$ table, we showed that there is no way to define $de$ or $df.$

Therefore $G$ is not a zero-divisor graph.
\end{example}
\begin{example}\label{eG677}The connected graph $G677$  with $V(G)=\{a,b, c,d, e,f,g\}$ and $E(G)$ defined by $N(a)=\{b,d\},\, N(b)=\{a, c,e,f,g\}, N(c)=N(f)=\{b,e\}, N(d)=\{a,e,g\},N(e)=\{b,c,d,f\}, N(g)=\{b,d\}$ is not a zero-divisor graph. If we remove $g,$ we get graph~(\ref{type2}) in Theorem~\ref{DGSW}.

Notice \[N(a)\cup N(c)=N(a)\cup N(f)=\{b,d,e\}\subset \overline{N(e)}\]
\[N(g)\cup N(c)=N(g)\cup N(f)=\{b,d,e\}\subset \overline{N(e)}\]
\[N(a)\cup N(e)=N(g)\cup N(e)=\{b,c,d,f\}=N(e)\]
\[N(b)\cup N(d)=\{a,c,e,f,g\}=N(b)\]
we get the following multiplication table.
\begin{center}
\begin{tabular}{|l|c|r|r|r|r|r|r|}
\hline
$\bullet$&$a$&$b$&$c$&$d$&$e$&$f$&$g$  \\ \hline
$a$&&0&$e$&0&$e$&$e$&$0$  \\ \hline

$b$&0&&0&$b$&0&0&0  \\ \hline
c&$e$&$0$&&&$0$&&$e$  \\ \hline

$d$&0&$b$&&&0&&$0$  \\ \hline
$e$&$e$&0&$0$&0&0&0&$e$  \\ \hline
$f$&$e$&0&&&0&&$e$\\  \hline
$g$&&0&$e$&$0$&$e$&$e$&  \\ \hline
\end{tabular}
\end{center}
By checking $c$-table, we get $cd=cf=b.$  It follows that $b=bd=fcd=fb=0.$ This is a contradiction.
\end{example}
\begin{example}\label{third graph, page 101}The connected graph $G742$  with $V(G)=\{a,b, c,x, y,z,w\}$ and $E(G)$ defined by $N(a)=\{c,x,y,z,w\},\, N(b)=\{x\}, N(c)=\{a,x,y,z\}, N(x)=\{a,b,c,z\},N(y)=\{a,c,z\}, N(z)=\{a,c,x,y\}, N(w)=\{a\}$ is not a zero-divisor graph. 
Since \[N(a)\cup N(b)=\{c,x,y,z,w\}=N(a)\]
and
\[N(x)\cup N(y)=N(x)\cup N(w)=\{a,b,c,z\}=N(x)\]
one gets  $ab=a$ , $xy=xw=x$ and the following multiplication table.
\begin{center}
\begin{tabular}{|l|c|r|r|r|r|r|r|}
\hline
$\bullet$&$a$&$b$&$c$&$x$&$y$&$z$&$w$  \\ \hline
$a$&&$a$&0&0&0&0&$a$  \\ \hline
$b$&$a$&&&0&&&  \\ \hline

$c$&0&&&$0$&$0$&$0$&  \\ \hline
$x$&0&$0$&$0$&&$x$&0&$x$  \\ \hline
$y$&$0$&&$0$&$x$&&$0$&  \\ \hline
$z$&0&&0&$0$&$0$&& \\ \hline
$w$&$0$&&&$x$&&&\\ \hline
\end{tabular}
\end{center}
Since \[N(b)\cup N(y)=\{a,c,x,z\},\] one gets that $by\in \{a,c,z,x\}.$ By checking $y-$ table, one gets $y^2\in \{x,y\}.$

If $y^2=y,$ by checking $y-$ table, one gets $by=x$ and $yw=y.$  By checking $b-$ table, one gets that $bz=a$ and $bw=x.$  Then one can't define $b^2.$

Suppose $y^2=x.$  By checking $y-$ table, we get $x^2=x.$  It follows that $N(by)\supset \overline {N(a)}$ or $N(by)\supset \overline{N(c)}.$  Hence one has $by\in \{a,c\}$ and $yw\in \{x,y\}.$ If $by=a$ and $yw=y,$ then $(wy)b=a\ne 0=w(yb).$ If $by=c$ and $yw=x,$ one gets $cw=byw=bx=0.$  It follows that  $by=a, \, yw=x$ or $by=c,\, yw=y.$ If $by=c,$ then $cw=c.$

Case I: $by=a$ and $yw=x.$

Suppose $by=a$ and $yw=x.$  By checking $b-$ table, one gets $b^2=b$ and $N(bw)\supset\{a,x,y\}.$  It follows that $bw\in \{a,c,z\}$
Suppose $by=a$, $yw=x$, and $bw=a.$  By checking $b-$ table, one gets $w^2=w$ and $N(cw)\supset \{a,b,x,y,w\}.$ Hence one can't define $cw.$
Suppose $by=a$, $yw=x$, and $bw=c.$  By checking $b-$ table, one gets $N(wz)\supset \{a,b,x,y\}.$  Hence one can't define $wz.$
Suppose $by=a$, $yw=x$, and $bw=z.$  By checking $b-$ table, one gets $N(cw)\supset \{a,b,x,y\}.$  Hence one can't define $cw.$

Case II:  $by=c$ and $yw=y.$

We get $cw=c$ and $w^2=w.$  By checking $w-$ table, one gets $N(wz)\supset\{a,c,x,y\}.$  Hence $wz\in \{a,c,z\}.$
If $wz=a,$ then $(zw)w=0\ne z(w^2).$  If $wz=c,$ then $z^2=0.$  By checking $w-$ table and the spectrum of each vertex of $G$, one can't define $bw$. Suppose $wz=z.$ By checking $w-$ table and the spectrum of each vertex of $G,$ then one can't define $bw$.

\end{example}
\begin{example}\label{eG750} Let $H=G750$ be graph  with seven vertices $a,b,c,x,y,z, w$.  Let $N(a)=\{b,x,y,z\},$ $N(b)=\{a,y\},$  $N(c)=\{x,z\},$  $N(x)=\{a,c,y,z,w\}$, $N(y)=\{a,b,x,z\},$ $N(z)=\{a,c,x,y\}.$ $N(w)=\{x\}.$ 

Notice $|N(x)-\{w\}|=4.$  If we remove $w$ from $H$, we get a graph~(\ref{type4}) in Theorem~\ref{DGSW} that is not a zero-divisor graph which is proved in \cite{DGSW}. Hence by Lemma~\ref{emanating},$H$ is not a zero-divisor graph.
\end{example} 
\begin{example}
$G754$ is not a zero-divisor graph.

$G754$ can be defined by $N(1)=\{2\},$ $N(2)=\{1,3,5,6,7\},$ $N(3)=\{2,4,6\},$ $N(4)=\{3,5,6\},$ $N(5)=\{2,4,6\},$ $N(6)=\{2,3,4,5,7\},$ $N(7)=\{2,6\}$.
We have the following table.

\begin{center}
\begin{tabular}{|l|c|r|r|r|r|r|r|}
\hline
$\bullet$&$1$&$2$&$3$&$4$&$5$&$6$&$7$  \\ \hline
$1$&&$0$&&\{2,6\}&&6&\{2,3,5,6,7\}  \\ \hline

$2$&$0$&&$0$&2&0&0&$0$  \\ \hline
$3$&&$0$&&0&&0&  \\ \hline
$4$&\{2,6\}&$2$&$0$&&0&0&\{2,6\} \\ \hline
$5$&&$0$&&$0$&&$0$&  \\ \hline
$6$&6&0&0&0&$0$&&0 \\ \hline
$7$&\{2,3,5,6,7\}&0&&\{2,6\}&&$0$&\\ \hline
\end{tabular}
\end{center}
Let $1\times 4=2.$ Then $2\times 2=0.$ Checking $1-$ table, we get $1\times 1=6. $ Then we can't define $1\times 7.$

Let $1\times 4=6.$ Then $6\times 6=0.$ Checking $4-$ table, we get $4\times 7=2.$  Then we can't define $4\times 4.$
\end{example}
\begin{example}\label{eG766}The connected graph $G766$  with $V(G)=\{a,b, c,x, y,z,w\}$ and $E(G)$ defined by $N(a)=\{c,x,y,z\},\, N(b)=\{x,y,z\}, N(c)=\{a,y\}, N(x)=\{a,b,y,z,w\},N(y)=\{a,b,c,x\}, N(z)=\{a,b,x\}, N(w)=\{x\}$ is not a zero-divisor graph. If we remove $w$ from the graph, we get the  graph~(\ref{graph22}) in Theorem~\ref{B} which is a zero-divisor graph with six vertices.
Since \[N(a)\cup N(b)=N(a)\cup N(w)=\{c,x,y,z\}=N(a)\]
\[N(y)\cup N(z)=N(y)\cup N(w)=\{a,b,c,x\}=N(y)\]
\[N(x)\cup N(c)=\{a,b,y,z,w\}=N(x)\]
we get $ab=aw=a,\, yz=yw=y$ and $xc=x.$
we have the following multiplication table.
\begin{center}
\begin{tabular}{|l|c|r|r|r|r|r|r|}
\hline
$\bullet$&$a$&$b$&$c$&$x$&$y$&$z$&$w$  \\ \hline
$a$&&$a$&0&0&0&0&$a$  \\ \hline
$b$&$a$&&&0&0&0&  \\ \hline

$c$&0&&&$x$&$0$&$y$&  \\ \hline
$x$&0&$0$&$x$&&0&0&$0$  \\ \hline
$y$&$0$&$0$&$0$&$0$&&$y$&$y$  \\ \hline
$z$&0&0&&$0$&$y$&& \\ \hline
$w$&$a$&&&$0$&$y$&&\\ \hline
\end{tabular}
\end{center}
Since \[N(b)\cup N(c)=\{a,x,y,z\}\]
and \[N(c)\cup N(z)=\{a,b,x,y\}\]
we get \[bc=\begin{cases} a \quad \text{if} \quad a^2=0\\
x \quad\text{if} \quad x^2=0\\
\end{cases}\]
and
\[cz=\begin{cases} x \quad \text{if} \quad x^2=0\\
y \quad\text{if} \quad y^2=0\\
\end{cases}\]

Case I: $cz=x$ and $bc=a$.  By checking $c-$table, we can't define $c^2.$

Case II: $cz=x$ and $bc=x$. By checking $c-$table, we get $cw=x.$  Then checking $w-$ table, we can't define $w^2.$

Case III: $cz=y$ and $bc=a$.  By checking $c-$ table, we get $c^2=x.$ It follows that we can't define $cw.$

Case IV: $cz=y$ and $bc=x.$ By checking $c-$ table, we can't define $c^2.$ 

This argument shows that the graph is not a zero-divisor graph.

\end{example}
\begin{example} $G772$ is not a zero-divisor graph.
$G772$ can be defined by $N(1)=\{2,4\},$ $N(2)=\{1,3,5,6\},$ $N(3)=\{2,4,5\},$ $N(4)=\{1,3,5,6\},$ $N(5)=\{2,3,4,6\},$ $N(6)=\{2,4,5,7\},$ $N(7)=\{6\}.$

We have the following table.
\begin{center}
\begin{tabular}{|l|c|r|r|r|r|r|r|}
\hline
$\bullet$&$1$&$2$&$3$&$4$&$5$&$6$&$7$  \\ \hline
$1$&&$0$&&0&5&6& \{5,6\} \\ \hline

$2$&$0$&&$0$&\{2,4\}&0&0&$\{2,4\}$  \\ \hline
$3$&&$0$&&0&0&6&\{5,6\}  \\ \hline
$4$&0&$\{2,4\}$&$0$&&0&0&\{2,4\} \\ \hline
$5$&&$0$&$0$&$0$&&$0$&5  \\ \hline
$6$&6&0&6&0&$0$&&0 \\ \hline
$7$&\{5,6\}&$\{2,4\}$&\{5,6\}&\{2,4\}&$5$&$0$&\\ \hline
\end{tabular}
\end{center}
Checking $7-$table, we get $7\times 7=7.$  Let $3\times 7=5.$ Then we can't define $1\times 7.$ Let $3\times 7=6.$ Then we get $3\times 7=0.$ This is a contradiction.
\end{example}
\begin{example}\label{eG793} The connected graph $G793$ which is defined by  $N(a)=\{b,f\},\, N(b)=\{a,c,d,f,g\},$ $N(c)=\{b,d\},$ $N(d)=\{b,c,e,f\},N(e)=\{d,f\}, N(f)=\{a,b,d,e,g\}, N(g)=\{b,f\}$ is not a zero-divisor graph. If we remove $g,$ we get graph~(\ref{type3}) in Theorem~\ref{DGSW}.
Notice \[N(a)\cup N(d)=N(g)\cup N(d)=\{b,c,e,f\}=N(d)\]
\[N(b)\cup N(e)=\{a,c,d,f,g\}=N(b)\]
\[N(c)\cup N(f)=\{a,b,d,e,g\}=N(f)\]
We have the following multiplication table.
\begin{center}
\begin{tabular}{|l|c|r|r|r|r|r|r|}
\hline
$\bullet$&$a$&$b$&$c$&$d$&$e$&$f$&$g$  \\ \hline
$a$&&$0$&&$d$&&0&  \\ \hline

$b$&$0$&&$0$&$0$&$b$&0&$0$  \\ \hline
$c$&&$0$&&$0$&&$f$&  \\ \hline
$d$&$d$&$0$&$0$&&$0$&0&$d$  \\ \hline
$e$&&$b$&&$0$&&$0$&  \\ \hline
$f$&$0$&0&$f$&0&$0$&&$0$  \\ \hline
$g$&&$0$&&$d$&&$0$&  \\ \hline

\end{tabular}
\end{center}
Notice \[N(a)\cup N(c)=N(a)\cup N(e)=\{b,d,f\}=N(c)\cup N(e)\subset \overline{N(b)}\cap \overline{N(d)}\cap \overline{N(f)}\]
Hence $ac\in \{b,d,f\}$ , $ae\in \{b,d,f\}$ and $ce\in \{b,d,f\}.$

If $ac=b,$ then $b^2=0.$
Checking $a-$ table, we get that if $ac=b,$ then \\$ce\notin \{b,d,f\}.$

If $ac=d,$ then $d^2=0.$ Checking $c-$ table, we get $ce\notin \{b,d\}.$  If $ce=f,$ then $f^2=0$ and $N(c^2)\supset\{a,b,d\}.$  It follows that $c^2=b$ or $c^2=f.$  Checking the spectrum of $b$ and $f,$ we have $c^2\not= b$ or $c^2\not= f.$

If $ac=f$ and $ce=b,$ then by checking $c-$ table, we get $N(c^2)\supset \{b,d,e\}.$  Hence $c^2\in \{d,f\}.$ Checking the spectrum of $d$ and $f$, we get $c^2\notin \{d,f\}.$

If $ac=f,$ then by checking $c-$ table, we get $ce\not=d.$

If $ce=f,$ then by checking $e-$ table, we get $N(ae)\supset \{b,c,d,f\}.$ Hence $ae\in \{b,d\}.$  If $ae=b$ or $ae=d,$ by checking $e-$ table, we have that $e^2$ is undefined. 
\end{example}
\begin{example} $G799$ is not a zero-divisor graph.  $G799$ can be defined by $N(1)=\{2,3,6\},$ $N(2)=\{1,3,6\},$ $N(3)=\{1,2,4,6,7\},$ $N(4)=\{3,5\},$ $N(5)=\{4,6\},$ $N(6)=\{1,2,3,5,7\},$ $N(7)=\{3,6\}.$
We have the following table.
\begin{center}
\begin{tabular}{|l|c|r|r|r|r|r|r|}
\hline
$\bullet$&$1$&$2$&$3$&$4$&$5$&$6$&$7$  \\ \hline
$1$&&$0$&0&6&3&0& \{1,2,3,6\} \\ \hline

$2$&$0$&&$0$&6&3&0&  \\ \hline
$3$&0&$0$&0&0&3&0&0  \\ \hline
$4$&6&$6$&$0$&&0&6&6 \\ \hline
$5$&3&$3$&$3$&$0$&&$0$&3  \\ \hline
$6$&0&0&0&6&$0$&0&0 \\ \hline
$7$&\{1,2,3,6\}&&0&6&$3$&$0$&\\ \hline
\end{tabular}
\end{center}
Checking $1-$table, we can't define $1\times 7.$ 
\end{example}
\begin{example} $G803$ is not a zero-divisor graph. $G803$ can be defined by $N(1)=\{2,3\},$ $N(2)=\{1,3,5,6,7\},$ $N(3)=\{1,2,4,6\},$ $N(4)=\{3,5\},$ $N(5)=\{2,4,6\},$ $N(6)=\{2,3,5,7\},$ $N(7)=\{2,6\}.$  We have the following table.
\begin{center}
\begin{tabular}{|l|c|r|r|r|r|r|r|}
\hline
$\bullet$&$1$&$2$&$3$&$4$&$5$&$6$&$7$  \\ \hline
$1$&&$0$&0&&3&&  \\ \hline

$2$&$0$&&$0$&2&0&0&$0$  \\ \hline
$3$&0&$0$&0&0&3&0&3  \\ \hline
$4$&&$4$&$0$&&0&& \\ \hline
$5$&$3$&$0$&$3$&$0$&&$0$&  \\ \hline
$6$&&0&0&&$0$&&0 \\ \hline
$7$&&$0$&3&&&$0$&\\ \hline
\end{tabular}
\end{center}
Checking $5-$ table, we get $5\times 5=5$ and $5\times 7=5.$  Checking $7-$ table, we get $1\times 7=3.$ It follows $4\times 7=2$ and $2\times 2=0.$  But then $0=2\times 2=2\times 4\times 7=4\times 7=2.$ This is a contradiction.
\end{example}
\begin{example} $G808$ is not a zero-divisor graph.
$G808$ can be defined by  $N(1)=\{2,4,5,6,7\},$ $N(2)=\{1,3\},$ $N(3)=\{2,5,6\},$ $N(4)=\{1,5\},$ $N(5)=\{1,3,4,6\},$ $N(6)=\{1,3,5,7\},$ $N(7)=\{1,6\}.$ We have the following table.
\begin{center}
\begin{tabular}{|l|c|r|r|r|r|r|r|}
\hline
$\bullet$&$1$&$2$&$3$&$4$&$5$&$6$&$7$  \\ \hline
$1$&&$0$&1&0&0&0&0  \\ \hline

$2$&$0$&&$0$&&5&6&  \\ \hline
$3$&3&$0$&&1&0&0&1  \\ \hline
$4$&0&&$1$&&0&6& \\ \hline
$5$&$0$&$5$&$0$&$0$&&$0$&5  \\ \hline
$6$&0&6&0&6&$0$&&0 \\ \hline
$7$&0&&1&&5&$0$&\\ \hline
\end{tabular}
\end{center}
Checking $2-$table, we get $2\times 2=2.$ It follows that $2\times 4=6$ and $2\times 7=5.$ Checking  $7-$table,  we get $N(4\times 7)\supseteq \{1,2,3,5,6\}.$  Hence we can't define $4\times 7$.
\end{example}
\begin{example}\label{eG893} $G893$ is not a zero-divisor graph. $G893$ can be defined by $N(1)=\{2\},$ $N(2)=\{1,3,4,6,7\},$ $N(3)=\{2,4,6,7\},$ $N(4)=\{2,3,5,7\},$ $N(5)=\{4,6\},$ $N(6)=\{2,3,5,7\},$ $N(7)=\{2,3,4,6\}.$ We have the following table.
\begin{center}
\begin{tabular}{|l|c|r|r|r|r|r|r|}
\hline
$\bullet$&$1$&$2$&$3$&$4$&$5$&$6$&$7$  \\ \hline
$1$&&$0$&&\{4,6\}&\{2,3,7\}&\{4,6\}&  \\ \hline
$2$&0&&0&0&2&0&0  \\ \hline

$3$&&$0$&&0&&0&0  \\ \hline
$4$&\{4,6\}&0&$0$&&0&\{4,6\}&0 \\ \hline
$5$&\{2,3,7\}&$2$&&0&&0&  \\ \hline

$6$&\{4,6\}&0&0&\{4,6\}&$0$&&0 \\ \hline
$7$&&0&0&0&&$0$&\\ \hline
\end{tabular}
\end{center}
Checking $1$-table.  If $1\times 5=2$ or $1\times 5=3,$ then $N(1\times 7)\supseteq \{2,3,4,5,6\}.$  It follows that we can't define $1\times 7.$  Hence $1\times 5=7.$  But then $N(1\times 3)\supseteq \{2,4,5,6,7\}.$ Hence we can't define $1\times 3.$
\end{example}
\begin{example} $G899$ is not a zero-divisor graph.
$G899$ can be defined by $N(1)=\{2,3,5\},$ $N(2)=\{1,3,6\},$ $N(3)=\{1,2,4,5,6\},$ $N(4)=\{3,5,6\},$ $N(5)=\{1,3,4,6\},$ $N(6)=\{2,3,4,5,7\},$ $N(7)=\{6\}.$ We have the following table.
\begin{center}
\begin{tabular}{|l|c|r|r|r|r|r|r|}
\hline
$\bullet$&$1$&$2$&$3$&$4$&$5$&$6$&$7$  \\ \hline
$1$&&$0$&0&&0&6&  \\ \hline
$2$&0&&0&&&0&  \\ \hline

$3$&0&$0$&&0&0&0&3  \\ \hline
$4$&&&$0$&&0&0& \\ \hline
$5$&0&&0&0&&0&  \\ \hline

$6$&6&0&0&0&$0$&&0 \\ \hline
$7$&&&3&&&$0$&\\ \hline
\end{tabular}
\end{center}
We know that $1\times 7=3$ or $1\times 7=6.$

We have $1\times 4=3$ or $1\times 4=6.$ If $1\times 4=3,$ then $3\times 3=0.$   But then we get $0=(1\times 7)\times 4=3.$  This is a contradiction. Hence we can't define $1\times 7.$

If $1\times 4=6,$ then $6\times 6=0.$ Hence $1\times 1=1.$  Let $1\times 7=3.$ Then it follows that $1\times 7=0.$ This is a contradiction. Hence $1\times 7=6.$

Checking $7-$table, we get $7\times 7=3.$ But then $N(4\times 7)\supseteq \{1,3,6,7\}.$  Hence we can't define $4\times 7.$
\end{example}
\begin{example}\label{eG907}The connected graph $G907$  with $V(G)=\{a,b, c,x, y,z,w\}$ and $E(G)$ defined by $N(a)=\{c,x,y,z,w\},$ $N(b)=\{c,x,y,z\},\,  N(c)=\{a,b,x,y\}, N(x)=\{a,b,c,z\},N(y)=\{a,b,c\},\, N(z)=\{a,b,x\},\, N(w)=\{a\}$ is not a zero-divisor graph. 
\[N(a)\cup N(b)=\{c,x,y,z,w\}\]
\[N(b)\cup N(w)=\{a,c,x,y,z\}\]
\[N(c)\cup N(z)=N(c)\cup N(w)=\{a,b,x,y\}\]
\[N(x)\cup N(y)=N(c)\cup N(w)=\{a,b,c,z\}\]
we have the following multiplication table.
\begin{center}
\begin{tabular}{|l|c|r|r|r|r|r|r|}
\hline
$\bullet$&$a$&$b$&$c$&$x$&$y$&$z$&$w$  \\ \hline
$a$&0&$a$&0&0&0&0&0  \\ \hline
$b$&$a$&&0&0&0&0&$a$  \\ \hline

$c$&0&0&&$0$&$0$&$c$&$c$  \\ \hline
$x$&0&$0$&0&&$x$&$0$&$x$  \\ \hline
$y$&$0$&0&$0$&$x$&&&  \\ \hline
$z$&0&0&$c$&$0$&&& \\ \hline
$w$&0&$a$&$c$&$x$&&&\\ \hline
\end{tabular}
\end{center}
Checking the $w-$ table, we get that $w^2$ is undefined.
\end{example}
\begin{example} $G917$ is  not a zero-divisor graph.
$G917$ can be defined by $N(1)=\{2,7\},$ $N(2)=\{1,3,5,7\},$ $N(3)=\{2,4,5,7\},$ $N(4)=\{3,5\},$ $N(5)=\{2,3,4,6,7\},$ $N(6)=\{5,7\},$ $N(7)=\{1,2,3,5,6\}.$
We have the following table.
\begin{center}
\begin{tabular}{|l|c|r|r|r|r|r|r|}
\hline
$\bullet$&$1$&$2$&$3$&$4$&$5$&$6$&$7$  \\ \hline
$1$&&$0$&&&5&&0  \\ \hline
$2$&0&&0&\{2,7\}&0&\{2,7\}&0  \\ \hline

$3$&&$0$&&0&0&&0  \\ \hline
$4$&&\{2,7\}&0&&0&&4 \\ \hline
$5$&5&0&0&0&&0&0  \\ \hline

$6$&&\{2,7\}&&&$0$&&0 \\ \hline
$7$&0&0&0&4&0&$0$&\\ \hline
\end{tabular}
\end{center}
Suppose $2\times 4=7.$ Checking $2-$table, we get $N(2\times 6)\supseteq \{1,3,4,5,7\}.$  Hence we can't define $2\times 6.$  It follows that $2\times 4=2.$ Checking $4-$table, we have $0=2\times 7=2\times 4\times 7=2\times 4=2.$ This is a contradiction.

\end{example}
\begin{example}\label{eG918} The connected graph $G918$ which is defined by $N(a)=\{b,f\},\, N(b)=\{a,c,e,f,g\},$ $N(c)=\{b,d,e,f\},$ $N(d)=\{c,e\},N(e)=\{b,c,d,f\},N(f)=\{a,b,c,e,g\} , N(g)=\{b,f\}$ is not a zero-divisor graph. If we remove $g,$ we get graph~(\ref{type4}) in Theorem~\ref{DGSW}.

We claim that if $a^2=g,$ then $g^2\not=a.$

Suppose $a^2=g$ and $g^2=a.$   Then $(ag)^2=agag=a^2g^2=ga=ag.$  It follows that $a(ag)=g^2=a$.

Notice $N(a)\cup N(g)=\{b,f\}\subset N(a)\cap N(g)\cap \overline{N(b)}\cap  N(c) \cap N(e) \cap \overline{N(f)}.$ 

If $ag=a,$ then $a(ag)=a^2=g$.  This is a contradiction.   If $ag=g,$ then $ag=a(ag)=a.$ Another contradiction. 
Hence  $ag\not=a$ or $ag\not=g.$

If $ag=b,$  then $a=a(ag)=ab=0.$  This is a contradiction.

Since $N(a)\cup N(c)=\{b,d,e,f\}\subset N(c)\cup \overline{N(e)}$, $ac\in \{c,e\}.$   It follows that $ac\not= a.$  Hence $ag\not= c.$ 

Since $N(a)\cup N(e)=\{b,c,d,f\}\subset \overline{N(c)}\cap N(e)$, $ae\in \{c,e\}.$  It follows that $ae\not=a.$ Hence $ag\not=e.$

If $ag=f,$ then $a=a(ag)=af=0.$  Contradiction. 

 It is easy to check that $N(x)\cup N(y)$ is not a subset of $\overline{N(a)}$ or $\overline{N(g)}$ unless $x=a,\, y=g$ or $x=g,\, y=a.$  
 
 Suppose $G$ is a zero-divisor graph, then by the above argument, we get the graph formed by removing $a$ or $g$ form $G$ is a zero-divisor graph.  But this is a contradiction to the fact showed in \cite{DGSW}.

\end{example}
\begin{example}
$G928$ is not a zero-divisor graph. $G928$ can be defined by $N(1)=\{2,5\},$ $N(2)=\{1,3,5,6,7\},$ $N(3)=\{2,4,6\},$ $N(4)=\{3,5\},$ $N(5)=\{1,2,4,6,7\}$,$N(6)=\{2,3,5,7\},$ $N(7)=\{2,5,6\}.$ We have the following table.
\begin{center}
\begin{tabular}{|l|c|r|r|r|r|r|r|}
\hline
$\bullet$&$1$&$2$&$3$&$4$&$5$&$6$&$7$  \\ \hline
$1$&&$0$&5&&0&&  \\ \hline
$2$&0&&0&2&0&0&0  \\ \hline

$3$&5&$0$&3&0&5&0&5  \\ \hline
$4$&&2&0&&0&& \\ \hline
$5$&0&0&5&0&0&0&0  \\ \hline

$6$&&0&0&&0&&0 \\ \hline
$7$&&0&5&&0&$0$&\\ \hline
\end{tabular}
\end{center}
We know $1\times 6=2$ or $1\times 6=6.$   Suppose $1\times 6=2.$ Checking $6-$table, we get $4\times 6=6.$ Then checking $4-$table, we get $4\times 4=4.$  It follows that we can't define $1\times 4.$

Suppose $1\times 6=6.$ Checking $1-$table, we get $1\times 1=6.$ It follows that $1\times 7=2.$  But then we can't define $1\times 4.$
\end{example}
\begin{example} $G933$ is not a zero-divisor graph.  $G933$ can be defined by $N(1)=\{2,5,6,7\},$ $N(2)=\{1,3,5,6,7\},$ $N(3)=\{2,4\},$ $N(4)=\{3,5\},$ $N(5)=\{1,2,4,6,7\},$ $N(6)=\{1,2,5\},$ $N(7)=\{1,2,5\}.$ We have the following table. 
\begin{center}
\begin{tabular}{|l|c|r|r|r|r|r|r|}
\hline
$\bullet$&$1$&$2$&$3$&$4$&$5$&$6$&$7$  \\ \hline
$1$&&$0$&5&2&0&0&0  \\ \hline
$2$&0&0&0&2&0&0&0  \\ \hline

$3$&5&$0$&&0&5&5&5  \\ \hline
$4$&2&2&0&&0&2&2 \\ \hline
$5$&0&0&5&0&0&0&0  \\ \hline

$6$&0&0&5&2&0&& \\ \hline
$7$&0&0&5&2&0&&\\ \hline
\end{tabular}
\end{center}
Checking $6-$table, we get $N(6\times 7)\supseteq \{1,2,3,4,5\}.$ Hence we can't define $6\times 7.$
\end{example}
\begin{example} $G938$ is not a zero-divisor graph. $G938$ can be defined by $N(1)=\{2,6,7\},$ $N(2)=\{1,3,5,6\},$ $N(3)=\{2,4,6,7\},$ $N(4)=\{3,7\},$ $N(5)=\{2,7\},$ $N(6)=\{1,2,3,7\},$ $N(7)=\{1,3,4,5,6\}.$ We have the following table.
\begin{center}
\begin{tabular}{|l|c|r|r|r|r|r|r|}
\hline
$\bullet$&$1$&$2$&$3$&$4$&$5$&$6$&$7$  \\ \hline
$1$&&$0$&3&&&0&0  \\ \hline
$2$&0&&0&7&0&0&7  \\ \hline

$3$&3&$0$&&0&3&0&0  \\ \hline
$4$&&7&0&&&6&0 \\ \hline
$5$&&0&3&&&6&0  \\ \hline

$6$&0&0&0&6&6&&0 \\ \hline
$7$&0&7&0&0&0&0&0\\ \hline
\end{tabular}
\end{center}
Checking $4-$table, we get $4\times 4=4\times 5=6.$  Then we can't define $1\times 4$ because $N(1\times 4)\supseteq \{2,3,4,5,6,7\}.$

\end{example}
\begin{example}$G953$ is not a zero-divisor graph. $G953$ can be defined by $N(1)=\{2,5\},$ $N(2)=\{1,3,7\},$ $N(3)=\{2,4,5,6,7\},$ $N(4)=\{3,5,6\},$ $N(5)=\{1,3,4,6,7\},$ $N(6)=\{3,4,5\},$ $N(7)=\{2,3,5\}.$ We have the following table.

\begin{center}
\begin{tabular}{|l|c|r|r|r|r|r|r|}
\hline
$\bullet$&$1$&$2$&$3$&$4$&$5$&$6$&$7$  \\ \hline
$1$&&$0$&3&3&0&3&  \\ \hline
$2$&0&&0&5&5&5&0  \\ \hline

$3$&3&$0$&0&0&0&0&0  \\ \hline
$4$&3&5&0&&0&0&3 \\ \hline
$5$&0&5&0&0&0&0&0  \\ \hline

$6$&3&5&0&0&0&&3 \\ \hline
$7$&&0&0&3&0&3&\\ \hline
\end{tabular}
\end{center}
Checking $1-$table, we get $1\times 1=1$ and $1\times 7=3.$  But then $0=3\times 4=1\times 7\times 4=1\times 3=3.$ This is a contradiction.
\end{example}
\begin{example}\label{eG1024}The connected graph $G1024$  with $V(G)=\{a,b, c,x, y,z,w\}$ and $E(G)$ defined by $N(a)=N(b)=\{c,x,y,z\},\,  N(c)=N(z)=\{a,b,x,y\}, N(x)=\{a,b,c,z,w\},N(y)=\{a,b,c,z\}, N(w)=\{x\}$ is not a zero-divisor graph. If we remove $w$ from the graph, we get the  graph~(\ref{graph24}) in Theorem~\ref{B} which is a zero-divisor graph with six vertices.

Since \[N(x)\cup N(y)=\{a,b,c, z,w\}\] and
\[N(y)\cup N(w)=\{a,b,c, x,z\}\]
one gets that $xy=x$, $yw=x$ and $x^2=0.$  Hence we have the following multiplication table.
\begin{center}
\begin{tabular}{|l|c|r|r|r|r|r|r|}
\hline
$\bullet$&$a$&$b$&$c$&$x$&$y$&$z$&$w$  \\ \hline
$a$&&&0&0&0&0&  \\ \hline
$b$&&&0&0&0&0&  \\ \hline

$c$&0&0&&$0$&$0$&&  \\ \hline
$x$&0&$0$&$0$&0&$x$&0&$0$  \\ \hline
$y$&$0$&0&$0$&$x$&&$0$&$x$  \\ \hline
$z$&0&0&&$0$&$0$&& \\ \hline
$w$&&&&$0$&$x$&&\\ \hline
\end{tabular}
\end{center}
Since \[N(a)\cup N(w)=N(b)\cup N(w)=\{c,x,y,z\}\]
and \[N(c)\cup N(w)=N(z)\cup N(w)=\{a,b,x,y\}\]
one gets $aw\in\{a,b\},$ $bw\in \{a,b\},$ $cw\in \{c,z\},$ and $zw\in \{c,z\}.$

By checking $w-$ table and the spectrum of each vertex in $G$, it follows that $w^2$ is undefined.
\end{example}
\begin{example} $G1030$ is not a zero-divisor graph. $G1030$ can be defined by $N(1)=\{2,5,6\},$ $N(2)=\{1,3,4,5,6\},$ $N(3)=\{2,4,5,6\},$ $N(4)=\{2,3\},$ $N(5)=\{1,2,3,6,7\},$ $N(6)=\{1,2,3,5,7\},$ and $N(7)=\{5,6\}.$ We have the following table.
\begin{center}
\begin{tabular}{|l|c|r|r|r|r|r|r|}
\hline
$\bullet$&$1$&$2$&$3$&$4$&$5$&$6$&$7$  \\ \hline
$1$&&$0$&&&0&0&  \\ \hline
$2$&0&&0&0&0&0&2  \\ \hline

$3$&&&0&0&0&0&  \\ \hline
$4$&&0&0&&&& \\ \hline
$5$&0&0&0&&&0&0  \\ \hline

$6$&0&0&0&&0&&0 \\ \hline
$7$&&2&&&0&0&\\ \hline
\end{tabular}
\end{center}
We know that $3\times 7=2$ or $3\times 7=3.$ If $3\times 7=2,$ then $N(1\times 3)\supseteq \{2,4,5,6,7\}.$  Hence we can't define $1\times 3.$  It follows that $3\times 7=3.$  
 
Checking $4-$table, we get $7\times 7=7$ and $4\times 7=2$ or $4\times 7=3.$  We know that $4\times 5\in \{5,6\}$ and $4\times 6=\{5,6\}.$  Checking $4-$table, we get $N(4\times 4\supseteq\{2,3,7\}.$  Hence $4\times 4\in \{0, 5,6\}.$  It follows that we can't define $4\times 4.$ 
\end{example}
\begin{example}
$G1034$ is not a zero divisor graph. $G1034$ can be defined by $N(1)=\{2,3,4,5\},$ $N(2)=\{1,3,4,5\},$ $N(3)=\{1,2,4,6\},$ $N(4)=\{1,2,3,5,7\},$ $N(5)=\{1,2,4,6,7\},$ $N(6)=\{3,5\}$, and $N(7)=\{4,5\}.$  We have the following table.

\begin{center}
\begin{tabular}{|l|c|r|r|r|r|r|r|}
\hline
$\bullet$&$1$&$2$&$3$&$4$&$5$&$6$&$7$  \\ \hline
$1$&&$0$&0&0&0&&  \\ \hline
$2$&0&&0&0&0&&  \\ \hline

$3$&0&0&&0&5&0&5  \\ \hline
$4$&0&0&0&&0&4&0 \\ \hline
$5$&0&0&5&0&0&0&0  \\ \hline

$6$&&&0&4&0&& \\ \hline
$7$&&&5&0&0&&\\ \hline
\end{tabular}
\end{center}
We know that $6\times 7\in\{1,2,4\}.$   Let's Check $7-$table.  If $6\times 7=1$ or $6\times 7=4,$ then we can't define $2\times 7$ since $N(2\times 7)\supseteq \{3,4,5,6\}.$   If $6\times 7=2,$ then $N(1\times 7)\supseteq \{3, 4,5,6\}.$  Hence we can't define $1\times 7.$
\end{example}
\begin{example}
$G1043$ is not a zero-divisor graph.  $G1043$ can be defined by $N(1)=\{2,3\},$ $N(2)=\{1,3,4,5,6\},$ $N(3)=\{1,2,4,5,7\},$ $N(4)=\{2,3,5\},$ $N(5)=\{2,3,4,6,7\},$ $N(6)=\{2,5,7\},$ and $N(7)=\{3,5,6\}.$ We have the following table.
\begin{center}
\begin{tabular}{|l|c|r|r|r|r|r|r|}
\hline
$\bullet$&$1$&$2$&$3$&$4$&$5$&$6$&$7$  \\ \hline
$1$&&$0$&0&&5&&  \\ \hline
$2$&0&&0&0&0&0&2  \\ \hline

$3$&0&0&&0&0&3&0  \\ \hline
$4$&&0&0&&0&& \\ \hline
$5$&5&0&0&0&&0&0  \\ \hline

$6$&&0&3&&0&&0 \\ \hline
$7$&&2&0&&0&0&\\ \hline
\end{tabular}
\end{center}
We know $1\times 7\in \{2,5\}.$ Suppose $1\times 7=2.$ Checking $1-$table, we get $1\times 1=5$ and $1\times 4=3.$  Then we can't define $1\times 6$ since $N(1\times 6)\supseteq \{1,2,3,5,7\}$ and $(1\times 6)\times 4=3.$ Suppose $1\times 7=5.$ Checking $7-$table, we get $7\times 7=2.$ Then we can't define $4\times 7$ since $N(4\times 7)\supseteq\{1,2,3,5,6,7\}.$

\end{example}
\begin{example}\label{eG1044}The connected graph $G1044$  with $V(G)=\{a,b, c,d, e,f,g\}$ and $E(G)$ defined by $N(a)=\{b,d,g\},\, N(b)=\{a, c,e,f,g\}, N(c)=N(f)=\{b,e,g\}, N(d)=\{a,e\},N(e)=\{b,c,d,f,g\}, N(g)=\{a,b,c,e,f\}$ is not a zero-divisor graph. If we remove $g,$ we get graph~(\ref{type2}) in Theorem~\ref{DGSW}.

Notice \[N(a)\cup N(c)=N(a)\cup N(f)=\{b,d,e,g\}\subset \overline{N(e)}\]
and \[N(a)\cup N(e)=\{b,c,d,f,g\}=N(e)\]
we have the following multiplication table.
\begin{center}
\begin{tabular}{|l|c|r|r|r|r|r|r|}
\hline
$\bullet$&$a$&$b$&$c$&$d$&$e$&$f$&$g$  \\ \hline
$a$&&0&$e$&0&$e$&$e$&$0$  \\ \hline

$b$&0&&0&&0&0&0  \\ \hline
c&$e$&$0$&&&$0$&&$0$  \\ \hline

$d$&0&&&&0&&  \\ \hline
$e$&$e$&0&$0$&0&0&0&$0$  \\ \hline
$f$&$e$&0&&&0&&$0$\\  \hline
$g$&0&0&$0$&&$0$&$0$&  \\ \hline
\end{tabular}
\end{center}
Notice $N(c)\cup N(d)=\{a,b,e,g\}$ is a subset of $\overline{N(b)}$ or a subset of $\overline{N(g)}.$ Hence $cd=b$ or $cd=g.$

If  $cd=b,$ then $dcf=bf=0.$  If $cd=g,$ then $dcf=gf=0.$  In either cases, we have $d\in N(cf).$  By checking $c-$ table, we have $N(cf)\supset \{a,b,d,e,g\}.$  Hence $cf$ is undefined.
\end{example}
\begin{example}\label{eG1060}
$G1060$ is not a zero-divisor graph. $G1060$ can be defined by $N(1)=\{2,3,4,5\},$ $N(2)=\{1,3,6\},$ $N(3)=\{1,2,4,6,7\},$ $N(4)=\{1,3,5,6,7\},$ $N(5)=\{1,4,6\},$ $N(6)=\{2,3,4,5\},$ $N(7)=\{3,4\}.$  We have the following table.
\begin{center}
\begin{tabular}{|l|c|r|r|r|r|r|r|}
\hline
$\bullet$&$1$&$2$&$3$&$4$&$5$&$6$&$7$  \\ \hline
$1$&&$0$&0&0&0&&  \\ \hline
$2$&0&&0&4&&0&  \\ \hline

$3$&0&0&&0&3&0&0  \\ \hline
$4$&0&4&0&&0&0&0 \\ \hline
$5$&0&&3&0&&0&  \\ \hline

$6$&&0&0&0&0&& \\ \hline
$7$&&&0&0&&&\\ \hline
\end{tabular}
\end{center}
We know that $2\times 5\in \{3,4\}.$  Suppose $2\times 5=3.$ Checking $7-$table, we get $2\times 7=4.$ Then we can't define $2\times 2$ since $N(2\times 2\supseteq \{1,3,5,6\}$ and $(2\times 2)\times 7=4.$ Suppose $2\times 5=4.$ Checking $5-$table, we get $5\times 5=3.$ Then we can't define $5\times 7$ since $N(5\times 7)\supseteq \{1,2,3,4,5,6\}.$
\end{example}
\begin{example} $G1083$ is not a zero-divisor graph. $G1083$ can be defined by $N(1)=\{2,5,6,7\},$ $N(2)=\{1,3,6\},$ $N(3)=\{2,6,7\},$ $N(4)=\{5,6,7\},$ $N(5)=\{1,4,7\},$ $N(6)=\{1,2,3,4,7\},$ $N(7)=\{1,3,4,5,6\}.$ We have the following table.
\begin{center}
\begin{tabular}{|l|c|r|r|r|r|r|r|}
\hline
$\bullet$&$1$&$2$&$3$&$4$&$5$&$6$&$7$  \\ \hline
$1$&&$0$&1&1&0&0&0  \\ \hline
$2$&0&&0&7&&0&7  \\ \hline

$3$&1&0&&1&6&0&0  \\ \hline
$4$&1&7&1&&0&0&0 \\ \hline
$5$&0&&6&0&&6&0  \\ \hline

$6$&0&0&0&0&6&0&0 \\ \hline
$7$&0&7&0&0&0&0&0\\ \hline
\end{tabular}
\end{center}
We know $2\times 5\in \{6,7\}.$ Suppose $2\times 5=6.$ Checking $2-$ table, we can't define $2\times 2$ since $N(2\times 2)\supseteq\{1,3,5,6\}$ and $(2\times 2)\times 4=7.$  Suppose $2\times 5=7.$ Checking $5-$table, we can't define $5\times 5$ since $N(5\times 5)\supseteq \{1,2,4,7\}$ and $(5\times 5)\times 3=6.$
\end{example}
\begin{example} $G1120$ is not a zero-divisor graph. $G1120$ can be defined by $N(1)=\{2,3,4,5,6\},$ $N(2)=\{1,4,5\},$ $N(3)=\{1,4,6\},$ $N(4)=\{1,2,3,5,6\},$ $N(5)=\{1,2,4,6,7\},$ $N(6)=\{1,3,4,5,7\},$ $N(7)=\{5,6\}.$  We have the following table.
\begin{center}
\begin{tabular}{|l|c|r|r|r|r|r|r|}
\hline
$\bullet$&$1$&$2$&$3$&$4$&$5$&$6$&$7$  \\ \hline
$1$&&$0$&0&0&0&0&  \\ \hline
$2$&0&&&0&0&6&  \\ \hline

$3$&0&&&0&5&0&  \\ \hline
$4$&0&0&0&&0&0& \\ \hline
$5$&0&0&5&0&&0&0  \\ \hline

$6$&0&6&0&0&0&&0 \\ \hline
$7$&&&&&0&0&\\ \hline
\end{tabular}
\end{center}
We know $2\times 3\in \{1,4,5,6\}$. Suppose $2\times 3=1.$  Checking the $2-$table, we get $2\times 2=6.$  It follows that $N(2\times 7)\supseteq \{1,2, 4,5,6\}.$   Hence $2\times 7\in \{1, 4,5\}.$  But $2\times 7\not=1$ otherwise $7\times 1=7\times (2\times 3)=(7\times 2)\times 3=1\times 3=0.$ This is a contradiction. If $2\times 7=4,$ then $7\times 1=7\times (2\times 3)=(7\times 2)\times 3=4\times 3=0.$ Contradiction again.  If $2\times 7=5,$ then $7\times 1=7\times (2\times 3)=(7\times 2)\times 3=5\times 3=5.$ But $1\times 7\not=5$ since $N(1)\cup N(7)=N(1)$ and $N(1)$ is not a subset of $\overline{N(5)}.$

Suppose $2\times 3=4.$ Checking $2-$table, we get $2\times 2=6.$ It follows that $2\times 7=5.$ But then $7\times 4=7\times (2\times 3)=(7\times 2)\times 3=5\times 3=5.$  This is not true because $N(4)\cup N(7)=N(4)$ and $N(4)$ is not a subset of $\overline{N(5)}$.

 Suppose $2\times 3=5.$  Checking $3-$ table, we get $3\times 3=3.$ It follows that $3\times 7=5.$ Checking $7-$ table, one gets $N(7\times 7)\supseteq \{3,5,6\}.$ It follows that $7\times 7\in \{0, 1,4,6\}.$ If $7\times 7=0$ or $7\times 7=1,$ we get $4\times 7=0.$ This is a contradiction. If $7\times 7=4 $ or $7\times 7=6$, we get $N(1\times 7)\supseteq \{2,3,4,5,6,7\}.$ Hence we can't define $1\times 7.$
 
 Suppose $2\times 3=6.$ Checking $2-$table, we get $2\times 2=2.$ It follows that $2\times 7=6.$  Checking $7-$ table, we get $7\times 7\in \{0, 1,4,5\}.$ If $7\times 7\in \{0, 4,5\},$ then we can't define $1\times 7$ since $N(1\times 7)\supseteq \{2,3,4,5,6,7\}.$ If $7\times 7=1,$ then we can't define $4\times 7$ since $N(4\times 7)\supseteq \{1,2,3,5,6,7\}.$  
\end{example}
\begin{example}\label{eG1130} The connected graph $G1130$ which is defined by $N(a)=\{b,d,e,f\},\, N(b)=\{a,c,d,e,g\},$ $N(c)=\{b,d\},$ $N(d)=\{a,b,c,e,g\},N(e)=\{a,b,d,f,g\}, N(f)=\{a,e,g\},N(g)=\{b,d,e,f\} $ is not a zero-divisor graph. If we remove $g,$ we get graph~(\ref{type4}) in Theorem~\ref{DGSW}.

We have $ce=e.$  It follows $c^2\ne a.$  Otherwise $c^2e=ae=0\ne e=c(ce).$ Similarly, $c^2\ne g.$ 
If $c^2=c,$ then $cf=e.$  ($cf\ne b.$ Otherwise $c^2f=cf=b\ne 0=c(cf).$  $cf\ne d.$ Otherwise $c^2f=cf=d\ne 0=c(cf).$)
If $cf=e,$ then $f^2\ne f.$ Otherwise, $cf^2=cf=e\ne 0=(cf)f.$

\begin{center}
\begin{tabular}{|l|c|r|r|r|r|r|r|}
\hline
$\bullet$&$a$&$b$&$c$&$d$&$e$&$f$&$g$  \\ \hline
$a$&&$0$&&$0$&$0$&0&  \\ \hline

$b$&$0$&&$0$&$0$&$0$&&$0$  \\ \hline
$c$&&$0$&&$0$&$e$&&  \\ \hline
$d$&$0$&$0$&$0$&&$0$&&$0$  \\ \hline
$e$&$0$&$0$&$e$&$0$&&$0$&$0$  \\ \hline
$f$&$0$&&&&$0$&&$0$  \\ \hline
$g$&&$0$&&$0$&$0$&$0$&  \\ \hline

\end{tabular}
\end{center}
\begin{center}
\begin{tabular}{|l|c|r|r|r|r|r|r|}
\hline
$c$&$ac$&$0$&$c^2$&$0$&$e$&$cf$&$cg$  \\ \hline
$ac$&&$0$&&$0$&$0$&0&  \\ \hline

$0$&$0$&0&$0$&$0$&$0$&0&$0$  \\ \hline
$c^2$&&$0$&&$0$&$e$&&  \\ \hline
$0$&$0$&$0$&$0$&0&$0$&0&$0$  \\ \hline
$e$&$0$&$0$&$e$&$0$&&$0$&$0$  \\ \hline
$cf$&$0$&0&&0&$0$&&$0$  \\ \hline
$cg$&&$0$&&$0$&$0$&$0$&  \\ \hline

\end{tabular}
\end{center}
We get $c^2=c,$ or $c^2=e.$ 

If $c^2=e,$ then
\begin{center}
\begin{tabular}{|l|c|r|r|r|r|r|r|}
\hline
$c$&$ac$&$0$&$e$&$0$&$e$&$cf$&$cg$  \\ \hline
$ac$&&$0$&0&$0$&$0$&0&  \\ \hline

$0$&$0$&0&$0$&$0$&$0$&0&$0$  \\ \hline
$e$&0&$0$&$e$&$0$&$e$&0&0  \\ \hline
$0$&$0$&$0$&$0$&0&$0$&0&$0$  \\ \hline
$e$&$0$&$0$&$e$&$0$&$e$&$0$&$0$  \\ \hline
$cf$&$0$&0&0&0&$0$&&$0$  \\ \hline
$cg$&&$0$&0&$0$&$0$&$0$&  \\ \hline

\end{tabular}
\end{center}
It follows that $N(ac)\supset \{b,c,d,e,f\}.$  Hence we can not define $ac.$  We get $c^2=c.$ It follows $cf=e,\, e^2=0.$
\begin{center}
\begin{tabular}{|l|c|r|r|r|r|r|r|}
\hline
$f$&$0$&$bf$&$e$&$df$&$0$&$f^2$&$0$  \\ \hline
$0$&0&$0$&0&$0$&$0$&0&0  \\ \hline

$bf$&$0$&&$0$&$0$&$0$&&$0$  \\ \hline
$e$&0&$0$&$e$&$0$&$0$&0&0  \\ \hline
$df$&$0$&$0$&$0$&&$0$&&$0$  \\ \hline
$0$&$0$&$0$&$0$&$0$&$0$&$0$&$0$  \\ \hline
$f^2$&$0$&&0&&$0$&&$0$  \\ \hline
$0$&0&$0$&0&$0$&$0$&$0$&0  \\ \hline

\end{tabular}
\end{center}
We know $f^2\in\{0,b,d\}.$ 
If $f^2=0,$ then $N(bf)\supset \{a,c,d,e,f,g\},$  we can't define $bf.$ 
If $f^2=b,$ then $N(df)\supset \{a,b,c,e,f,g\},$ can't define $df.$ If $f^2=d,$ then $N(bf)\supset \{a,c,d,e,f,g\},$  we can't define $bf.$

\end{example} 
\begin{example} $G1146$ is not a zero-divisor graph. $G1146$ can be defined by $N(1)=\{2,3,4,5\},$ $N(2)=\{1,3,7\},$ $N(3)=\{1,2,4,6,7\},$ $N(4)=\{1,3,5,6,7\},$ $N(5)=\{1,4,7\},$ $N(6)=\{3,4,7\},$ $N(7)=\{2,3,4,5,6\}.$ We have the following table.
\begin{center}
\begin{tabular}{|l|c|r|r|r|r|r|r|}
\hline
$\bullet$&$1$&$2$&$3$&$4$&$5$&$6$&$7$  \\ \hline
$1$&&$0$&0&0&0&&7  \\ \hline
$2$&0&&0&4&&&0  \\ \hline

$3$&0&0&&0&3&0&0  \\ \hline
$4$&0&4&0&&0&0&0 \\ \hline
$5$&0&&3&0&&&0  \\ \hline

$6$&&&0&0&&&0 \\ \hline
$7$&7&0&0&0&0&0&0\\ \hline
\end{tabular}
\end{center}
We know $2\times 5\in \{3,4\}.$  Suppose $2\times 5=3.$ Checking $2-$table, we get $2\times 6=4$ since $N(2\times 6)\supseteq \{1,3,4,5,6,7\}.$ Since $(2\times 2)\times 5=2\times (2\times 5)=2\times 3=0$ and $(2\times 2)\times 6=2\times (2\times 6)=2\times 4=4.$ It follows that $2\times 2\not=2$ or $2\times 2\not=4.$ Then we can't define $2\times 2.$ 

Suppose $2\times 5=4.$  Checking $5-$table, we get $5\times 6=3$ since $N(5\times 6)\supseteq \{1,2,3,4,7\}.$  We also get $3\times 3=(5\times 6)\times 3=5\times (6\times 3)=5\times 0=0.$ Since $(5\times 5)\times 3=(5\times 3)\times 5=3\times 5=3$ and $(5\times 5)\times 2=(5\times 2)\times 5=4\times 5=0$, we have $5\times 5\not=3$ or $5\times 5\not=5.$  Hence we can't define $5\times 5.$
\end{example}
\begin{example} $G1177$ is not a zero-divisor graph. $G1177$ can be defined by $N(1)=\{2,3,6\},$ $N(2)=\{1,3,4,5,6\},$ $N(3)=\{1,2,4,5,6\},$ $N(4)=\{2,3,5,6,7\},$ $N(5)=\{2,3,4,6,7\},$ $N(6)=\{1,2,3,4,5\},$ $N(7)=\{4,5\}.$ We know $1\times 4\in \{4,5\},$ $1\times 5\in \{4,5\},$ $2\times 7\in \{2,3,6\},$ $2\times 3\in \{2,3,6\},$ and $6\times 7\{2,3,6\}.$ 

Checking $1-$table, we get $1\times 7\in\{4,5\}$ otherwise we can't define $1\times 1.$

Checking $7-$table, we can't define $7\times 7.$
\end{example}
\section{Non zero-divisor graphs which are either dis-connected or connected but not satisfying the $\star$ condition}
\subsection{Dis-connected graphs are not zero-divisor graphs}
\begin{center}
$G209-G269$,$G275$, $G277$,$G281-G283$,$G285$,$G287-G313$,$G323$,$G330$, $G335$,$G345-G347$,$G352$,$G354-G378$,$G387$,$G397$,$G407$,$G417-G418$,$G451-G472$,$G496$, $G502$,$G582-G597$,$G611$,$G731-G739$,$G745$,$G879-G883$,
$G1010-G1011$,$G1107$, $G1172$.
\end{center}

\subsection{connected but not satisfying the $\star$ condition}
\begin{center}
$G273-G274$,$G276$,$G278-G280$,$G284$,$G286$,$G318$,$G320-G321$,$G324-G329$,$G331-G334$,$G336-G344$,$G348-G351$,$G353$,$G385-G386$, $G389$,$G391$, $G394-G396$,$G398-G404$,$G406$,$G408-G410$,$G412-G416$,$G419-G450$,$G484$,$G487-G489$,$G491-G492$,$G494-G495$, $G497-G501$, $G505-G506$,$G508-G512$,$G514-G521$,$G523-G524$,$G526-G550$,$G552-G581$,$G605$,$G608-G610$,$G615$, $G621-G623$,$G625$,$G628$,$G630$,$G632$, $G634$,$G636-G638$,$G640-G666$, $G673-G676$,$G679-730$,$G744$,$G756$,$G760-G761$,$G763$,$G765$,$G768-771$,$G773-G774$,$G776-779$,$G781-G785$,$G787-G789$,$G797$, $G802$, $G804$,$G806-G807$,$G809-G811$,$G816-G831$,$G833-G871$,$G873-G878$ ,$G892$,$G895$, $G900-G901$,$G903-G905$,$G910-G912$,$G926$,$G931$,$G935-G937$,$G940-G943$,$G945-G947$,$G949$,$G954-G956$,$G958-G971$,$G973-G974$,$G976-G1006$,
$G1021-G1023$,$G1033$,$G1038-G1042$,$G1051$,$G1054-G1055$,$G1058$,$G1061$,$G1063-G1066$,$G1068-G1071$,$G1073-G1076$,$G1082$, $G1084$,$G1086-1087$, $G1089-G1105$,$G1112$,$G1127$, $G1133$,$G1136$,$G1153-G1156$,$G1158-G1162$,$G1164-G1168$,$G1170-G1171$,$G1201$,$G1204$,$G1209$,$G1211-G1212$,
\end{center}

\end{document}